\documentclass[11pt]{article}
\usepackage{amsmath,amsthm,amsfonts,amssymb,amscd, amsxtra, mathrsfs,textcomp,verbatim,inputenc}
\synctex=1
\usepackage{url}
\usepackage{xcolor}
\usepackage[margin=2.5 cm,nohead]{geometry}
\headsep=1cm
\synctex=1
\theoremstyle{plain}
\theoremstyle{definition}
\newtheorem{theorem}{Theorem}
\newtheorem{lemma}{Lemma}
\newtheorem{definition}{Definition}
\newtheorem{corollary}{Corollary}
\newtheorem{proposition}{Proposition}

\newtheorem{remark}{Remark}
\newtheorem{example}{Example}
\DeclareMathOperator{\Hess}{Hess}
\DeclareMathOperator{\grad}{grad}

\newcommand{\N}{\mathbb N}
\newcommand{\lng}{\left\langle}
\newcommand{\rng}{\right\rangle}
\newcommand{\lf}{\left}
\newcommand{\rg}{\right}

\begin{document}
\title{Convexity of sets and quadratic functions on the hyperbolic space}
\author{
 O. P. Ferreira\thanks{IME/UFG, Avenida Esperan\c{c}a, s/n, Campus II,  Goi\^ania, GO - 74690-900, Brazil (E-mail: {\tt orizon@ufg.br}).}
\and
S. Z. N\'emeth \thanks{School of Mathematics, University of Birmingham, Watson Building, Edgbaston, Birmingham - B15 2TT, United Kingdom
(E-mails: {\tt s.nemeth@bham.ac.uk}, {\tt JXZ755@bham.ac.uk}).}
\and
J. Zhu 
\footnotemark[3]
}

\maketitle

\begin{abstract}
In this paper some concepts of convex analysis on hyperbolic spaces are studied.
We first   study   properties of  the  intrinsic distance, for instance, we
present the spectral  decomposition of its Hessian. Next,  we   study the
concept of convex sets and the intrinsic projection onto these sets.  We also
study the concept of convex functions and present first and second order
characterizations of these functions, as well as some optimization concepts
related to them. An  extensive study  of the  hyperbolically convex quadratic
functions is also presented.
\vspace{5mm}

\noindent
{\bf Keywords:}  Hyperbolic space, Convex cone, Convex set, Convex function,  Quadratic function.
\vspace{2mm}

\noindent{\bf AMS subject classification:} \,90C30\,$\cdot$\, 90C26
\end{abstract}
\section{Introduction}
The modeling of various classes of constrained optimization problems via Riemannian manifolds has gained increasing attention in the recent years  in both the academic and business community as,  in some sense,   it outperforms the Euclidean counterpart.  Besides  the purely theoretical motivations and interests, it aims to obtain practical tools for supporting efficient computational implementations of algorithms for solving such problems. A point that deserves to be highlighted is that endowing  the set of constraints of the  problem  with  a suitable Riemannian metric, it  allows us to explore its intrinsics  algebraic and  geometric structures  and then  significantly reduce the cost of obtaining solutions of  the problem in question. For example, a non-convex Euclidean problem can be seen as a convex Riemannian one (as we show in Section~\ref{sec:hcqf}), whose optimization methods for solving it have much less inherent computational complexity, see for instance \cite[Example 13.4.1]{Rapcsak1997} and \cite{Ferreira2019, Ferreira2020}.    It is worth noting that  the concepts of convexity of  sets and functions in the  Riemannian optimization context is a topic that is of interest in itself, see for example  \cite{Kristaly2016, Rapcsak1991, Udriste1994, Zhou2017}.

The hyperbolic space is a non-Euclidean  smooth manifold of negative constant
sectional curvature, see for example  \cite{BenedettiPetronio1992,
Ratcliffe2019}. A concise introductory note on  hyperbolic spaces can be found in
\cite{Cannon1997}. Over the recent years several  theoretical and practical
applications  of the hyperbolic space  have emerged. Although we are not
concerned with practical issues at this time, we emphasize that practical
applications appear whenever the natural structure of the data is modeled as an
optimization problem on the hyperbolic space. For instance,  several problems in
machine learning, artificial intelligence,   financial networks, as well as
procrustes  problems and many other practical questions   can be modeled  in this
setting.  Papers dealing  with these subjects  include, for machine learning
\cite{nickel2018learning}, for artificial intelligence
\cite{muscoloni2017machine}, for neural circuits  \cite{sharpee2019argument},
for low-rank  approximations  of  hyperbolic  embeddings
\cite{jawanpuria2019low,tabaghi2020hyperbolic}, for    procrustes  problems
\cite{Tabaghi2021}, for financial networks \cite{keller2021hyperbolic}, for
complex networks \cite{Kriouko2010, Moshir2021}, for embeddings of data \cite{Wilson2014} and the references therein.   We  also mention that  there are many related papers on  strain analysis, see for example, \cite{vollmer2018automatic,yamaji2008theories}.  
  
The aim of this paper is to study  some concepts related to the  convexity of
sets and functions  on the hyperbolic space in an intrinsic way. Although some of
these concepts have already been studied in general Riemannian manifolds, we
revisit them in this specific context in order to present some explicit  formulas
and new properties.  To this end, among the various existing models of hyperbolic
geometry, we choose the hyperboloid model (also called Lorentz model), see
\cite{BenedettiPetronio1992, Cannon1997}. We first   study  important properties
of  the  intrinsic distance, for instance, we present the spectral  decomposition
of its Hessian. Next,  we   study   the concept of convex sets and the intrinsic
projection onto these sets.  We also study the concept of convex functions and
present the first and second order characterizations  for  these functions.
Finally, we  present an  extensive study  of  the hyperbolically convex quadratic functions.

The structure of this paper is as follows. In Section~\ref{sec:int.01}, we recall
some notations and basic results.  In Section~\ref{sec:int.1}, we recall some
notations, definitions and basic properties  about the geometry of the hyperbolic
space used throughout  the paper. In Section~\ref{sec:pid}  we present some
properties of the intrinsic  distance from a fixed point.  In
Section~\ref{sec:cs} we present a characterization  of convex sets in the
hyperbolic space and in Section~\ref{sec:pj} we study properties of the
projection onto convex sets.  In Section~\ref{sec:cf} we study the basic
properties of convex functions on the hyperbolic space and in
Section~\ref{sec:hcqf} we study hyperbolically convex quadratic functions. In
Section~\ref{sec:fr2} we present some concepts of optimization related to
hyperbolically convex functions.   We  conclude this paper by  making some final remarks in 
Section~\ref{sec:fr}.
\subsection{Notation and Basics Results} \label{sec:int.01}
 Let ${\mathbb R}^{m}$ be   the $m$-dimensional Euclidean space. The set of all $m \times n$ matrices with real entries is denoted by ${\mathbb R}^{m \times n}$ and ${\mathbb R}^m\equiv {\mathbb R}^{m\times 1}$. For 
 $M \in {\mathbb R}^{m\times n}$ the matrix $M^{\top}  \in {\mathbb R}^{n\times m}$  denotes  the  {\it transpose} of $M$ and $\lambda_{\min}(M)$ and $\lambda_{\max}(M)$ stands for the minimum and maximum  eigenvalue of the matrix  $M$, respectively.   If $x\in {\mathbb R}^m$, then ${\rm diag} (x)\in {\mathbb R}^{m \times m}$ denotes a  diagonal matrix with $(i,i)$-th entry equal to $x_i$, $i=1,\dots,n$.   The matrix ${\rm I}$ denotes the $n\times n$ identity matrix.  The following characterizations of symmetric positive definite matrices can be found in \cite[Theorem 7.2.5, pp. 404]{HornJohnson1990}.
 \begin{lemma} \label{eq:posdef}
 Let  $M\in {\mathbb R}^{n\times n}$ be symmetric.  Then,  $M$ is  positive
 definite if and only if $\det M_i>0$, for $i=1,\ldots, n$, where $M_i$ stands
 for the principal submatrix  $M$ determined by the first $i$ rows and first $i$ columns. 
  \end{lemma}
For a given  $M\in{\mathbb R}^{(n+1)\times(n+1)}$,  consider the following decomposition:
\begin{equation}\label{eq:notf}
	M:=\left(
	\begin{matrix}
	{\bar M} & {b}\\
	{b}^\top & \theta
	\end{matrix}
	\right),  \qquad   {\bar M}\in {\mathbb R}^{n\times n}, \quad {b}\in {\mathbb R}^{n\times 1}, \quad \theta \in {\mathbb R}.
\end{equation} 
  Using the decomposition  \eqref{eq:notf} we have the following
  characterizations  of positive definite  matrices and positive semidefinite matices,  for the proof see \cite[Propositions~16.1 and 16.2]{Gallier2011}.
    \begin{lemma} \label{le:aspd}
 Let  $M=M^\top \in{\mathbb R}^{(n+1)\times(n+1)}$. Consider the decomposition of $M$ in the form \eqref{eq:notf}.
  \begin{itemize}
\item[(i)] $M$ is positive definite if and only if $\theta>0$ and ${\bar M}-\frac{1}{\theta}bb^\top$ is positive definite;
\item[(ii)] If $\theta>0$, then $M$ is positive semidefinite if and only if ${\bar M}-\frac{1}{\theta}bb^\top$ is positive semidefinite;
\item[(iii)] If  ${\bar M}$ is positive  definite, then $M$ is positive semidefinite if and only if $\theta-b^\top{\bar M}^{-1}b\geq 0$.
 \end{itemize}
   \end{lemma}
Now, assume that $ {\bar M}$ is invertible. In this case the determinant of $M$ is given  
by the formula of the next lemma, see  \cite[Section 0.85, pp. 21]{HornJohnson1990}.
 \begin{lemma} \label{le:sscdet}
 Let  $M\in{\mathbb R}^{(n+1)\times(n+1)}$. Consider the decomposition of $M$ in the form \eqref{eq:notf}.  Then,  
 $\det(M)=(\theta-b^\top {\bar M}^{-1}b)\det{\bar M}$.
   \end{lemma}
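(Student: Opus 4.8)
The plan is to prove the identity by exhibiting an explicit block-triangular factorization of $M$ and then invoking the multiplicativity of the determinant. Since $\bar{M}$ is assumed invertible, the Schur complement $\theta-b^\top \bar{M}^{-1}b$ is well defined, and the key observation is that $M$ can be written as the product of a lower block-triangular matrix with unit diagonal blocks and an upper block-triangular matrix. This is nothing but block Gaussian elimination: subtracting $b^\top\bar{M}^{-1}$ times the first block of rows from the last row annihilates the $b^\top$ entry and produces the Schur complement in the bottom-right corner.

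First I would verify directly the factorization
\[
M=\left(\begin{matrix} {\rm I} & 0\\ b^\top \bar{M}^{-1} & 1\end{matrix}\right)\left(\begin{matrix} \bar{M} & b\\ 0 & \theta-b^\top \bar{M}^{-1}b\end{matrix}\right),
\]
by multiplying the two factors and checking that each of the four resulting blocks coincides with the corresponding block of $M$ in \eqref{eq:notf}. The top two blocks reproduce $\bar{M}$ and $b$, the bottom-left block gives $b^\top\bar{M}^{-1}\bar{M}=b^\top$, and the only entry requiring the Schur complement is the bottom-right block, where $b^\top\bar{M}^{-1}b+(\theta-b^\top\bar{M}^{-1}b)=\theta$.

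Next I would compute the determinant of each factor. The first factor is lower block-triangular with the identity ${\rm I}$ and the scalar $1$ on its diagonal, so its determinant equals $1$; the second factor is upper block-triangular, so its determinant equals the product of the determinants of its diagonal blocks, namely $\det(\bar{M})\cdot(\theta-b^\top\bar{M}^{-1}b)$. Applying $\det(XY)=\det(X)\det(Y)$ then yields $\det(M)=(\theta-b^\top\bar{M}^{-1}b)\det\bar{M}$, as claimed.

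There is no genuine obstacle here: the entire content is the verification of the block identity, which reduces to elementary block-matrix arithmetic, together with the standard fact that the determinant of a block-triangular matrix factors as the product of the determinants of its diagonal blocks. The only hypothesis actually used is the invertibility of $\bar{M}$, which guarantees that $\bar{M}^{-1}$ exists so that the factorization is meaningful.
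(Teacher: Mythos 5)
Your proof is correct. The paper itself gives no proof of this lemma---it only cites Horn and Johnson, remarking that the identity follows ``using the Schur complement and determinant''---and the block-triangular factorization you verify is precisely the standard Schur-complement argument behind that citation, so your approach is essentially the same as the paper's intended one, just written out in full.
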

   Next we present a version of the S-lemma which can be found for example  in  \cite[Theorem 2.2]{PolikTerlaky2007}.
 \begin{lemma} \label{eq:slemma}
 Let  $A, B \in  {\mathbb R}^{n\times n}$ be symmetric matrices. Assume that
 ${\hat x}^{\top}B{\hat x}<0$  for some ${\hat x} \in {\mathbb R}^{n\times n}$.
 Then  the following two statements are equivalent.
 \begin{itemize}
\item[(i)] If ${x}^{\top}B{x}\leq 0$, then  $ {x}^{\top}A{x}\geq 0$; 
\item[(ii)] There exists a $\beta\geq 0$ such that $A+\beta B$ is  positive semidefinite.
 \end{itemize}
 \end{lemma}
   We end this section by stating  a version of  Finsler's lemma, see  \cite{Finsler1936}. A 
   proof of it can be found, for example, in   \cite[Theorem 2]{Marcellini1984}.
   \begin{lemma} \label{le:Finslert}
 Let  $M, N\in{\mathbb R}^{n\times n}$ be  two symmetric matrices. If $x^\top N x=0$ implies  $x^\top M x>0$, then there exists $\lambda \in {\mathbb R}$ such that  $M+\lambda N$  is positive definite.
   \end{lemma}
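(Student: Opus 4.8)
The plan is to reduce the statement to a problem about two quadratic forms on the unit sphere and then argue by contradiction using the concavity of the minimal-eigenvalue function. Writing $f(x)=x^\top M x$ and $g(x)=x^\top N x$, the conclusion that $M+\lambda N$ is positive definite is equivalent to $f(x)+\lambda g(x)>0$ for every $x$ on the unit sphere $S=\{x\in\mathbb{R}^n:\|x\|=1\}$, the point $x=0$ being excluded from the hypothesis anyway. I would first dispose of the degenerate situations: if $N=0$ the hypothesis forces $f>0$ on all of $S$, so $M$ is already positive definite; and if $N$ is positive or negative definite, then $M+\lambda N$ is positive definite for $|\lambda|$ large, without even using the hypothesis. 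Hence I may assume $\lambda_{\min}(N)\le 0\le\lambda_{\max}(N)$, which is exactly the case in which the zero set of $g$ on $S$ is nontrivial.

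Next I would introduce $\phi(\lambda):=\lambda_{\min}(M+\lambda N)=\min_{x\in S}\bigl(f(x)+\lambda g(x)\bigr)$ and suppose, for contradiction, that no $\lambda$ works, i.e. $\phi(\lambda)\le 0$ for all $\lambda\in\mathbb{R}$. As a pointwise minimum of the family of affine functions $\lambda\mapsto f(x)+\lambda g(x)$, the function $\phi$ is concave and continuous, and its asymptotic slopes are governed by $N$: one has $\phi(\lambda)/\lambda\to\lambda_{\min}(N)$ as $\lambda\to+\infty$ and $\phi(\lambda)/\lambda\to\lambda_{\max}(N)$ as $\lambda\to-\infty$. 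In the generic sub-case $\lambda_{\min}(N)<0<\lambda_{\max}(N)$ these slopes have opposite signs, so $\phi\to-\infty$ in both directions and $\phi$ attains its maximum at some finite $\lambda^\ast$.

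The heart of the argument is then to extract from $\lambda^\ast$ a unit vector $v$ with $g(v)=0$ and $f(v)\le 0$. Every minimizer of $x\mapsto x^\top(M+\lambda^\ast N)x$ on $S$ lies in the eigenspace $V$ associated with the least eigenvalue $\phi(\lambda^\ast)$, on which $M+\lambda^\ast N$ acts as the scalar $\phi(\lambda^\ast)$; thus $f(v)=\phi(\lambda^\ast)-\lambda^\ast g(v)$ for every $v\in V\cap S$. Because $\lambda^\ast$ maximizes the concave function $\phi$, the value $0$ belongs to its superdifferential, which forces the set $\{g(v):v\in V\cap S\}$ to contain $0$: directly if some minimizer already has $g=0$, and otherwise because the set then contains values of both signs and the connected set $V\cap S$ lets me apply the intermediate value theorem. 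For such a $v$ we obtain $f(v)=\phi(\lambda^\ast)\le 0$ while $g(v)=0$ and $v\neq 0$, contradicting the hypothesis that $g(v)=0$ implies $f(v)>0$. This is the desired contradiction.

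The step I expect to be most delicate is precisely this extraction of a common zero of $g$ together with $f(v)\le 0$, and with it the boundary sub-cases $\lambda_{\min}(N)=0$ or $\lambda_{\max}(N)=0$, where one asymptotic slope vanishes and $\phi$ need not attain its maximum. In those cases I would replace the maximizer by a maximizing sequence $\lambda_j\to+\infty$ with minimizers $x_j\in S$: here $\lambda_j\, g(x_j)=\phi(\lambda_j)-f(x_j)$ stays bounded while $\lambda_j\to\infty$ and $g(x_j)\ge 0$, so $g(x_j)\to 0$ and $f(x_j)\le\phi(\lambda_j)\le 0$; passing to a convergent subsequence $x_j\to\bar x\in S$ yields $g(\bar x)=0$ and $f(\bar x)\le 0$, the same contradiction. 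Compactness of $S$ and of the eigenspace spheres is what makes all of these limiting arguments legitimate.
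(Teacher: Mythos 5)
The paper itself does not prove this lemma: it states it as a known version of Finsler's theorem and points to Finsler (1936) and to Theorem~2 of Marcellini (1984) for a proof, so there is no internal argument to compare yours against. Judged on its own terms, your proof is correct and self-contained, which is more than the paper offers. The reduction to the unit sphere, the elimination of $N=0$ and $N$ definite, the concavity of $\phi(\lambda)=\lambda_{\min}(M+\lambda N)=\min_{x\in S}\bigl(f(x)+\lambda g(x)\bigr)$, and the asymptotic slopes $\phi(\lambda)/\lambda\to\lambda_{\min}(N)$ as $\lambda\to+\infty$ and $\phi(\lambda)/\lambda\to\lambda_{\max}(N)$ as $\lambda\to-\infty$ are all sound. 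The step you should write out in full is the one you flag yourself: maximality of $\lambda^{\ast}$ gives, via Danskin's theorem, the one-sided derivatives $\phi'(\lambda^{\ast};+1)=\min_{v\in V\cap S}g(v)\le 0$ and $\phi'(\lambda^{\ast};-1)=-\max_{v\in V\cap S}g(v)\le 0$, hence $\min_{V\cap S}g\le 0\le\max_{V\cap S}g$; note that when $\dim V=1$ the set $\{g(v):v\in V\cap S\}$ is a singleton (since $g(-v)=g(v)$), so this bracketing already forces $g(v)=0$, and connectivity plus the intermediate value theorem is needed only when $\dim V\ge 2$, where $V\cap S$ is indeed connected. Two housekeeping points: in the boundary case you treat only $\lambda_{\min}(N)=0$ (i.e.\ $N$ positive semidefinite, where $g\ge 0$ on $S$); the case $\lambda_{\max}(N)=0$ follows by applying the same argument to $(-N,-\lambda)$, and your sequence argument in fact works for an arbitrary sequence $\lambda_j\to+\infty$, maximizing or not, since under the contradiction hypothesis $0\le\lambda_j g(x_j)=\phi(\lambda_j)-f(x_j)\le\max_{S}|f|$. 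With these details filled in, your argument is a complete and rather more transparent substitute for the cited references.
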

\section{Basics Results About the Hyperbolic Space} \label{sec:int.1}
In this section we recall some notations, definitions and basic properties about the geometry  of the hyperbolic space used throughout the paper. They can be found in many introductory books on  Riemannian and Differential  Geometry, for example in \cite{BenedettiPetronio1992, Ratcliffe2019}, see also \cite{Boumal2020}. 

Let $\langle\cdot , \cdot \rangle_{\tiny}$ be the {\it Lorentzian inner product}  of  $x:=(x_1, \ldots,x_n, x_{n+1})^{\top} $ and  $y:=(y_1, \ldots, y_n,y_{n+1})^{\top}$ on ${{\mathbb R}^{n+1}}$ defined  as follows
\begin{equation} \label{eq:ip}
{\langle} x, y{\rangle}:= x_{1}y_{1}+ \cdots + x_{n}y_{n}-x_{n+1}y_{n+1}.
\end{equation}
For each  $x\in {{\mathbb R}^{n+1}}$,  the {\it Lorentzian norm (length)} of $x$ is defined to be the complex number
\begin{equation} \label{eq:norm}
\|x\|:= \sqrt{{\langle} x, x{\rangle}}.
\end{equation}
Here $\|x\|$ is either positive, zero, or positive imaginary. In order to state
the inner product \eqref{eq:ip} in a convenient form, we  take the diagonal matrix ${\rm J}$ 
defined  by
\begin{equation} \label{eq:dmip}
{\rm J}:={\rm diag}(1, \ldots,1, -1) \in {\mathbb R}^{(n+1)\times (n+1)}.
\end{equation}
By using \eqref{eq:dmip},   the  Lorentz inner product  \eqref{eq:ip}  can be stated equivalently  as follows 
\begin{equation} \label{eq:ipef}
{\langle} x, y{\rangle}:=x^{\top}{\rm J} y, \qquad \forall x, y\in  {{\mathbb R}^{n+1}}.
\end{equation}
 Throughout the paper the {\it $n$-dimensional  hyperbolic space} and its {\it tangent hyperplane at a point $p$} are denoted  by
\begin{equation} \label{eq:hs}
{\mathbb H}^{n}:=\left\{ p\in {{\mathbb R}^{n+1}}:~\langle p, p\rangle
=-1, ~p^{n+1}>0\right\}, \qquad T_{p}{{\mathbb H}^n}:=\left\{v\in {{\mathbb R}^{n+1}}\, :\, 
\langle p, v \rangle=0\right\},
\end{equation}
respectively.
It is worth noting that  the  Lorentzian inner product defined in  \eqref{eq:ip}
is not  positive definite in the entire space ${{\mathbb R}^{n+1}}$.  However,
one can show that its restriction to the tangent spaces of ${\mathbb H}^{n}$ is positive 
definite; see \cite[Section 7.6]{Boumal2020}. Consequently,  $\|v\|>0$ for all
$v\in T_{p}{{\mathbb H}^n}$ and  all $p\in {\mathbb H}^{n}$ with $v\neq 0$.
Therefore, {\it  $\langle \cdot, \cdot\rangle$  and  $\|\cdot\|$ are  in fact  a
positive inner product and the associated  norm in $ T_{p}{{\mathbb H}^n}$, for
all $p\in {\mathbb H}^{n}$}. Next we present a basic lemma used in the sequel.
\begin{lemma} \label{le:fl}
Let $p, q\in {\mathbb H}^{n}$. Then,  $\langle p , q\rangle\leq -1$ and  $\langle p , q\rangle= -1$ if and only if $p=q$.
\end{lemma}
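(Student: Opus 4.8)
The plan is to reduce the statement to two elementary facts: the Cauchy--Schwarz inequality in ${\mathbb R}^n$ and the trivial inequality $(a-b)^2\ge 0$. First I would split each point into its ``spatial'' and ``time'' parts: writing $\bar p:=(p_1,\dots,p_n)^\top$ and $\bar q:=(q_1,\dots,q_n)^\top$ in ${\mathbb R}^n$, the defining constraint $\langle p,p\rangle=-1$ from \eqref{eq:hs} becomes $\bar p^\top\bar p-p_{n+1}^2=-1$, and since $p_{n+1}>0$ this yields the explicit formula $p_{n+1}=\sqrt{1+\bar p^\top\bar p}$, with the analogous identity for $q$.

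Next I would substitute these into the Lorentzian inner product \eqref{eq:ip}, obtaining $\langle p,q\rangle=\bar p^\top\bar q-\sqrt{(1+\bar p^\top\bar p)(1+\bar q^\top\bar q)}$. Setting $a:=\sqrt{\bar p^\top\bar p}$ and $b:=\sqrt{\bar q^\top\bar q}$, Cauchy--Schwarz gives $\bar p^\top\bar q\le ab$, so it suffices to prove $ab-\sqrt{(1+a^2)(1+b^2)}\le -1$, that is, $\sqrt{(1+a^2)(1+b^2)}\ge 1+ab$. Because $1+ab\ge 0$, both sides are nonnegative and I may square; the resulting inequality $(1+a^2)(1+b^2)\ge(1+ab)^2$ simplifies at once to $(a-b)^2\ge 0$, which establishes the bound $\langle p,q\rangle\le -1$.

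For the equality case I would run the chain backwards. Since $\bar p^\top\bar q\le ab\le\sqrt{(1+a^2)(1+b^2)}-1$, the equality $\langle p,q\rangle=-1$ forces equality in both links simultaneously. Equality in the squared step gives $(a-b)^2=0$, hence $a=b$, i.e.\ $\bar p$ and $\bar q$ share the same Euclidean norm; equality in Cauchy--Schwarz together with $\bar p^\top\bar q=ab\ge 0$ forces $\bar p$ and $\bar q$ to be nonnegatively proportional. These two conditions together give $\bar p=\bar q$, whence $p_{n+1}=\sqrt{1+a^2}=\sqrt{1+b^2}=q_{n+1}$ and therefore $p=q$. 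The converse is immediate, since $p=q$ gives $\langle p,q\rangle=\langle p,p\rangle=-1$.

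The computation is short, so the only delicate point --- and the one I would state most carefully --- is the equality analysis: I must ensure that the composite equality propagates through the two separate inequalities (Cauchy--Schwarz and $(a-b)^2\ge 0$), forcing each to be an equality, rather than arguing about each in isolation. Keeping the intermediate chain $\bar p^\top\bar q\le ab\le\sqrt{(1+a^2)(1+b^2)}-1$ explicit makes this propagation transparent.
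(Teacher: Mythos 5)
Your proof is correct, and it reaches the result by a genuinely different decomposition than the paper's. The paper never splits off the spatial part $\bar p\in{\mathbb R}^n$; instead it forms the augmented vectors $u=(p_1,\dots,p_n,1)^\top$ and $v=(q_1,\dots,q_n,1)^\top$ in ${\mathbb R}^{n+1}$, for which the hyperboloid constraints read $p_{n+1}=\sqrt{u^\top u}$ and $q_{n+1}=\sqrt{v^\top v}$, so that
\[
\langle p,q\rangle=\left(u^\top v-1\right)-\sqrt{u^\top u}\,\sqrt{v^\top v}\le -1
\]
follows from a \emph{single} application of the Cauchy--Schwarz inequality; and in the equality case, proportionality of $u$ and $v$ combined with their common last coordinate $1$ gives $u=v$, hence $p=q$, in one line. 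Your decomposition into $\bar p,\bar q$ instead produces two inequality links --- Cauchy--Schwarz in ${\mathbb R}^n$ and $(1+a^2)(1+b^2)\ge(1+ab)^2$, i.e.\ $(a-b)^2\ge 0$ --- so you must propagate the equality through both, and you handle this correctly: the sign condition $\bar p^\top\bar q=ab\ge 0$ fixes the proportionality constant to be nonnegative, $a=b$ fixes it to be $1$, and the degenerate case $\bar p=0$ is covered because $a=b$ then forces $\bar q=0$. The trade-off is clear: the paper's ``append a $1$'' trick absorbs your $(a-b)^2\ge 0$ step into the one Cauchy--Schwarz application and trivializes the equality analysis, while your version is built from the most elementary ingredients at the cost of exactly the two-link bookkeeping you flagged as the delicate point.
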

\begin{proof}
Since  $p, q\in {\mathbb H}^{n}$, we have  $\langle p,  p\rangle = -1$, $
p_{n+1}>0$,  $\langle q, q\rangle = -1$ and $q_{n+1}>0$. Thus, we have
$p_{n+1}=\sqrt{u^{\top}u}$ and $q_{n+1}=\sqrt{v^{\top}v}$, where  $u=\left(p_1,
\ldots, p^n, 1\right)^{\top}$ and $v=(q_1, \ldots, q_n, 1)^{\top}$.
Hence, it  follows from \eqref{eq:ip} that 
$
     \langle p, q\rangle=p_1q_1+\dots +p_{n}q_{n}-\sqrt{u^{\top}u}\sqrt{v^{\top}v}.
$
On the other hand,  by taking into account that Cauchy's inequality in the
Euclidean space implies that $\sqrt{u^{\top}u} \sqrt{v^{\top}v}\geq u^{\top}v$
and the equality holds if and  only if $u=v$, the result follows.  
\end{proof}
Therefore,  \eqref{eq:ip} actually defines a Riemannian metric on ${\mathbb H}^{n}$, see \cite[pp. 67]{Cannon1997}.  The   {\it  Lorentzian projection onto the  tangent hyperplane} $T_p{{\mathbb H}^n}$ is the linear mapping defined by
\begin{equation} \label{eq: proj}
 {\rm I}+pp^{\top}{\rm J} : {{\mathbb R}^{n+1}} \to T_p{{\mathbb H}^n}, \qquad \forall p\in  {{\mathbb H}^n},
\end{equation}
where ${\rm I}  \in {\mathbb R}^{(n+1)\times (n+1)}$ is the identity matrix.
\begin{remark} \label{eq:sadj}
The  Lorentzian projection \eqref{eq: proj} is self-adjoint with respect to the   Lorentzian inner product \eqref{eq:ip}. Indeed, $\langle ( {\rm I}+pp^{\top}{\rm J} )u, v\rangle=\langle u, ({\rm I}+pp^{\top}{\rm J})v\rangle $,  for all   $u, v \in  {{\mathbb R}^{n+1}}$ and all $p\in  {{\mathbb H}^n}$. Moreover, we also have $({\rm I}+pp^{\top}{\rm J})({\rm I}+pp^{\top}{\rm J})={\rm I}+pp^{\top}{\rm J}$, for all $p\in  {{\mathbb H}^n}$.
\end{remark} 
The  {\it intrinsic distance on the  hyperbolic space} between two  points $p, q \in {{\mathbb H}^n}$  is  defined by
\begin{equation} \label{eq:Intdist}
d(p, q):={\rm arcosh} (-\langle p , q\rangle).
\end{equation}
 It can  be shown that   $({{\mathbb H}^n}, d)$  is a complete metric space, so that $d(p,q)\ge 0$ for all $p,q \in {{\mathbb H}^n}$, and 
$d(p,q)=0$ if and only if $p=q$.   Moreover,  $({{\mathbb H}^n}, d)$ has the same topology as ${{\mathbb R}^{n}}$. The intersection curve of a plane though the origin of ${{\mathbb R}^{n+1}}$ 
with  $ {{\mathbb H}^n}$ is called a { \it geodesic}.   Moreover,  each  geodesic segment  
$\gamma: [a, b]\to {{\mathbb H}^n}$ is  {\it minimal},  i.e., its arc length is  equal to the
intrinsic  distance $\ell(\gamma)={\rm arcosh} (-\langle  \gamma(a), \gamma(b)\rangle)$ between its end points. 
 We say that $ \gamma $ is a {\it normalized geodesic}  if $\| \gamma ^{\prime}\|=1$. 
If $ p, q\in {{\mathbb H}^n}$ and  $q\neq p$, then the   unique {\it geodesic segment  from $p$ to $q$ } is   
\begin{equation*}
\gamma_{pq}(t)= \left( \cosh t + \frac{\langle p, q\rangle \sinh t}{\sqrt{\langle
p, q\rangle^2-1}}\right) p 
+ \frac{\sinh t}{\sqrt{\langle p, q\rangle^2-1}}\;q, \qquad \forall t\in [0, \;d(p,q)].
\end{equation*}
The {\it exponential mapping} $\exp_{p}:T_{p}{{\mathbb H}^n} \rightarrow {{\mathbb H}^n} $ is defined 
by $\exp_{p}v\,=\, \gamma _{v}(1)$, where $\gamma _{v}$ is the geodesic defined by its 
initial position $p$, with velocity $v$ at $p$. Hence, $\exp_{p}v=p$ for  $v=0$,  and 
\begin{equation*} 
\exp_{p}v:= \displaystyle \cosh(\|v\|) \,p+ \sinh(\|v\|)\, \frac{v}{\|v\|},
\qquad  \forall v\in T_p{{\mathbb H}^n}\setminus\{0\}.
\end{equation*} 
It is easy to prove that $\gamma _{tv}(1)=\gamma _{v}(t)$ for any values of $t$. 
Therefore,  for all $t\in {\mathbb R}$ we have
\begin{equation} \label{eq:geoexp}
\exp_{p}tv:=  \displaystyle \cosh(t\|v\|) \,p+ \sinh(t\|v\|)\, \frac{v}{\|v\|},  \qquad  \forall v\in T_p{{\mathbb H}^n}/\{0\}.
\end{equation}
We will also use the expression above for denoting  the geodesic starting 
at $p\in {{\mathbb H}^n}$ with velocity $v\in T_p{{\mathbb H}^n}$ at $p$.  
The {\it inverse of the exponential mapping}  is given by $\log_{p}q=0$, for $q=p$, and 
\begin{equation} \label{eq:expinv}
\log_{p}q:=  \displaystyle \frac{{\rm arcosh}(-\langle p, q\rangle)}{\sqrt{\langle p, q\rangle ^2-1}} 
\left[{\rm I}+pp^\top{\rm J}\right]q,  \qquad  q\neq p.
\end{equation}
It follows from \eqref{eq:Intdist} and \eqref{eq:expinv} that
\begin{equation*} 
d(p, q)=\|\log_{p}q\|, \qquad  p, q \in {{\mathbb H}^n}.
\end{equation*}
Let $\Omega \subseteq {{\mathbb H}^n}$ be an  open set and $f: \Omega \to  {\mathbb R}$ a differentiable function.  The {\it gradient on the hyperbolic space} of $f$  is the unique vector field $\Omega \ni p\mapsto \grad f(p)\in T_{p}{\cal M}$ such that $df(p)v=\langle \grad f(p), v\rangle$, see \cite[Proposition~7-5, p.162]{Boumal2020}.  Therefore, we have
\begin{equation} \label{eq:grad}
\grad f(p):= \left[{\rm I}+pp^{\top}{\rm J} \right] {\rm J}\cdot Df(p)= {\rm J}\cdot Df(p)+ \langle {\rm J}\cdot Df(p), p\rangle \,p,
\end{equation}
where $Df(p) \in {\mathbb R}^{n+1}$ is the usual gradient of  $ f$ at $p\in \Omega$. 
A {\it vector field} on $\Omega \subseteq {{\mathbb H}^n}$ is a smooth 
mapping $ X: \Omega \to {\mathbb R}^{n+1}$ such that $X(p)\in  T_{p}{{\mathbb H}^n}$.  
The  { \it covariant  derivative} of $X $  at  $p\in \Omega$ is map   $ \nabla X(p):T_p{{\mathbb H}^n} \to T_p{{\mathbb H}^n}$ given by
\begin{equation*} 
\nabla X(p):=\left[{\rm I}+pp^{\top}{\rm J} \right]DX (p),
\end{equation*}
where  $DX (p)$ denotes   the usual derivative  of  the vector field $X$ at the point $p$, see \cite[Formula~(7.62),  p.162]{Boumal2020}.  The {\it Hessian on the hyperbolic space} 
of a twice differentiable function $f: \Omega \to  {\mathbb R}$ at a point $p\in \Omega$ is  the  mapping 
$ \nabla \grad f (p):= \Hess f(p):T_p{{\mathbb H}^n} \to T_p{{\mathbb H}^n}$ given by
\begin{equation} \label{eq:Hess}
\Hess f(p):= \left[{\rm I}+pp^{\top}{\rm J} \right]\left[{\rm J}\cdot D^2f(p)+\langle {\rm J}\cdot Df(p), p\rangle {\rm I}\right],
\end{equation}
where $ D^2f(p)$ is the usual Hessian (Euclidean Hessian) of the function  $f$ at a point $p$, see  \cite[Proposition~7.6, p.163]{Boumal2020}.  Let $I\subseteq {\mathbb R}$ be an open interval,  $\Omega \subseteq {{\mathbb H}^n}$  an open set  and   $\gamma:I\to \Omega$ a  geodesic  segment.  Since  $f:{\cal C}\to {\mathbb R}$  is a differentiable  function  and   $ \gamma'(t)\in T_{\gamma(t)} {{\mathbb H}^n}$ for all $t\in I$, equality \eqref{eq:grad} implies 
\begin{equation} \label{eq:cr1}
\frac{d}{dt}f(\gamma(t)) =\left\langle \grad f(\gamma(t)), \gamma'(t) \right\rangle=
\left\langle {\rm J}\cdot D f(\gamma(t)), \gamma'(t) \right\rangle, \qquad \forall ~ t\in I.
\end{equation}
Moreover, if the function  $f$ is twice differentiable then it holds that
\begin{align} \label{eq:cr2}
\frac{d^2}{dt^2}f(\gamma(t)) &=\left\langle 
\Hess f(\gamma(t))\gamma'(t), \gamma'(t) \right\rangle \notag \\
                                             &=\left\langle {\rm J}\cdot D^2 f(\gamma(t))\gamma'(t), \gamma'(t) \right\rangle + \left\langle {\rm J}\cdot D f(\gamma(t)), \gamma(t) \right\rangle\left\langle \gamma'(t), \gamma'(t) \right\rangle, 
\qquad \forall ~ t\in I.
\end{align}
For each $p, q\in {{\mathbb H}^n}$ the covariant  derivative induces  the linear   isometry relative to the Lorentzian inner product $\langle \cdot , \cdot \rangle$,  $P_{pq} \colon T _{p} {{\mathbb H}^n} \to T _ {q}{{\mathbb H}^n}$ defined by  $P_{pq} v = V(t)$, where $V$ is the unique vector field on the geodesic segment $\gamma:[0,1] \to  {{\mathbb H}^n}$  from $p$ to $q$, i.e., $\gamma(0)=p$ and $\gamma(1)=q$  such that $\nabla V(t) {\gamma'(t)}= \nabla_{\gamma'(t)}V(t) = 0$ and $V(0)=v$, the so-called {\it parallel transport} along  the geodesic segment   $\gamma$ joining  $p$ to $q$. The explicitly formula of $P_{pq}$ is given by 
\begin{equation*}
{P}_{pq}(v):= v-\frac{\langle v,  \log_{q}p \rangle}{{\rm arcosh}^2 (-\langle p , q\rangle)}\left( \log_{q}p+ \log_{p}q \right) .
\end{equation*}
By using \eqref{eq:expinv}, after some algebraic manipulation, the last inequality becomes 
\begin{equation*}
{P}_{pq}(v):=\left[{\rm I}+\frac{1}{1-\langle p, q\rangle}(p+q)q^\top {\rm J}\right]v.
\end{equation*}
Note that  for all geodesic segment  $\gamma:[a,b] \to  {{\mathbb H}^n}$ we have
$ \gamma'(t)={P}_{pq}(\gamma'(a))$, for all $t\in [a, b]$ or equivalently that
$\gamma''(t)=0$,  for all $t\in [a, b]$.  Next we give two standard notations.
We denote the {\it open} and {\it closed ball} of radius $\delta >0$ and center 
$p\in {{\mathbb H}^n}$ by $B_{\delta}(p):=\{q\in {{\mathbb H}^n} : d(p,q)<\delta
\}$  and $\hat{B}_{\delta}(p):=\{q\in {{\mathbb H}^n} : d(p,q)\leq \delta \}$, respectively.

Let us  recall the   {\it  Lorentz group}  $G_{\cal L}$, preserving the   norm \eqref{eq:norm} and metric \eqref{eq:ipef},  defined by 
\begin{equation} \label{eq:lg}
G_{\cal L}:=\left\{Q\in  {\mathbb R}^{(n+1)\times (n+1)}: ~ Q^\top {\rm J} Q={\rm J} \right\}.
\end{equation}
Note  that  $|\det Q|=1$,  for all  $Q\in G_{\cal L}$. Moreover,  $Q^{-1}, Q^\top \in G_{\cal L}$,  for all $Q\in G_{\cal L}$.
\begin{example}
We exhibit two  examples of matrices  $Q\in G_{\cal L}$.  For the first example,
take  $u\in  {\mathbb R}^{n+1}$ such that $\|u\|>0$. Thus, $Q={\rm I}-(2/\|u\|^2
)uu^\top {\rm J}\in G_{\cal L}$. For the second example,  take $u, w\in  {\mathbb R}^{n+1}$ such that $\|u\|=1$ and $\|w\|=1$. Therefore, $Q= {\rm I}+2wu^\top {\rm J}-(1/(1+ u^\top {\rm J}w))(u+w)(u+w)^\top{\rm J}\in G_{\cal L}$.
\end{example} 
We end this section by remarking that  the Lorentz group \eqref{eq:lg} preserves geodesics of  ${{\mathbb H}^n}$. 
\begin{remark} \label{re:lgpg}
First note that for a  given  $Q\in G_{\cal L}$, we have $\|Qv\|=\|v\|$.
Moreover, $p\in {\mathbb H}^{n}$ and $v\in  T_{p}{{\mathbb H}^n}$ if and only if
$Qp\in {\mathbb H}^{n}$ and $Qv\in  T_{p}{{\mathbb H}^n}$, i.e., $v\in
T_p{{\mathbb H}^n}\setminus\{0\}$ if and only if $Qv\in T_{Qp}{{\mathbb
H}^n}\setminus\{0\}$. Consequently,  it follows from \eqref{eq:geoexp} that 
\begin{equation*} 
Q\exp_{p}tv= \exp_{Qp}tQv ,  \qquad  \forall v\in T_p{{\mathbb
H}^n}\setminus\{0\}.
\end{equation*}
Therefore, the Lorentz group preserves  the geodesics of  ${{\mathbb H}^n}$.
\end{remark}
\subsection{Properties of the Intrinsic Distance on the Hyperbolic Space} \label{sec:pid}
In this section,  we present some important properties of  the intrinsic distance from a  fixed point on the  hyperbolic space. In particular, we present the spectral decomposition of the Hessian of the intrinsic distance. The   { \it intrinsic distance function  on the  hyperbolic space from the fixed  point $ q \in {\mathbb H}^{n}$} 
is the mapping  $d_q:{\mathbb H}^{n} \to {\mathbb R}$ defined by
\begin{equation} \label{eq:dist}
d_q(p):={\rm arcosh} (-\langle p , q\rangle).
\end{equation}
The   intrinsic distance  from $q$, denoted by $d_q$  ,  is twice differentiable 
at $ p \in {\mathbb H}^{n} \backslash  \{q\}$. By combining \eqref{eq:grad} and \eqref{eq:dist},  we can  see that the
{\it  gradient of the distance from $q$} at  $ p$ is given by
\begin{equation} \label{eq:ddf}
 \grad d_q(p):= \frac{-1}{\sqrt{\langle  p, q\rangle^2-1}}\left[{\rm I}+pp^{\top}{\rm J}\right]q,  \qquad  q\neq p.
\end{equation}
Moreover, by using \eqref{eq:Hess} and \eqref{eq:dist}, we obtain that  the {\it  Hessian of the distance from $q$}  at  $ p$  is given  by
\begin{equation} \label{eq:Hessd}
\Hess d_q(p):= \frac{\langle  p, q \rangle}{\sqrt{\langle  p, q\rangle^2-1}} 
\left[{\rm I}+pp^{\top}{\rm J} \right]\left[\frac{1}{\langle  p, q \rangle^2-1}qq^{\top}{\rm J}  -{\rm I}\right],  \qquad  q\neq p.
\end{equation}
Before presenting the  spectral decomposition of the Hessian of  intrinsic distance from a fixed point on 
the  hyperbolic space, we need the following elementary result.
\begin{lemma} \label{le:dinint}
Let $p, q \in {\mathbb H}^{n}$ with  $q\neq p$. Then $\mbox{dim} \left(T_{p}{\mathbb H}^{n}\cap T_{q}{\mathbb H}^{n}\right)=n-1$ and  $\langle q+\langle p, q \rangle p, v\rangle =0$,  for all  $v \in T_{p}{\mathbb H}^{n}\cap T_{q}{\mathbb H}^{n}$. As a consequence, taking  
an orthonormal basis of the subspace $T_{p}{\mathbb H}^{n}\cap T_{q}{\mathbb H}^{n} $, say
$ \{v_1, \ldots, v_{n-1}\} $ 
and defining  $v_n= (q+ \langle p, q \rangle p)/\|q+ \langle p, q \rangle p\|$, 
the set $ \{v_1, \ldots, v_{n-1}, v_n\} $ is an orthonormal basis of $T_{p}{\mathbb H}^{n} $.
\end{lemma}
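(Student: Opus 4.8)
The plan is to reduce all three assertions to elementary linear algebra over the Lorentzian form, proceeding in the order: linear independence of $p,q$, the dimension count, the orthogonality identity, and finally the basis construction. First I would record that $p$ and $q$ are linearly independent. If $q=\lambda p$ for some scalar $\lambda$, then $\langle q,q\rangle=\lambda^2\langle p,p\rangle$ forces $\lambda^2=1$; the case $\lambda=1$ gives $q=p$, which is excluded, while $\lambda=-1$ yields $q_{n+1}=-p_{n+1}<0$, contradicting $q_{n+1}>0$. Hence no such $\lambda$ exists.

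For the dimension claim, I would introduce the linear map $L\colon{\mathbb R}^{n+1}\to{\mathbb R}^2$ given by $L(v):=(\langle p,v\rangle,\langle q,v\rangle)=\big((\mathrm{J}p)^\top v,(\mathrm{J}q)^\top v\big)$, using that $\mathrm{J}$ is symmetric. By definition of the tangent hyperplanes in \eqref{eq:hs}, the kernel of $L$ is exactly $T_{p}{\mathbb H}^{n}\cap T_{q}{\mathbb H}^{n}$. Since $\mathrm{J}$ is invertible and $p,q$ are independent, the covectors $\mathrm{J}p$ and $\mathrm{J}q$ are independent, so $L$ is surjective and $\dim\ker L=(n+1)-2=n-1$. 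The orthogonality identity is then immediate from bilinearity: writing $c:=\langle p,q\rangle$, for every $v\in T_{p}{\mathbb H}^{n}\cap T_{q}{\mathbb H}^{n}$ one has $\langle q+cp,v\rangle=\langle q,v\rangle+c\langle p,v\rangle=0$.

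For the consequence, set $w:=q+cp$. I would check that $\langle p,w\rangle=c+c\langle p,p\rangle=0$, so $w\in T_{p}{\mathbb H}^{n}$, and compute $\langle w,w\rangle=\langle q,q\rangle+2c\langle p,q\rangle+c^2\langle p,p\rangle=c^2-1$. Here Lemma~\ref{le:fl} enters: since $q\neq p$ it gives $c<-1$ strictly, so $\|w\|^2=c^2-1>0$ and $v_n:=w/\|w\|$ is a well-defined unit vector lying in $T_{p}{\mathbb H}^{n}$. The orthogonality identity already yields $\langle v_n,v_i\rangle=0$ for $i=1,\dots,n-1$; combined with the assumed orthonormality of $\{v_1,\dots,v_{n-1}\}$, the collection $\{v_1,\dots,v_{n-1},v_n\}$ is an orthonormal set of $n$ vectors inside the $n$-dimensional space $T_{p}{\mathbb H}^{n}$, hence an orthonormal basis.

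I do not expect a genuine obstacle in this argument; it is entirely routine once the right map $L$ is chosen. The only two points that require care are the nondegeneracy of $\mathrm{J}$ in the rank computation (which is what guarantees $\mathrm{J}p,\mathrm{J}q$ inherit the independence of $p,q$) and the passage from the weak inequality $c\le-1$ to the strict inequality $c<-1$. The latter is precisely where the hypothesis $q\neq p$ is used, via Lemma~\ref{le:fl}, and it is exactly what makes $\|w\|>0$ so that the normalization defining $v_n$ is legitimate.
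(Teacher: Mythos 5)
Your proof is correct. The paper states Lemma~\ref{le:dinint} without proof (it is presented as an elementary technical result), so there is no argument of the authors to compare against; your rank--nullity computation via the map $L(v)=(\langle p,v\rangle,\langle q,v\rangle)$, the bilinearity argument for the orthogonality identity, and the verification that $\|q+\langle p,q\rangle p\|^2=\langle p,q\rangle^2-1>0$ via the strict inequality $\langle p,q\rangle<-1$ from Lemma~\ref{le:fl} supply exactly the details the paper omits, including the two genuinely delicate points (nondegeneracy of ${\rm J}$ and the passage from $\le -1$ to $<-1$).
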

In the next lemma we present a spectral decomposition of the Hessian of the  intrinsic distance from a 
fixed point on the  hyperbolic space. The results in this lemma and the next one are closely related
to  \cite{ferreira2013newton}, see also its counterparts   on the sphere  in \cite[Lemma 2, Lemma 3]{FerreiraIusemNemeth2014}.
\begin{lemma} \label{le:spd}
Take $ q\in {\mathbb H}^{n} $  and let $ \Hess d_q(p) :T_{p}{\mathbb H}^{n} \to T_{p}{\mathbb H}^{n}$ be the Hessian of 
the intrinsic distance from $ q$ at the point $ p\in {\mathbb H}^{n} \backslash  \{q\} $. Then,
 \begin{equation} \label{eq:eqdis}
 \Hess d_q(p) \, \left( q+ \langle p, q \rangle p\right)=0, \qquad   
\Hess d_q(p)\, v= \frac{-\langle p, q \rangle}{\sqrt{\langle p, q\rangle^2-1}} \,v, \qquad   \forall \, v \in T_{p}{\mathbb H}^{n}\cap T_{q}{\mathbb H}^{n}.
 \end{equation}
 Moreover, $ \lambda_1=0$ and $ \lambda_2=-\langle p, q \rangle/\sqrt{\langle p, q\rangle^2-1}>0$ are the unique eigenvalues 
of $ \Hess d_q(p)$, with algebraic multiplicity $1$  and $n-1$, respectively. 
Moreover,    the Hessian  $ \Hess d_q(p)$  is positive semidefinite.
\end{lemma}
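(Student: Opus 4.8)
The plan is to verify the two displayed eigenvalue relations in \eqref{eq:eqdis} by substituting directly into the explicit Hessian formula \eqref{eq:Hessd}, and then to read off the eigenvalues, multiplicities and positive semidefiniteness from the orthonormal basis produced in Lemma~\ref{le:dinint}. Throughout I abbreviate $c:=\langle p,q\rangle$. Since $p\neq q$, Lemma~\ref{le:fl} gives $c<-1$, so $c^2-1>0$ and $\lambda_2=-c/\sqrt{c^2-1}>0$; in particular $0$ and $\lambda_2$ are distinct. Two elementary identities do most of the work: first, $({\rm I}+pp^\top{\rm J})p = p+p\langle p,p\rangle = 0$ because $\langle p,p\rangle=-1$; second, $w:=q+cp$ lies in $T_p{\mathbb H}^n$, since $\langle p,w\rangle = c + c\langle p,p\rangle = 0$.

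For the first relation I apply the inner factor of \eqref{eq:Hessd} to $w$. Using $q^\top{\rm J}w=\langle q,w\rangle=\langle q,q\rangle+c\langle q,p\rangle = -1+c^2 = c^2-1$, the factor $\tfrac{1}{c^2-1}qq^\top{\rm J}-{\rm I}$ sends $w$ to $q-w=-cp$. Applying the projection ${\rm I}+pp^\top{\rm J}$ then annihilates $p$ by the first identity above, so ${\rm Hess}\,d_q(p)\,w=0$. For the second relation, let $v\in T_p{\mathbb H}^n\cap T_q{\mathbb H}^n$, so that $\langle p,v\rangle=\langle q,v\rangle=0$. The inner factor now sends $v$ to $-v$ (the $qq^\top{\rm J}$ term vanishes because $\langle q,v\rangle=0$), and the projection fixes $-v$ because $\langle p,v\rangle=0$; hence ${\rm Hess}\,d_q(p)\,v = \tfrac{c}{\sqrt{c^2-1}}(-v)=\lambda_2 v$.

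It then remains to assemble the spectral picture. By Lemma~\ref{le:dinint}, $T_p{\mathbb H}^n\cap T_q{\mathbb H}^n$ has dimension $n-1$ and $w$ is orthogonal to it; moreover $w\neq 0$, for $w=0$ would force $q=-cp$ and then $\langle q,q\rangle=-c^2=-1$, contradicting $c^2>1$. Thus normalizing $w$ and adjoining an orthonormal basis $\{v_1,\dots,v_{n-1}\}$ of the intersection yields an orthonormal basis of $T_p{\mathbb H}^n$ consisting of eigenvectors, with eigenvalue $0$ on the span of $w$ and eigenvalue $\lambda_2$ on the intersection. Since $0\neq\lambda_2$, the two eigenvalues have algebraic multiplicities exactly $1$ and $n-1$. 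Finally, expanding an arbitrary $u\in T_p{\mathbb H}^n$ in this orthonormal eigenbasis as $u=\sum_{i=1}^{n-1}a_iv_i+a_n w/\|w\|$ gives $\langle {\rm Hess}\,d_q(p)u,u\rangle = \lambda_2\sum_{i=1}^{n-1}a_i^2\ge 0$, so the Hessian is positive semidefinite.

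The computation carries no real obstacle; everything reduces to the two identities $({\rm I}+pp^\top{\rm J})p=0$ and $\langle q,q+cp\rangle=c^2-1$. The only point deserving attention is verifying $w\neq 0$, which guarantees that the eigenvalue $0$ contributes a genuine one-dimensional eigenspace disjoint from the $\lambda_2$-eigenspace and hence that the stated multiplicities are correct.
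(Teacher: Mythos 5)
Your proof is correct and follows essentially the same route as the paper's: direct substitution of $q+\langle p,q\rangle p$ and of $v\in T_{p}{\mathbb H}^{n}\cap T_{q}{\mathbb H}^{n}$ into the explicit Hessian formula \eqref{eq:Hessd}, then Lemma~\ref{le:dinint} to get the orthonormal eigenbasis, the multiplicities, and positive semidefiniteness via the expansion $\langle {\rm Hess}\, d_q(p)u,u\rangle=\lambda_2(a_1^2+\cdots+a_{n-1}^2)$. Your explicit check that $w=q+\langle p,q\rangle p\neq 0$ is a small but welcome sharpening of a point the paper handles only implicitly through $\langle p,q\rangle\neq\pm 1$.
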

\begin{proof}
 Since $ p\neq  q$, Lemma~\ref{le:fl} implies that   $\langle p, q \rangle \neq -1$ and from \eqref{eq:Hessd}  the Hessian  is well defined.  As  $q^T{\rm J}q=-1$,  simple calculations give 
\begin{equation*}
\left[\frac{1}{\langle  p, q \rangle^2-1}\, qq^T {\rm J} -{\rm I}\right] \left( q+\langle p, q \rangle p\right)=
-\langle p, q \rangle p.
\end{equation*}
On the other hand, $ [{\rm I}+pp^{\top}{\rm J} ](-\langle p, q \rangle p)=0$, which combined with the latter 
equality and \eqref{eq:Hessd}, implies the first equality in \eqref{eq:eqdis}, 
and we also have that $\lambda_1=0$ is an eigenvalue of the Hessian. For proving the 
second equality in \eqref{eq:eqdis}, note that the definitions in \eqref{eq:hs}
imply  that
\begin{equation*}
\langle p, v \rangle=0, \qquad \langle q, v \rangle=0, \qquad  \forall \, v \in T_{p}{\mathbb H}^{n}\cap T_{q}{\mathbb H}^{n}.
\end{equation*}
Thus, the second inequality in \eqref{eq:eqdis} follows from  \eqref{eq:Hessd} and 
the last two equalities. In particular,  the Hessian is a multiple of the identity 
in the subspace $T_{p}{\mathbb H}^{n}\cap T_{q}{\mathbb H}^{n}$. Moreover, due to $ \mbox{dim} T_{p}{\mathbb H}^{n}=n $, we conclude, using Lemma~\ref{le:dinint}, 
that the eigenvalues $ \lambda_1 $ and $\lambda_2$ have  algebraic multiplicity $1$
and $n-1$, respectively, proving the first statement. For proving the second statement, let $ \{v_1, \ldots, v_{n-1}\} $ be an orthonormal
basis of the subspace $T_{p}{\mathbb H}^{n}\cap T_{q}{\mathbb H}^{n} $. Since $\langle p, q \rangle \neq 1$, 
we can  define $v_n= (q+ \langle p, q \rangle p)/\|q+\langle p, q \rangle p\|$. 
Hence,  Lemma~\ref{le:dinint} implies that  $ \{v_1, \ldots, v_{n-1}, v_n\} $ 
is an orthonormal basis of $T_{p}{\mathbb H}^{n} $. Therefore, given $u \in T_{p}{\mathbb H}^{n} $, 
there exist $a_1, \ldots, a_{n-1}, a_n \in {\mathbb R} $ such that $ u=a_1v_1+\cdots +a_{n-1}v_{n-1}+a_nv_n$, 
which, by using the first statement, entails $\langle \Hess d_q(p) u, u\rangle= \lambda_2(a_1^2+\cdots+a_{n-1}^2)$, 
completing the proof of the second statement. 
\end{proof}
Take $ q\in {\mathbb H}^{n} $ and define $ \rho_q:{\mathbb H}^{n} \to {\mathbb R} $ as
\begin{equation} \label{eq:sqdist}
\rho_q(p):=\frac{1}{2}d_q^2(p).
\end{equation}
By using the definition of $ \rho_q$ in \eqref{eq:sqdist} and \eqref{eq:Hess}, it is easy to 
conclude, after some algebra, that
\begin{equation} \label{eq:hsd}
\Hess \rho_q(p)=d_{q}(p)\Hess d_q(p)+[{\rm I}+pp^{\top}{\rm J}]{\rm J}\cdot Dd_q(p)Dd_q(p)^T,
\end{equation}
where $Dd_q(p)$ is the usual derivative of $d_q $ at the point $p$.
\begin{lemma} \label{le:csqd}
Take  $ q\in {\mathbb H}^{n} $ and define $ \Hess \rho_q(p) :T_{p}{\mathbb H}^{n} \to T_{p}{\mathbb H}^{n}$ as the Hessian 
of $\rho_q $ at the point $ p\in {\mathbb H}^{n} \backslash  \{q\} $.   
Then the following equalities hold: 
\begin{equation} \label{eq:chsd}
\Hess \rho_q(p) \, \left( q+ \langle p, q \rangle p\right)= q+ \langle p, q \rangle p, 
\qquad \quad  \Hess \rho_q(p)\, v=
\frac{-\langle p, q \rangle {\rm arcosh} (-\langle p , q\rangle)}{\sqrt{\langle p, q\rangle^2-1}}\,v , 
\end{equation}
for all $v \in T_{p}{\mathbb H}^{n}\cap T_{q}{\mathbb H}^{n} $. As a consequence,   $\mu_1=1$ and  $\mu_2=\langle p, q \rangle {\rm arcosh} (-\langle p , q\rangle)/\sqrt{\langle p, q\rangle^2-1} $ 
are the unique eigenvalues of $ \Hess \rho_q(p)$, with algebraic multiplicity $1$  and $n-1$, 
respectively. Moreover,   the Hessian  $\Hess \rho_q(p)$ is positive definite.
\end{lemma}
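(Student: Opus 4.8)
The plan is to read off both eigenvalue equations in \eqref{eq:chsd} directly from the explicit formula \eqref{eq:hsd} for ${\rm Hess}\,\rho_q(p)$, leaning on the spectral decomposition of ${\rm Hess}\,d_q(p)$ already established in Lemma~\ref{le:spd}. The guiding observation is that the two summands in \eqref{eq:hsd} are simultaneously ``diagonalized'' by the same splitting $T_p{\mathbb H}^n=\mbox{span}\{q+\langle p,q\rangle p\}\oplus(T_p{\mathbb H}^n\cap T_q{\mathbb H}^n)$ furnished by Lemma~\ref{le:dinint}: the distance-Hessian term scales the intersection subspace and annihilates the radial direction, while the correction term does exactly the opposite.

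First I would compute the Euclidean derivative $Dd_q(p)$. Since $d_q(p)={\rm arcosh}(-p^\top{\rm J}q)$ and $({\rm arcosh})'(s)=1/\sqrt{s^2-1}$, the chain rule gives $Dd_q(p)=-{\rm J}q/\sqrt{\langle p,q\rangle^2-1}$. Substituting this into the second summand of \eqref{eq:hsd} and using ${\rm J}^2={\rm I}$ and ${\rm J}^\top={\rm J}$, I would reduce it so that
\[
{\rm Hess}\,\rho_q(p)=d_q(p)\,{\rm Hess}\,d_q(p)+\frac{1}{\langle p,q\rangle^2-1}\big[{\rm I}+pp^\top{\rm J}\big]qq^\top{\rm J}.
\]

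Next I would test this operator on the two eigenspaces. Applying it to $q+\langle p,q\rangle p$: the first summand vanishes by Lemma~\ref{le:spd}, while in the correction term the scalar $q^\top{\rm J}(q+\langle p,q\rangle p)=\langle q,q\rangle+\langle p,q\rangle\langle q,p\rangle=\langle p,q\rangle^2-1$ cancels the denominator, leaving $[{\rm I}+pp^\top{\rm J}]q=q+\langle p,q\rangle p$; this yields the first equality in \eqref{eq:chsd} and the eigenvalue $\mu_1=1$. Applying it to any $v\in T_p{\mathbb H}^n\cap T_q{\mathbb H}^n$: now the correction term vanishes because $q^\top{\rm J}v=\langle q,v\rangle=0$, while the first summand equals $d_q(p)$ times the eigenvalue from Lemma~\ref{le:spd}, namely $d_q(p)\cdot(-\langle p,q\rangle)/\sqrt{\langle p,q\rangle^2-1}$, giving the second equality in \eqref{eq:chsd}. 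Since $q+\langle p,q\rangle p\neq 0$ (otherwise $\langle p,q\rangle^2=1$ would force $p=q$ by Lemma~\ref{le:fl}) and Lemma~\ref{le:dinint} exhibits an orthonormal basis $\{v_1,\dots,v_{n-1},v_n\}$ of $T_p{\mathbb H}^n$ adapted to this splitting, the eigenvalues $\mu_1$ and $\mu_2$ have multiplicities $1$ and $n-1$ and are the only ones. Positive definiteness then follows at once: $\mu_1=1>0$, and since $p\neq q$ forces $\langle p,q\rangle<-1<0$ by Lemma~\ref{le:fl}, both ${\rm arcosh}(-\langle p,q\rangle)>0$ and $-\langle p,q\rangle>0$, so $\mu_2>0$ as well.

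Given Lemmas~\ref{le:spd} and~\ref{le:dinint}, the argument is essentially mechanical, so I anticipate no serious obstacle; the one point demanding care is the algebraic reduction of the correction term in \eqref{eq:hsd}, namely verifying that it returns $q+\langle p,q\rangle p$ on the radial direction and annihilates $T_p{\mathbb H}^n\cap T_q{\mathbb H}^n$, so that it acts as the orthogonal projection onto the radial line. I would also watch the sign: the eigenvalue on $T_p{\mathbb H}^n\cap T_q{\mathbb H}^n$ is $-\langle p,q\rangle\,{\rm arcosh}(-\langle p,q\rangle)/\sqrt{\langle p,q\rangle^2-1}$, which is positive precisely because $\langle p,q\rangle<0$, and this positivity is exactly what the final claim rests on.
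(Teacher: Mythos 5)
Your proposal is correct and follows essentially the same route as the paper's own proof: compute $Dd_q(p)=-{\rm J}q/\sqrt{\langle p,q\rangle^2-1}$ to rewrite the correction term in \eqref{eq:hsd} as $\frac{1}{\langle p,q\rangle^2-1}[{\rm I}+pp^\top{\rm J}]qq^\top{\rm J}$, test the operator separately on the radial direction $q+\langle p,q\rangle p$ and on $T_p{\mathbb H}^n\cap T_q{\mathbb H}^n$ using Lemma~\ref{le:spd}, and then get multiplicities and positive definiteness from the orthonormal basis of Lemma~\ref{le:dinint}. Your sign bookkeeping for $\mu_2$ even matches the displayed equation \eqref{eq:chsd} (the positive value), which is what the positive-definiteness claim actually requires.
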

\begin{proof}
First note that  taking into account that   $Dd_q(p)=-({\rm J}q)/\sqrt{\langle p, q\rangle^2-1}$, we have
\begin{equation} \label{eq:auxhsd}
{\rm J}\cdot Dd_q(p)Dd_q(p)^T= \frac{1}{\langle p, q\rangle^2-1} qq^T{\rm J}.
\end{equation}
Due to  $ q^T{\rm J}q=-1$, it follows from the last equality that ${\rm J}\cdot Dd_q(p)Dd_q(p)^T(q+\langle p, q \rangle p)=q $. 
On the other hand, $ [{\rm I}+pp^{\top}{\rm J}]q= q+\langle p, q \rangle p$.  Hence, we obtain that
\begin{equation*}
[{\rm I}+pp^{\top}{\rm J}]{\rm J}\cdot Dd_q(p)Dd_q(p)^T (q+ \langle p, q \rangle p)=q+\langle p, q \rangle p.
\end{equation*}
Therefore, combining the last equality, equation \eqref{eq:hsd} and the first 
equality in \eqref{eq:eqdis}, we get that
\begin{equation*}
\Hess \rho_q(p)(q+\langle p, q \rangle p)=q+\langle p, q \rangle p,
\end{equation*}
which is the first equality in \eqref{eq:chsd}. For proving the second one, note first 
that the definition of $T_{q}{\mathbb H}^{n}$ implies that $ q^T{\rm J}v=0 $ for all 
$v \in T_{p}{\mathbb H}^{n}\cap T_{q}{\mathbb H}^{n}$. Then, by using \eqref{eq:auxhsd}, we have
\begin{equation*}
[{\rm I}+pp^{\top}{\rm J}]Dd_q(p)Dd_q(p)^T v =0, \qquad \forall \, v \in T_{p}{\mathbb H}^{n}\cap T_{q}{\mathbb H}^{n}.
\end{equation*}
Hence, equation \eqref{eq:hsd} implies that $\Hess \rho_q(p) v=d_{q}(p)\Hess d_q(p)v $ 
for all $v \in T_{p}{\mathbb H}^{n}\cap T_{q}{\mathbb H}^{n}$. Thus, by using  the second equality in  \eqref{eq:eqdis} 
and the definition of $ d_{q}(p) $  in \eqref{eq:dist}, the second equality 
in \eqref{eq:chsd} follows. The remainder of our proof requires 
similar arguments 
to those in the proof of 
Lemma~\ref{le:spd} (note that in the final part of the proof we must invoke the fact that 
$ {\rm arcosh} (-\langle p , q\rangle) >0 $ and $\langle p, q\rangle<0$, which
holds by applying Lemma~\ref{le:fl} together with  $ p\neq q $). 
\end{proof}
\section{Convex Sets on the Hyperbolic Space}\label{sec:cs}
In this section we present some properties of the convex sets of the hyperbolic space. It is worth to remark that 
the convex sets on the hyperbolic space $ {{\mathbb H}^n}$ are closely related to   convex cones belonging to   the interior of the   Lorentz cone 
\begin{equation} \label{eq:lc}
{\cal L}:=\left\{x\in {{\mathbb R}^{n+1}}: x_{n+1}\geq \sqrt{x_1^2+\cdots+x_n^2}\right\}.
\end{equation} 
\begin{definition}\label{def:cf}
The set ${\cal C\subseteq} {\mathbb H}^n $ is said to be  \emph{hyperbolically
convex} if for any $p$, $q\in {\cal C}$   the geodesic segment joining
$p$ to $q$  is contained in ${\cal C}$. 

\end{definition}
For each set $ {\cal A} \subseteq {{\mathbb H}^n}$, let ${\cal K_A}$ be the {\it cone spanned by} ${\cal A}$, 
namely, 
\begin{equation} \label{eq:pccone}
{\cal K_A}:=\left\{ tp \, :\, p\in A, \; t\in [0, +\infty) \right\}.
\end{equation}
Clearly, ${\cal K_A}$ is the smallest cone which contains ${\cal A}$ and belongs to the  interior of the   Lorentz cone ${\cal L}$. 
In the next result we relate a hyperbolically convex set with the cone spanned by it.
\begin{proposition} \label{pr:ccs}
The set ${\cal C}\subseteq\mathbb H^n$ is hyperbolically convex if and only if the cone 
${\cal K_C}$ is convex.
\end{proposition}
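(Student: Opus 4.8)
The plan is to exploit the fact that in the hyperboloid model geodesics are the intersections of $\mathbb{H}^n$ with two-dimensional linear subspaces, so that hyperbolic convexity of $\mathcal{C}$ ought to translate into ordinary (linear) convexity of the spanned cone $\mathcal{K_C}$. The bridge between the two statements is a single geometric lemma that I would establish first: \emph{for $p,q\in\mathbb{H}^n$, a point $z\in\mathbb{H}^n$ lies on the minimal geodesic segment joining $p$ and $q$ if and only if $z=(\alpha p+\beta q)/\sqrt{-\langle \alpha p+\beta q,\,\alpha p+\beta q\rangle}$ for some $\alpha,\beta\geq 0$ not both zero.} In words, the geodesic segment is exactly the radial normalization onto $\mathbb{H}^n$ of the planar sector $\{\alpha p+\beta q:\alpha,\beta\geq 0\}$ (the case $p=q$ being trivial).

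To prove this lemma I would read the two coefficients off the explicit formula for $\gamma_{pq}$ and verify that both are nonnegative for $t\in[0,d(p,q)]$: the coefficient of $q$ is manifestly $\geq 0$, while the coefficient of $p$, namely $\cosh t+\langle p,q\rangle\sinh t/\sqrt{\langle p,q\rangle^2-1}$, is positive on $[0,d(p,q))$ and vanishes at $t=d(p,q)$, since there $\tanh d(p,q)=\sqrt{\langle p,q\rangle^2-1}/(-\langle p,q\rangle)$. Conversely, each sector direction meets $\mathbb{H}^n$ in a single point: using $\langle p,p\rangle=\langle q,q\rangle=-1$ and $\langle p,q\rangle\leq -1$ from Lemma~\ref{le:fl}, a short computation gives $\langle \alpha p+\beta q,\alpha p+\beta q\rangle=-\alpha^2-\beta^2+2\alpha\beta\langle p,q\rangle\leq -(\alpha+\beta)^2<0$, and the last coordinate $\alpha p_{n+1}+\beta q_{n+1}>0$, so the normalization is well defined and lands in $\mathbb{H}^n$.

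With the lemma in hand I would first record the elementary identity $\mathcal{K_C}\cap\mathbb{H}^n=\mathcal{C}$: if $x=tp\in\mathcal{K_C}$ with $p\in\mathcal{C}$ and $x\in\mathbb{H}^n$, then $-1=\langle x,x\rangle=-t^2$ forces $t=1$, so $x=p\in\mathcal{C}$. I would then invoke the standard fact that a cone is convex if and only if it is closed under addition (here $\mathcal{K_C}$ is a cone by construction). For the implication ``$\mathcal{K_C}$ convex $\Rightarrow$ $\mathcal{C}$ hyperbolically convex'', take $p,q\in\mathcal{C}$ and a geodesic point $z=(\alpha p+\beta q)/\mu$ with $\mu=\sqrt{-\langle \alpha p+\beta q,\,\alpha p+\beta q\rangle}$; since $\mathcal{K_C}$ is a convex cone containing $p$ and $q$ it contains $\alpha p+\beta q$ and hence its positive rescaling $z$, and as $z\in\mathbb{H}^n$ we conclude $z\in\mathcal{K_C}\cap\mathbb{H}^n=\mathcal{C}$. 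For the converse, given $x=sp$ and $y=tq$ in $\mathcal{K_C}$ with $p,q\in\mathcal{C}$ and $s,t\geq 0$, the lemma shows that the normalization of $x+y=sp+tq$ is a point of the segment joining $p$ and $q$, hence lies in $\mathcal{C}$ by hyperbolic convexity; thus $x+y$ is a nonnegative multiple of a point of $\mathcal{C}$, i.e. $x+y\in\mathcal{K_C}$, and closure under addition yields convexity of $\mathcal{K_C}$.

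The main obstacle is the geometric lemma itself, specifically the nonnegativity of both barycentric coefficients along the entire segment together with the verification that the sector lies in the interior of the Lorentz cone so that radial normalization is defined; everything after that is routine bookkeeping with the cone structure.
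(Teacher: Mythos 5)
Your proposal has the same skeleton as the paper's proof: the direction ``$\mathcal{K}_{\mathcal{C}}$ convex $\Rightarrow$ $\mathcal{C}$ hyperbolically convex'' rests on the nonnegativity of the two barycentric coefficients of $\gamma_{pq}$ (your argument here is complete and essentially identical to the paper's, via $\tanh t\leq \tanh d(p,q)$), and the direction ``$\mathcal{C}$ hyperbolically convex $\Rightarrow$ $\mathcal{K}_{\mathcal{C}}$ convex'' rests on the claim that the radial normalization of $sp+tq$ lies on the minimal geodesic segment from $p$ to $q$. The gap is in your proof of that second claim, i.e.\ the converse half of your key lemma. What you offer --- that $\langle \alpha p+\beta q,\alpha p+\beta q\rangle\leq -(\alpha+\beta)^2<0$ and that the last coordinate $\alpha p_{n+1}+\beta q_{n+1}$ is positive, ``so the normalization is well defined and lands in $\mathbb{H}^n$'' --- only shows that the normalized sector point is a point of $\mathbb{H}^n$ lying in the plane spanned by $p$ and $q$, hence a point of the \emph{complete} geodesic through $p$ and $q$. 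It does not show that this point lies on the arc \emph{between} $p$ and $q$ rather than on one of the two unbounded arcs beyond $p$ or beyond $q$; equivalently, it does not show that the segment actually passes through every ray of the sector. Your closing paragraph confirms the confusion: you identify the ``main obstacle'' as coefficient nonnegativity plus well-definedness of the normalization, as if the converse inclusion followed from well-definedness alone. It does not, and this converse inclusion is precisely where the paper spends most of its effort: given $z=sp+tq$ and $a=1/\sqrt{-\langle z,z\rangle}$, the paper exhibits the explicit parameter $t=d(p,az)$, proves $d(p,az)\leq d(p,q)$, and then verifies by direct substitution that $\gamma_{pq}(d(p,az))=az$, which is the content of \eqref{eq:ficcs}.

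The missing step can be supplied in two ways. One is the paper's computation just described. The other, closer in spirit to your lemma, is an intermediate-value argument: writing $\gamma_{pq}(\tau)=\alpha(\tau)p+\beta(\tau)q$, the ratio $\beta(\tau)/\alpha(\tau)$ is continuous on $[0,d(p,q))$, equals $0$ at $\tau=0$, and tends to $+\infty$ as $\tau\to d(p,q)$ (since $\alpha(d(p,q))=0$ and $\beta(d(p,q))=1$); hence for $s,t>0$ there is $\tau$ with $\beta(\tau)/\alpha(\tau)=t/s$, so $\gamma_{pq}(\tau)$ and the normalization of $sp+tq$ are two points of $\mathbb{H}^n$ on the same ray and therefore coincide, since by Lemma~\ref{le:fl} (or by $\lambda^2\langle u,u\rangle=-1$ forcing $\lambda=1$) a ray meets $\mathbb{H}^n$ at most once --- the uniqueness fact you did record. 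Until one of these arguments is inserted, your hard direction invokes a statement that has not been proved.
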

\begin{proof}
Assume that  ${\cal C\subseteq} {\mathbb H}^n $  is a  hyperbolically convex set. 
Let $x, y \in {\cal K_C} $. For proving that $ {\cal K_C}$ is convex, it suffices
to show that 
\begin{equation} \label{eq:ez}
z=x + y \in {\cal K_C}.
\end{equation}
The definition of $ {\cal K_C}$ implies that there exist  $p, q \in {\cal C}$ and  $ s, t \in [0, +\infty)$ such that  $x=sp $ and $y=tq$. Hence,  due to $z=sp+tq$  with    $p, q \in  {\cal C\subseteq} {{\mathbb H}^n}$,  \eqref{eq:ez} and $\langle p , q\rangle\leq -1$ imply that  $\langle z, z\rangle \leq -(t+s)^2$, which is equivalent to $0<t+s\leq \sqrt{-\langle z,z\rangle} $.   Now we take 
\begin{equation} \label{eq:ccs1}
\gamma_{pq}(t)= \Big( \cosh t + \frac{\langle p, q\rangle \sinh t}{\sqrt{\langle p, q\rangle^2-1}}\Big) p 
+ \frac{\sinh t}{\sqrt{\langle p, q\rangle^2-1}}\;q, \qquad \forall t\in [0, \;d(p,q)].
\end{equation}
the  normalized segment of geodesic from $p$ to $q$.  To proceed  we first  need  to prove  that 
\begin{equation} \label{eq:ficcs}
d\left(p,az\right)\leq d(p,q), \qquad  \quad \gamma_{pq}\left(d(p, az)\right)=az,
\end{equation}
where $a:=1/\sqrt{-\langle z, z\rangle}$. Since the function $[0, +\infty] \ni \tau \mapsto {\rm arcosh}(\tau)$ is  increasing, it follows from \eqref{eq:Intdist}   that  to prove the inequality in \eqref{eq:ficcs} it suffices to show that  $-\langle p,az\rangle \leq -\langle p,q\rangle$ or the equivalent  inequality 
\begin{equation} \label{eq:ficcs1}
0 \leq \langle p,az\rangle-\langle p,q\rangle.
\end{equation}
Due to $z=sp+tq$ and $\langle p, p\rangle=-1$,  direct calculations yield $\langle p,az\rangle-\langle p,q\rangle=a(-s+t\langle p, q\rangle) - \langle p, q\rangle$. Thus,  taking into account that $ \langle p , q\rangle\leq -1$ and $s+t\leq \sqrt{-\langle z,z\rangle}=1/a$ ,  we conclude that
$$
\langle p,az\rangle-\langle p,q\rangle\geq a(s\langle p, q\rangle+t\langle p, q\rangle) - \langle p, q\rangle= a(-\langle p, q\rangle)(1/a-s-t)\geq 0, 
$$
which implies that  \eqref{eq:ficcs1} holds and consequently the inequality in
\eqref{eq:ficcs} also holds. Our next task is to prove the equality in
\eqref{eq:ficcs}. Thus, by using  \eqref{eq:ccs1},  we have to show that
\begin{equation} \label{eq:ccsee}
\gamma_{pq}(d(p, az))= \Big( \cosh d(p, az) + \frac{\langle p, q\rangle \sinh d(p, az)}{\sqrt{\langle p, q\rangle^2-1}}\Big) p 
+ \frac{\sinh d(p, az)}{\sqrt{\langle p, q\rangle^2-1}}\;q=az.
\end{equation}
It follows from \eqref{eq:Intdist} that  $\cosh d(p, az)=-a\langle p, z\rangle$,
which implies that  $\sinh d(p, az)=\sqrt{a^2\langle p, z\rangle^2-1}$. Thus,
considering that $a^2\langle z, z\rangle=-1$ and   $z=sp+tq$  with    $p, q \in
{\cal C\subseteq} {{\mathbb H}^n}$ and  $t \in [0, +\infty)$,    we have
\begin{equation} \label{eq:prie}
 \sinh d(p, az)= \sqrt{a^2\langle p, z\rangle^2-1}= a\sqrt{\langle p, z\rangle^2+ \langle z, z\rangle}=at \sqrt{\langle p, q\rangle^2-1} 
\end{equation}
and  $\cosh d(p, az)=-a\langle p, z\rangle=as-at\langle p,q\rangle$. Hence,
substituting  the last  equality and  \eqref{eq:prie} into    \eqref{eq:ccsee}
and taking into account that  $z=sp+tq$, we obtain that 
\begin{equation*} 
\gamma_{pq}(d(p, az))=\left(as-at\langle p,q\rangle+ \frac{\langle p, q\rangle at\sqrt{\langle p, q\rangle^2-1}}{\sqrt{\langle p, q\rangle^2-1}}\right)p+ \frac{at\sqrt{\langle p, q\rangle^2-1}}{\sqrt{\langle p, q\rangle^2-1}}q=asp+atq=az.
\end{equation*}
which concludes the proof of the equality in \eqref{eq:ficcs}. Since ${\cal C}$ is a  hyperbolically convex set  and  $d(p,az)\leq  d(p,q)$ we obtain that  
$ \gamma_{pq}(d(p, az)) \in {\cal C}$, which together with   \eqref{eq:pccone} and  \eqref{eq:ficcs}   implies that $z=(1/a)\gamma_{pq}(d(p,az)) \in {\cal K_C}$. 
Thus, ${\cal K_C}$ is convex. 

Now,  assume that the cone ${\cal K_C}$ is convex. First note that ${\cal C}={\cal K_C}\cap {{\mathbb H}^n}$. 
Take $p, q \in {\cal C}$  with $q\neq p $.  We must prove that the geodesic 
segment  from $p$ to $q$ is contained in ${\cal C}$.   As  $p, q \in {\cal K_C}$ and $\mathcal{K}_{\cal C}\subseteq {\cal L}$, we conclude that $q\neq - p$. Thus,  $ \langle p, q\rangle < -1 $ and  $d(p,q)>0$.  Let 
\begin{equation*}
 [0, \;d(p,q)]\ni t \mapsto \gamma_{pq}(t)= \alpha (t) p +\beta (t) q
\end{equation*}
be the normalized geodesic segment  from $p$ to $q$, where
\begin{equation*}
\alpha(t):= \cosh t + \frac{\langle p, q\rangle \sinh t}{\sqrt{\langle p, q\rangle^2-1}},  \qquad \beta(t):=\frac{\sinh t}{\sqrt{\langle p, q\rangle^2-1}}.
\end{equation*}
Since $\gamma_{pq}(t) \in {{\mathbb H}^n}$,   $p, q \in {\cal K_C}$ and $ {\cal K_C}$ is a convex cone,  for proving  that $ \gamma_{pq}(t) \in {\cal C}$
for all $t\in [0, \;d(p,q)]$, it suffices to prove that $\alpha(t)\geq 0$ and $\beta(t)\geq 0 $ for all $t\in  [0, \;d(p,q)] $. Due to $\sinh
t\geq 0$ for all $t\geq 0 $ we conclude that $\beta(t)\geq 0 $ for all $t\in  [0, \;d(p,q)] $. We proceed   to prove that $\alpha(t)\geq 0 $ for
all $t\in  [0, \;d(p,q)] $. For that, we first note  that due to   hyperbolic
tangent being an increasing  function, $\cosh d(p, q)=-\langle p, q\rangle$ and
$\sinh d(p, p)=\sqrt{\langle p, q\rangle^2-1}$, we have  
 $$
 \tanh t\leq \tanh d(p,q)=\frac{\sinh d(p, p)}{\cosh d(p, q)}=\frac{\sqrt{\langle p, q\rangle^2-1}}{-\langle p, q\rangle},  \qquad t\in  [0, \;d(p,q)].
 $$
 Hence, taking into account that $\cosh t\geq 0$ for all $t\in {\mathbb R}$ and $ \langle p, q\rangle < -1 $,  we conclude that
 \begin{equation*} 
\alpha(t)= \cosh t \left(1+ \frac{\langle p, q\rangle}{\sqrt{\langle p, q\rangle^2-1}}\tanh t\right)\geq 0,  \qquad t\in  [0, \;d(p,q)], 
\end{equation*}
 which  completes   the proof.
 
\end{proof}
\begin{remark}
	The hyperbolically convex sets are intersections of the hyperboloid  with  convex cones which belong to the interior of ${\cal L}$. 
Indeed, it follows easily from Proposition~\ref{pr:ccs}, that if ${\cal K}\subseteq {\rm int}(\cal L)$ is a convex cone, where ${\cal L}$ is the  Lorentz
cone,  then $ C={\cal K}\cap {{\mathbb H}^n}$ is a hyperbolically convex set and ${\cal K}={\cal K_C}$. 
\end{remark}
\begin{remark} \label{eq:lgpcs}
Let ${\cal C}\subseteq {\mathbb H}^n $ and  $Q\in G_{\cal L}$.  First note that $Q{\cal C}:=\{Qp:~p\in  {\mathbb H}^n \}$. It follows from Remark~\ref{re:lgpg} and Definition~\ref{def:cf} that  ${\cal C}$ is   {hyperbolically convex} if and only if $Q{\cal C}$ is {hyperbolically convex}. 
\end{remark}
\section{Intrinsic Projection Onto Hyperbolically Convex Sets}\label{sec:pj}
In this section we present some properties of the  intrinsic projection onto hyperbolically convex sets on hyperbolic spaces. Let ${\cal C\subseteq} {{\mathbb H}^n} $ be a closed hyperbolically convex set and $p\in {\mathbb H}^n$.  Consider the following constrained  optimization problem 
\begin{align} \label{d:copf}
 \min_{q\in {\cal C}}  d(p,q).
\end{align}
The minimal value of the function ${\cal C}  \ni q \mapsto d(p,q)$ is called the
{\it distance of $p$ from  ${\cal C}$} and it is denoted by $d_ {\cal C}(p)$,
i.e.,  $d_{\cal C} : {{\mathbb H}^n} \to  {\mathbb R} $ is defined by 
\begin{equation*}
 d_{\cal C}(p):=\min_{q\in {\cal C}}  d(p,q).
 \end{equation*}
Since $({{\mathbb H}^n}, d)$  is a complete metric space,  we have the following results.
\begin{proposition} \label{le:cds}
Let $ {\cal C}\in {{\mathbb H}^n}$  be a nonempty subset. Then, $ |d_{\cal C}(p) - d_{\cal C}(q)|\leq d(p, q)$,  for all  $p, q \in {{\mathbb H}^n}$. In particular, the function $d_{\cal C}$ is continuous. 
\end{proposition}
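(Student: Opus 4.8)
The plan is to exploit the triangle inequality for the complete metric space $({{\mathbb H}^n}, d)$, which is essentially the only structural fact required; the statement is nothing more than the assertion that the distance function to a fixed nonempty set is $1$-Lipschitz, and continuity will then drop out for free.

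First I would fix $p, q \in {{\mathbb H}^n}$ and take an arbitrary point $r \in {\cal C}$. The triangle inequality for $d$ gives $d(p, r) \leq d(p,q) + d(q,r)$, and since $d_{\cal C}(p) = \min_{r \in {\cal C}} d(p,r) \leq d(p,r)$ by definition \eqref{d:dts}, combining these two facts yields $d_{\cal C}(p) \leq d(p,q) + d(q,r)$ for \emph{every} $r \in {\cal C}$.

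Next, because the left-hand side $d_{\cal C}(p)$ does not depend on $r$, I would take the minimum over $r \in {\cal C}$ on the right-hand side only; the constant term $d(p,q)$ stays outside the minimization and $\min_{r\in {\cal C}} d(q,r)$ is exactly $d_{\cal C}(q)$ by \eqref{d:dts}. This produces $d_{\cal C}(p) \leq d(p,q) + d_{\cal C}(q)$, i.e. $d_{\cal C}(p) - d_{\cal C}(q) \leq d(p,q)$. Exchanging the roles of $p$ and $q$ and using the symmetry $d(q,p) = d(p,q)$ gives the reverse inequality $d_{\cal C}(q) - d_{\cal C}(p) \leq d(p,q)$, and the two estimates together furnish $|d_{\cal C}(p) - d_{\cal C}(q)| \leq d(p,q)$.

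Finally, continuity is immediate from the Lipschitz bound just obtained: given $\varepsilon > 0$, the choice $\delta = \varepsilon$ ensures that $d(p,q) < \delta$ forces $|d_{\cal C}(p) - d_{\cal C}(q)| < \varepsilon$, so $d_{\cal C}$ is in fact uniformly continuous on ${{\mathbb H}^n}$. There is no genuine obstacle here; the only point deserving a small amount of care is the passage to the minimum over $r$, where one must keep the term $d(p,q)$ outside the minimization and recognize $\min_{r\in {\cal C}} d(q,r)$ as $d_{\cal C}(q)$. If one prefers not to assume that the minimum is attained, the identical argument goes through verbatim with $\inf$ in place of $\min$.
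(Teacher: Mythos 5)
Your proof is correct and is precisely the standard metric-space argument the paper relies on implicitly: the paper states this proposition without proof, remarking only that it follows from $({\mathbb H}^n, d)$ being a complete metric space. Your handling of the $\min$ versus $\inf$ issue is also the right touch, since the statement assumes only that ${\cal C}$ is nonempty (not closed), so attainment of the minimum in \eqref{d:dts} is not guaranteed and the infimum version is the one that makes the argument airtight.
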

 Note that  due to $ {\cal C}$ being  a closed set and the distance function continuous, the problem \eqref{d:copf} has a solution.  The solution of the problem~\eqref{d:copf} is called  {\it  metric projection}, it was first studied in  \cite{Walter1974}. In the next  proposition  we explicitly give an important property of the  metric projection.
\begin{proposition} \label{pr:conv}
Let ${\cal C\subseteq} {{\mathbb H}^n}$  be a closed hyperbolically convex set and $p\in {{\mathbb H}^n}$.  A point   $y^p\in  {\cal C} $ is a  solution of the  problem~\eqref{d:copf} if and only if 
\begin{equation} \label{eq:fipp}
\left\langle \left({\rm I}+y^p (y^p)^\top{\rm J}\right)p,  \left({\rm I}+y^p (y^p)^\top{\rm J}\right)q\right\rangle \leq 0, \quad  \qquad  ~ \forall  q\in {\cal C}.
\end{equation} 
Furthermore, the solution of  problem~\eqref{d:copf} is  unique.
\end{proposition}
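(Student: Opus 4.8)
The plan is to recast the projection problem as the minimization of the \emph{squared} intrinsic distance and then apply the standard first-order variational characterization of minimizers of a geodesically convex function over a geodesically convex set. Since $t\mapsto t^2/2$ is strictly increasing on $[0,+\infty)$, the point $y^p$ solves \eqref{d:copf} if and only if it minimizes $\rho_p(q):=\tfrac12 d(p,q)^2=\tfrac12\,{\rm arcosh}^2(-\langle p,q\rangle)$ over $q\in{\cal C}$. The first ingredient I would record is the identity ${\rm grad}\,\rho_p(y)=-\exp_y^{-1}p$: differentiating $\rho_p=\tfrac12 d_p^2$ gives ${\rm grad}\,\rho_p(y)=d_p(y)\,{\rm grad}\,d_p(y)$, and substituting the explicit gradient of the distance together with formula \eqref{eq:expinv} yields precisely $-\exp_y^{-1}p$.

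For necessity, suppose $y^p$ minimizes $\rho_p$ over ${\cal C}$ and fix $q\in{\cal C}$. By Definition~\ref{def:cf} the minimal geodesic $\gamma:[0,1]\to{\cal C}$ with $\gamma(0)=y^p$, $\gamma(1)=q$ lies in ${\cal C}$, so $t\mapsto\rho_p(\gamma(t))$ attains its minimum over $[0,1]$ at $t=0$ and hence has nonnegative right derivative there. Using \eqref{eq:cr1} and $\gamma'(0)=\exp_{y^p}^{-1}q$ this reads $\langle{\rm grad}\,\rho_p(y^p),\exp_{y^p}^{-1}q\rangle\ge 0$, i.e. $\langle\exp_{y^p}^{-1}p,\exp_{y^p}^{-1}q\rangle\le 0$. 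Substituting \eqref{eq:expinv} for both inverse exponentials, the two nonnegative scalar prefactors ${\rm arcosh}(-\langle y^p,p\rangle)/\sqrt{\langle y^p,p\rangle^2-1}$ and ${\rm arcosh}(-\langle y^p,q\rangle)/\sqrt{\langle y^p,q\rangle^2-1}$ factor out, and \eqref{eq:fipp} follows; the degenerate cases $q=y^p$ or $p=y^p$ are immediate because $({\rm I}+y^p(y^p)^\top{\rm J})y^p=0$ and $({\rm I}+pp^\top{\rm J})p=0$.

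For sufficiency I would exploit the convexity of $\rho_p$ along geodesics. By Lemma~\ref{le:csqd} (with the roles of the fixed and variable points interchanged), ${\rm Hess}\,\rho_p$ is positive definite, so by \eqref{eq:cr2} the function $g(t):=\rho_p(\gamma(t))$ satisfies $g''(t)>0$ along any nonconstant geodesic; thus $g$ is strictly convex and in particular $g(1)\ge g(0)+g'(0)$. Taking $\gamma$ from $y^p$ to an arbitrary $q\in{\cal C}$ and using $g'(0)=\langle{\rm grad}\,\rho_p(y^p),\exp_{y^p}^{-1}q\rangle=-\langle\exp_{y^p}^{-1}p,\exp_{y^p}^{-1}q\rangle\ge 0$ (which is \eqref{eq:fipp} after reinstating the positive prefactors) gives $\rho_p(q)\ge\rho_p(y^p)$, so $y^p$ is a minimizer. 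Uniqueness follows from the \emph{strict} convexity: if $y_1\ne y_2$ were two minimizers, strict convexity of $g$ along the geodesic joining them, which lies in ${\cal C}$, would force $g$ to fall strictly below the common minimal value at interior points, a contradiction.

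The routine points are the gradient and Hessian substitutions and the bookkeeping of the positive scalar factors. The step carrying the real content is the passage from the intrinsic inequality $\langle\exp_{y^p}^{-1}p,\exp_{y^p}^{-1}q\rangle\le 0$ to minimality, which rests on the geodesic convexity of the squared distance supplied by Lemma~\ref{le:csqd}; existence of a minimizer is already granted by the remark preceding the statement. I expect the only obstacle to be organizational, namely ensuring that the equivalence between \eqref{eq:fipp} and the intrinsic variational inequality is handled uniformly, including the degenerate configurations in which one of the inverse exponentials vanishes.
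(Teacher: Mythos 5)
Your proof is correct, but for the sufficiency and uniqueness parts it follows a genuinely different route than the paper. Your necessity argument is essentially the paper's: a first-order variational inequality along geodesics emanating from $y^p$ into ${\cal C}$ (the paper works directly with $d$ and passes to the limit $t\to 0$ by hand, while you package the same computation through the identity ${\rm grad}\,\rho_p(y)=-\exp_y^{-1}p$, which is cleaner but equivalent). The divergence is in the converse and in uniqueness. The paper never invokes convexity of the squared distance: it rewrites \eqref{eq:fipp} algebraically as $\langle p,q\rangle+\langle p,y^p\rangle\langle y^p,q\rangle\le 0$, then uses only Lemma~\ref{le:fl} ($\langle y^p,q\rangle\le -1$, $\langle p,y^p\rangle\le -1$) to deduce $-\langle p,y^p\rangle\le -\langle p,q\rangle$ for all $q\in{\cal C}$, i.e.\ global minimality, and obtains uniqueness from the same inequality pair, concluding $\langle y^p,\hat y^p\rangle=-1$ and hence $y^p=\hat y^p$ again by Lemma~\ref{le:fl}. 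Your route instead leverages the geodesic strict convexity of $\rho_p$ supplied by Lemma~\ref{le:csqd} together with \eqref{eq:cr2}: the variational inequality plus convexity gives $\rho_p(q)\ge\rho_p(y^p)$, and strict convexity along the geodesic joining two putative minimizers gives uniqueness. Both are legitimate and non-circular (Lemma~\ref{le:csqd} and \eqref{eq:cr2} precede the proposition). What each buys: the paper's argument is elementary, entirely Lorentzian-algebraic, and sidesteps any regularity question; yours is the standard Hadamard-manifold argument, more conceptual and portable to settings without an explicit model. One point you should patch: Lemma~\ref{le:csqd} asserts positive definiteness of ${\rm Hess}\,\rho_p$ only at points $y\ne p$, so your claim $g''>0$ along a geodesic that passes through (or starts at) $p$ needs the extra observation that $\rho_p$ is smooth at $p$ with ${\rm Hess}\,\rho_p(p)={\rm I}$ (equivalently, that ${\rm arcosh}^2$ is smooth at $1$), or a continuity argument; this is minor, and the paper itself glosses over the same point in Example~\ref{ex:scdsq}, but in your proof the strictness at that point is actually load-bearing for uniqueness.
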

\begin{proof}
First we assume that $y^p$ is a solution of \eqref{d:copf}.  If $p\in {\cal C}$ i.e., $p=y^p$, then the inequality trivially holds.   Assume that
$p\notin {\cal C}$, i.e.,  $p\neq y^p $.  Take $ q\in {\cal C} $ such that $q\neq y^p$ 
and 
\begin{equation} \label{eq:cvxp}
 [0, 1]\ni t \mapsto   \exp_{y^p} (t \,\log_{y^p}q)= \cosh(t d(y^p,q)) y^p+\frac{\sinh(td(y^p,q))}{d(y^p,q)} \,\log_{y^p}q
\end{equation}
be the  geodesic segment  from $y^p $ to $q$. Thus, due to ${\cal C}$ being a  hyperbolically convex set, it follows  
that 
$d(p, y^p)\leq  d(p,\exp_{y^p} (t \,\log_{y^p}q))$, for all $t\in  [0, 1]$. 
Hence, using  \eqref{eq:Intdist}  and \eqref{eq:cvxp}, we conclude that
\begin{equation*}
	{\rm arcosh} (-\langle p , y^p \rangle) \leq  {\rm arcosh}\Big(- \Big\langle p, \cosh(t d(y^p,q)) y^p+\frac{\sinh(td(y^p,q))}{d(y^p,q)} \,\log_{y^p}q\Big\rangle\Big),
\end{equation*}
for all $t\in  [0, 1]$. Since  $1\leq -\langle p, y^p\rangle$, for all $p\in {\cal C}$, and  the function $[1, +\infty]\ni s \mapsto  {\rm arcosh}(s)$ is increasing, we obtain from \eqref{eq:cvxp} that
\begin{equation*}
\Big\langle p \,, \, \cosh(t d(y^p,q)) y^p+\frac{\sinh(td(y^p,q))}{d(y^p,q)} \,\log_{y^p}q \Big\rangle \leq \langle p , y^p\rangle, \qquad \forall \, t\in [0,1].
\end{equation*}
After some algebra,  we conclude from the previous inequality that
\begin{equation*}
\frac{\sinh(t(y^p,q))}{td(y^p, q)} \, \left\langle p, \log_{y^p}q \right\rangle  \leq \frac{1-\cosh(t d(y^p,q))}{td(y^p, q)} \, d(y^p,q) \, \langle p , y^p\rangle, 
\qquad \forall \, t\in [0,1].
\end{equation*}
Letting  $t$ go  to zero in the last inequality we have $ \langle p, \log_{y^p}q \rangle \leq 0$, which, in view of
\eqref{eq:expinv}, yields  
\begin{equation*}
\frac{{\rm arcosh}(- \langle y^p,  q\rangle)}{\sqrt{\langle y^p, q\rangle^2-1 }}  \left\langle p, \left({\rm I}+y^p(y^p)^\top{\rm J}\right)q\right\rangle \leq 0.
\end{equation*}
Thus, due to  $ {\rm arcosh}(- \langle y^p,  q\rangle)>0$,  we have  
$\lng p, \left({\rm I}+y^p(y^p)^\top{\rm J}\right)q\rng\leq 0$. Since   \[\lng
y^p(y^p)^\top{\rm J} p, \left({\rm I}+y^p(y^p)^\top{\rm J}\right)q\rng =0,\] 
the desired inequality \eqref{eq:fipp} follows.  To establish the converse we assume that  $y^p$ satisfies \eqref{eq:fipp}. 
Direct computations show that \eqref{eq:fipp}  is equivalent to
\begin{equation} \label{eq:eqpj}
 \langle p, q\rangle + \langle p, y^p\rangle \langle y^p, q\rangle\leq 0, \qquad \forall \, q\in {\cal C}.
\end{equation}
Since   $  \langle y^p, q\rangle \leq -1$ and    $\langle p, y^p\rangle\leq -1$, we have  $\langle p, y^p\rangle  \langle y^p, q\rangle \geq
-\langle p,y^p\rangle$.  Thus, \eqref{eq:eqpj} implies  that 
\begin{equation*}
1\leq  -\langle p, y^p\rangle\leq -  \langle p, q\rangle, \qquad \forall \, q\in {\cal C}.
\end{equation*}
Due to  the function $[0, +\infty]\ni t \mapsto  {\rm arcosh}(t)$  being  increasing,  the last  inequality implies that 
$$
 {\rm arcosh}(- \langle p, y^p\rangle)\leq {\rm arcosh} (-\langle p, q\rangle), \qquad \forall q \in {\cal C}, 
 $$ 
or equivalently that $d(p,y^p)\leq d(p,q)$,   for all $q \in {\cal C} $. Therefore,
$y^p$ is a solution of \eqref{d:copf} and the converse is
proved. For the uniqueness, let  $p, \hat p\in {\cal P_{\cal C}}(p)$. Since $ y^p, \hat y^p\in {\cal C}$ and $\langle y^p, \hat y^p\rangle\leq -1$ (see Lemma~\ref{le:fl}) , by the first statement, it follows 
from the equivalence of \eqref{eq:fipp} and \eqref{eq:eqpj} that
\begin{equation*}
 \langle p,  \hat y^p\rangle \leq - \langle p,\hat  y^p\rangle \langle y^p,  \hat y^p\rangle=\langle p,\hat  y^p\rangle|\langle y^p,  \hat y^p\rangle|, 
 \end{equation*}
which implies that   $ \langle p, \hat y^p\rangle \leq  \langle p, \hat y^p\rangle \langle y^p, \hat y^p\rangle^2$.  
Due to  $\langle p, \hat y^p\rangle\leq -1$, we obtain that $1\geq \langle \hat
y^p, \hat y^p\rangle^2  $.  Hence, taking into account  that $\langle \hat y^p,
\hat y^p\rangle\leq -1$, we conclude that   $\langle y^p, \hat y^p\rangle= -1$. Therefore, from Lemma~\ref{le:fl}  we conclude that $y^p= \hat y^p$ and  the solution set of the problem~\eqref{d:copf}   is a singleton set, which concludes the proof. 

\end{proof}
It follows from Proposition~\ref{pr:conv} that the {\it projection mapping}  ${\cal P_{\cal C}}:  {{\mathbb H}^n} \to {\cal C}$ given by 
\begin{align} \label{d:prjection}
{\cal P_{\cal C}}(p):= \arg\min_{q\in {\cal C}}  d(p,q) 
\end{align}
is well defined. Moreover,  \eqref{d:prjection}  is equivalent to the following inequality
\begin{equation} \label{d:prjectionef}
\left\langle \left({\rm I}+{\cal P_{\cal C}}(p){\cal P_{\cal C}}(p)^T{\rm J}\right)p,  \left({\rm I}+{\cal P_{\cal C}}(p){\cal P_{\cal C}}(p)^T{\rm J}\right)q\right\rangle \leq 0, \quad  \qquad  ~ \forall  q\in {\cal C}, ~ \forall  p\in  {{\mathbb H}^n}.
\end{equation} 
Considering that Lemma~\ref{le:fl} implies that for all $p, q \in {{\mathbb H}^n}$ we have $\langle {\cal P_{\cal C}}(p), q\rangle\leq  -1$,  we conclude from  \eqref{eq:expinv} that \eqref{d:prjectionef}  can be equivalently  stated as follows 
\begin{equation}  \label{d:uexp}
\langle \log_{{\cal P_{\cal C}}(p)}p, \log_{{\cal P_{\cal C}}(p)}q\rangle \leq 0,  \quad \qquad  ~ \forall  q\in {\cal C}, ~ \forall  p\in  {{\mathbb H}^n},
\end{equation} 
see \cite[Corollary 3.1]{FerreiraOliveira2002}. Furthermore,  since that   the
function $[0, +\infty] \ni \tau \mapsto {\rm arcosh}(\tau)$ is  increasing, it
follows from \eqref{eq:Intdist} that  \eqref{d:prjection}, \eqref{d:prjectionef}
and \eqref{d:uexp} are  also  equivalent to   
\begin{align} \label{d:prjcef}
{\cal P_{\cal C}}(p):= {\rm argmin}_{q\in {\cal C}}  (-\langle p, q\rangle).
\end{align}
An immediate consequence  of \eqref{d:prjcef}  is the  montonicity of the projection
mapping, stated as follows:
\begin{proposition}
Let ${\cal C\subseteq} {{\mathbb H}^n}$ be a nonempty closed hyperbolically convex set. Then 
\begin{equation*}
\langle {\cal P_{\cal C}}(p) - {\cal P_{\cal C}}(q), p-q \rangle \geq 0, \qquad      \forall  ~ p, q\in {\cal C}.
\end{equation*}
\end{proposition}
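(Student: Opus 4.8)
The plan is to deduce the inequality directly from the variational characterization \eqref{d:prjcef}, exactly as the text announces (``an immediate consequence of \eqref{d:prjcef}''). Write $y^p := {\cal P_{\cal C}}(p)$ and $y^q := {\cal P_{\cal C}}(q)$, so that both belong to ${\cal C}$, and recall that \eqref{d:prjcef} says $y^p$ minimizes the linear functional ${\cal C} \ni z \mapsto -\langle p, z\rangle$ over ${\cal C}$, and likewise $y^q$ minimizes $z \mapsto -\langle q, z\rangle$. The whole argument will rest only on the bilinearity and symmetry of the Lorentzian form $\langle \cdot, \cdot\rangle = (\cdot)^\top {\rm J}(\cdot)$, together with the feasibility of each minimizer as a competitor in the other's problem.

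First I would insert $y^q \in {\cal C}$ into the minimization defining $y^p$, and $y^p \in {\cal C}$ into the one defining $y^q$. By optimality this yields the two scalar inequalities $-\langle p, y^p\rangle \leq -\langle p, y^q\rangle$ and $-\langle q, y^q\rangle \leq -\langle q, y^p\rangle$. Adding them and clearing the signs gives $\langle p, y^q\rangle + \langle q, y^p\rangle \leq \langle p, y^p\rangle + \langle q, y^q\rangle$, that is, $0 \leq \langle p, y^p - y^q\rangle - \langle q, y^p - y^q\rangle$. By bilinearity the right-hand side collapses to $\langle p - q, y^p - y^q\rangle$, and by symmetry this equals $\langle {\cal P_{\cal C}}(p) - {\cal P_{\cal C}}(q), p - q\rangle$, so the claimed inequality $\langle {\cal P_{\cal C}}(p) - {\cal P_{\cal C}}(q), p - q\rangle \geq 0$ follows.

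I expect essentially no hard step here: this is the standard ``four-point'' monotonicity argument for the argmin of a linear functional, and it transfers verbatim to the hyperbolic setting once \eqref{d:prjcef} has reduced the intrinsic projection to the Euclidean-linear minimization of $-\langle p, \cdot\rangle$. The only point worth flagging is that $\langle \cdot, \cdot\rangle$ is the \emph{Lorentzian} form, which fails to be positive definite on all of ${{\mathbb R}^{n+1}}$; however, the derivation above never invokes positive definiteness, using only bilinearity and symmetry, so it is unaffected. I would also note that the identical computation applies for arbitrary $p, q \in {{\mathbb H}^n}$, not merely $p, q \in {\cal C}$, since \eqref{d:prjcef} holds for every $p \in {{\mathbb H}^n}$; this is in fact the more natural scope for a monotonicity statement about the projection mapping.
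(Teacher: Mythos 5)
Your proof is correct and is essentially identical to the paper's own argument: both insert each projection as a feasible competitor in the other's minimization problem via \eqref{d:prjcef}, add the two resulting inequalities, and conclude by bilinearity of the Lorentzian form. Your closing remark about the natural scope is also consistent with the paper, whose proof in fact begins with arbitrary $p, q \in {{\mathbb H}^n}$ despite the statement being phrased for $p, q \in {\cal C}$.
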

\begin{proof}
Take $ p, q \in {{\mathbb H}^n}$. Since ${\cal P_{\cal C}}(p),  \ {\cal P_{\cal C}}(q) \in {\cal C} $, it follows from  \eqref{d:prjcef} that    $ -\langle p, {\cal P_{\cal C}}(p) \rangle \leq  - \langle p, {\cal P_{\cal C}}(q) \rangle $ and  $-\langle q, {\cal P_{\cal C}}(q) \rangle \leq -\langle q,  {\cal P_{\cal C}}(p) \rangle $.  Hence,      $  \langle p,  {\cal P_{\cal C}}(p)-{\cal P_{\cal C}}(q) \rangle \geq 0 $ and $\langle -q,  {\cal P_{\cal C}}(p)-{\cal P_{\cal C}}(q) \rangle \geq 0$. Therefore, summing  the last two inequalities the desired  inequality follows.

\end{proof}
\begin{proposition} 
Let ${\cal C\subseteq} {{\mathbb H}^n}$ be a nonempty closed hyperbolically convex set. Then $ P_{\cal C}$ is continuous. 
\end{proposition}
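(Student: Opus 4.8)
The plan is to argue by sequences, exploiting the uniqueness of the projection established in Proposition~\ref{pr:conv}. First I would fix a point $p\in{{\mathbb H}^n}$ and an arbitrary sequence $p_k\to p$ in ${{\mathbb H}^n}$, and set $y_k:={\cal P_{\cal C}}(p_k)$ and $y:={\cal P_{\cal C}}(p)$. The goal is to show $y_k\to y$; since the sequence $p_k\to p$ is arbitrary, this yields the continuity of ${\cal P_{\cal C}}$.

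The first substantial step is to confine $\{y_k\}$ to a compact set. Here I would invoke Proposition~\ref{le:cds}: the non-expansiveness $|d_{\cal C}(p_k)-d_{\cal C}(p)|\le d(p_k,p)\to 0$ shows that $d(p_k,y_k)=d_{\cal C}(p_k)$ is a bounded sequence. Combining this with the triangle inequality $d(p,y_k)\le d(p,p_k)+d(p_k,y_k)$ gives a uniform bound $d(p,y_k)\le R$ for some $R>0$, so that $\{y_k\}\subset \hat{B}_{R}(p)$. Because $({{\mathbb H}^n},d)$ is a complete metric space carrying the topology of ${{\mathbb R}^n}$, its closed balls are compact, and consequently $\{y_k\}$ admits a subsequence $y_{k_j}\to \bar y$; since ${\cal C}$ is closed, $\bar y\in{\cal C}$.

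The heart of the argument is to identify $\bar y$ with $y$. For each $j$ the point $y_{k_j}$ is the projection of $p_{k_j}$, so by Proposition~\ref{pr:conv} it satisfies the inequality \eqref{eq:eqpj}, namely $\langle p_{k_j},q\rangle+\langle p_{k_j},y_{k_j}\rangle\langle y_{k_j},q\rangle\le 0$ for every $q\in{\cal C}$. Letting $j\to\infty$ and using the continuity of the Lorentzian inner product (a fixed bilinear form) together with $p_{k_j}\to p$ and $y_{k_j}\to\bar y$, I obtain $\langle p,q\rangle+\langle p,\bar y\rangle\langle\bar y,q\rangle\le 0$ for all $q\in{\cal C}$. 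This is precisely the characterizing inequality \eqref{eq:eqpj} for $\bar y$, so Proposition~\ref{pr:conv} and the uniqueness of the projection force $\bar y={\cal P_{\cal C}}(p)=y$.

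Finally, I would close with the standard subsequence principle: every convergent subsequence of $\{y_k\}$ has limit $y$, and $\{y_k\}$ lies in the compact ball $\hat{B}_{R}(p)$, so the whole sequence converges to $y$; otherwise some subsequence would remain at distance $\ge\varepsilon$ from $y$ yet, by compactness, possess a further subsequence converging to a point different from $y$, contradicting the previous step. I expect the main obstacle to be the compactness step, where the boundedness of $\{y_k\}$ must be combined with the compactness of closed balls in ${{\mathbb H}^n}$ (a consequence of completeness via the Hopf--Rinow theorem); the passage to the limit in the variational inequality is then routine thanks to the bilinearity and continuity of $\langle\cdot,\cdot\rangle$.
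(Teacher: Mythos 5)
Your proof is correct, and its skeleton coincides with the paper's: boundedness of the projected sequence via Proposition~\ref{le:cds} plus the triangle inequality, extraction of a cluster point, identification of that cluster point with ${\cal P_{\cal C}}(p)$, and conclusion by the uniqueness statement of Proposition~\ref{pr:conv} together with the subsequence principle. The one place where you genuinely diverge is the identification step. The paper stays entirely metric: it passes to the limit in the identity $d_{\cal C}(p^{k_j})=d(p^{k_j},{\cal P_{\cal C}}(p^{k_j}))$, using the continuity of $d_{\cal C}$ and of $d$, to conclude that the cluster point $q$ satisfies $d_{\cal C}(p)=d(p,q)$, i.e.\ it solves the minimization problem; only the uniqueness half of Proposition~\ref{pr:conv} is then invoked. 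You instead pass to the limit in the variational inequality \eqref{eq:eqpj}, $\langle p_{k_j},q\rangle+\langle p_{k_j},y_{k_j}\rangle\langle y_{k_j},q\rangle\le 0$, exploiting the bilinearity and continuity of the Lorentzian form, and then use the full characterization of Proposition~\ref{pr:conv} (note that the equivalence of \eqref{eq:fipp} and \eqref{eq:eqpj} is established inside that proposition's proof, so this is legitimate). Both identifications are valid; the paper's is marginally more economical, while yours makes explicit \emph{why} the limit satisfies the optimality condition and would transfer to any setting where projections are characterized variationally. You are also more careful than the paper on one point it leaves implicit: the existence of a cluster point, which you justify by the compactness of closed balls $\hat{B}_{R}(p)$ in $({{\mathbb H}^n},d)$ via completeness and Hopf--Rinow.
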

\begin{proof}
Let $ \{p^k\}\subseteq  {{\mathbb H}^n}$ be such that $ \lim_{k\to +\infty}p^k=p$.  Since Proposition~\ref{le:cds} implies that  $d_{\cal C}$ is continuous and taking into account that  \eqref{d:prjection}  implies 
\begin{equation} \label{eq:epcc}
d\lf(p^{k}, P_{\cal C}\lf(p^{k}\rg)\rg)=d_{\cal C}\lf(p^k\rg), 
\end{equation} 
we conclude that $\lf(d\lf(p^{k}, P_{\cal C}\lf(p^{k}\rg)\rg)\rg)_{k\in {\mathbb N}}$ is a bounded sequence. Consequently,  considering that 
$$
d\lf(p, P_{\cal C}\lf(p^{k}\rg)\rg)\leq d\lf(p, p^{k}\rg) + d\lf(p^{k}, P_{\cal
C}\lf(p^{k}\rg)\rg), 
$$
we also have  that  $\lf(P_{\cal C}\lf(p^{k}\rg)\rg)_{k\in {\mathbb N}}$ is also
bounded.  Let $q \in {\cal C}$  be a cluster point of  $\lf(P_{\cal
C}\lf(p^{k}\rg)\rg)_{k\in {\mathbb N}}$ and let $\lf(p^{k_j}\rg)_{j\in {\mathbb
N}}$be such that  $ \lim_{j\to +\infty}P_{\cal C}\lf(p^{k_j}\rg)=q$.   Hence,
using \eqref{eq:epcc} we have $d_{\cal C}\lf(p^{k_j}\rg)= d\lf(p^{k_j}, P_{\cal
C}\lf(p^{k_j}\rg)\rg)$, for all $j\in {\mathbb N}$.  Thus,  letting $j$ goes to $+\infty$ and using Proposition~\ref{le:cds}  we have $ d_{\cal C}(p)= d(p, q)$, which due to the second part of Proposition~\ref{pr:conv} implies that $q=P_{\cal C}(p)$. Consequently,    
$\lf(P_{\cal C}\lf(p^{k}\rg)\rg)_{k\in {\mathbb N}}$ has only one cluster point,
namely, $P_{\cal C}(p)$.  Thus, $ \lim_{k\to +\infty}P_{\cal C}\lf(p^{k}\rg)=P_{\cal C}(p)$ and the proof is  concluded.

\end{proof}
\section{Hyperbolically Convex Functions}\label{sec:cf}
In this section we study the basic properties of convex functions on the hyperbolic space. In particular, for differentiable convex functions, 
  the  first and  second order characterizations  will be presented.

\begin{definition}\label{def:cf-b}
Let $ {\cal C}\subseteq {{\mathbb H}^n}$ be a hyperbolically convex set and $I\subseteq {\mathbb R}$ an interval.  
A function $f:{\cal C}\to {\mathbb R}$  is said to be hyperbolically convex
(respectively, strictly hyperbolically convex) if for any   geodesic segment $\gamma:I\to {\cal C}$, the composition 
$ f\circ \gamma :I\to {\mathbb R}$ is convex (respectively, strictly convex) in the usual sense. 
\end{definition}
In the following remark we state  some general properties of  hyperbolically convex,  which follow directly from Definition~\ref{def:cf-b}.
\begin{remark} \label{re:pohcf}
It follows from Definition~\ref{def:cf-b} that $f:{\cal C}\to {\mathbb R}$  is a  hyperbolically convex function 
if and only if the {\it epigraph} $\mbox{epi}f:=\left\{(p, \mu):~ p\in {\cal C},
\, \mu \in {\mathbb R}, \, f(p)\leq  \mu   \right\}$, is convex in ${{\mathbb
H}^n}\times {\mathbb R}$. Moreover,  if $f:{\cal C}\to {\mathbb R}$ is a
hyperbolically convex function, then   the sub-level sets $\{p\in {\cal C}:\;
f(p)\leq a\}$ are   hyperbolically convex sets,  for all $a\in {\mathbb R}$.
Furthermore,  if  $f, f_1, \cdots f_n:  {{\mathbb H}^n}  \to {\mathbb R}$ are
hyperbolically convex in ${\cal C}$, then  $\zeta f$  and $f_1+\cdots+ f_n$ are hyperbolically convex in ${\cal C}$, for all $\zeta\geq 0$.
\end{remark}
The next  proposition follows from Remark~\ref{re:lgpg} and Remark~\ref{eq:lgpcs},  Definition~\ref{def:cf} and Definition~\ref{def:cf-b}. 
\begin{proposition} \label{pr:lgpcf}
Let $ {\cal C}\subseteq {{\mathbb H}^n}$ be a hyperbolically convex set,  $Q\in G_{\cal L}$ and ${\cal D}:=\{Q^{-1}p:~p\in {\cal C} \}$. The  function $f:{\cal C}\to {\mathbb R}$  is   hyperbolically convex if and only if  $f\circ Q:{\cal D}\to {\mathbb R}$   defined by $f\circ Q (q):=f(Qq)$  is   hyperbolically convex.
\end{proposition}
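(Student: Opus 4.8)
The plan is to reduce the statement to the one–dimensional convexity condition of Definition~\ref{def:cf-b} by carrying minimal geodesic segments back and forth with the isometry $Q$. First I would collect the routine preliminaries. Because $Q\in G_{\cal L}$ we have $Q^{-1}\in G_{\cal L}$, so Remark~\ref{eq:lgpcs} shows that ${\cal D}=Q^{-1}{\cal C}$ is hyperbolically convex; thus $f\circ Q$ is a function on a hyperbolically convex set and Definition~\ref{def:cf-b} genuinely applies to it. I would also observe that the two implications are symmetric, so it suffices to prove a single direction: if $f$ is hyperbolically convex on ${\cal C}$, then $f\circ Q$ is hyperbolically convex on ${\cal D}$. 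Granting this, applying it to the function $g:=f\circ Q$ on ${\cal D}$ and to the Lorentz transformation $Q^{-1}$ gives that $g\circ Q^{-1}$ is hyperbolically convex on $Q{\cal D}={\cal C}$; since $(g\circ Q^{-1})(p)=f(QQ^{-1}p)=f(p)$, we have $g\circ Q^{-1}=f$, which yields the converse.

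The key step is the correspondence between geodesic segments. Let $\gamma:I\to {\cal D}$ be a minimal geodesic segment; fixing $a\in I$ and setting $p:=\gamma(a)$ and $v:=\gamma'(a)\in T_p{{\mathbb H}^n}$, so that $\gamma(t)=\mbox{exp}_p((t-a)v)$, Remark~\ref{re:lgpg} then gives $Q\gamma(t)=Q\,\mbox{exp}_p((t-a)v)=\mbox{exp}_{Qp}((t-a)Qv)$ with $Qv\in T_{Qp}{{\mathbb H}^n}$, so $Q\gamma$ is again a geodesic segment. Since every geodesic segment of ${{\mathbb H}^n}$ is minimal, $Q\gamma$ is minimal, and because $Q\gamma(t)\in Q{\cal D}={\cal C}$ it is a minimal geodesic segment in ${\cal C}$. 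In other words, $\gamma\mapsto Q\gamma$ maps minimal geodesic segments of ${\cal D}$ into minimal geodesic segments of ${\cal C}$ (and $\sigma\mapsto Q^{-1}\sigma$ does the reverse).

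With this in hand the conclusion is essentially tautological, which is the point worth emphasizing: for every $t\in I$ one has $((f\circ Q)\circ\gamma)(t)=f(Q\gamma(t))=(f\circ(Q\gamma))(t)$, so $(f\circ Q)\circ\gamma$ and $f\circ(Q\gamma)$ are literally the same real function on $I$. Hence, assuming $f$ hyperbolically convex, $f\circ(Q\gamma)$ is convex by Definition~\ref{def:cf-b} applied to the minimal geodesic $Q\gamma$ in ${\cal C}$, and therefore so is $(f\circ Q)\circ\gamma$. As $\gamma$ was an arbitrary minimal geodesic segment in ${\cal D}$, the function $f\circ Q$ is hyperbolically convex, completing the one direction and, by the symmetry noted above, the whole equivalence. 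The only genuine content is the geodesic–transport step of the second paragraph, and that is exactly what Remark~\ref{re:lgpg} supplies; everything else is bookkeeping, so I do not expect any real obstacle beyond stating that correspondence cleanly.
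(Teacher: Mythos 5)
Your proposal is correct and follows essentially the same route as the paper: the paper simply asserts that the proposition follows from Remark~\ref{re:lgpg}, Remark~\ref{eq:lgpcs}, Definition~\ref{def:cf} and Definition~\ref{def:cf-b}, and your argument is exactly the fleshed-out version of that assertion (geodesic transport via $Q$, convexity of ${\cal D}$, plus the symmetry reduction using $Q^{-1}\in G_{\cal L}$).
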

\subsection{Characterization  of Hyperbolically Convex Functions}
In this section we present  first and second order characterization  for  hyperbolically convex functions on hyperbolic spaces.
\begin{proposition} \label{pr:cfocf}
Let ${\cal C} \subseteq {{\mathbb H}^n}$ be an open  hyperbolically convex set and $f:{\cal C}\to {\mathbb R}$ be a differentiable function. 
The function $f$ is  hyperbolically convex if and only if    $f(q)\geq f(p)+
\langle \grad f(p),  \log_{p}q \rangle$,  for all  $ p,  q\in {\cal
C}$ and $q\neq p$,  or equivalently, 
$$
f(q)\geq f(p)+ 
\frac{{\rm arcosh} (-\langle p , q\rangle)}{\sqrt{1-\langle p, q\rangle ^2}} \left\langle  [{\rm I}+pp^{\top}{\rm J}]{\rm J}\cdot Df(p) \, , \,q \right\rangle, 
\qquad \forall \, p,  q\in {\cal C}, \; q\neq p,
$$
where $Df$ is the usual gradient of $f$.
\end{proposition}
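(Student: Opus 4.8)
The plan is to reduce the statement to the elementary one-dimensional characterization of convexity applied to the restriction of $f$ along geodesics, using the chain rule \eqref{eq:cr1} to identify the relevant derivatives.

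First I would prove the forward implication. Fix $p,q\in{\cal C}$ with $q\neq p$, set $v=\mbox{exp}_{p}^{-1}q$, and consider $\gamma(t)=\mbox{exp}_{p}(tv)$ for $t\in[0,1]$, which is the minimal geodesic from $p$ to $q$ (geodesics in ${{\mathbb H}^n}$ are minimal). By Definition~\ref{def:cf-b}, the composition $g:=f\circ\gamma$ is convex on $[0,1]$, and it is differentiable since $f$ is differentiable and $\gamma$ is smooth. The chain rule \eqref{eq:cr1} gives $g'(0)=\langle{\rm grad}f(p),\gamma'(0)\rangle=\langle{\rm grad}f(p),v\rangle$. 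Applying the supporting tangent-line inequality for the convex function $g$, namely $g(1)\geq g(0)+g'(0)$, yields exactly $f(q)\geq f(p)+\langle{\rm grad}f(p),\mbox{exp}_{p}^{-1}q\rangle$.

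For the converse, I would assume the gradient inequality holds for every pair of points and show that $f\circ\gamma$ is convex for an arbitrary minimal geodesic segment $\gamma:[a,b]\to{\cal C}$. The key observation is that for a geodesic one has $\mbox{exp}_{\gamma(s)}^{-1}\gamma(t)=(t-s)\gamma'(s)$ for all $s,t\in[a,b]$, which follows from the reparametrization identity $\gamma_{tv}(1)=\gamma_{v}(t)$ recorded just before \eqref{eq:geoexp}. Writing $g(t)=f(\gamma(t))$ and applying the hypothesis with $p=\gamma(s)$ and $q=\gamma(t)$, together with the chain rule \eqref{eq:cr1} identifying $g'(s)=\langle{\rm grad}f(\gamma(s)),\gamma'(s)\rangle$, gives $g(t)\geq g(s)+g'(s)(t-s)$ for all $s,t\in[a,b]$. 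Since a differentiable one-variable function lying above all of its tangent lines is convex, $g$ is convex, and hence $f$ is hyperbolically convex. Finally, the equivalence with the explicit ${\rm arcosh}$-form is obtained by substituting formula \eqref{eq:expinv} for $\mbox{exp}_{p}^{-1}q$ and formula \eqref{eq:grad} for ${\rm grad}f(p)$ into the inner product, and then using the self-adjointness and idempotency of the Lorentzian projection ${\rm I}+pp^{\top}{\rm J}$ from Remark~\ref{eq:sadj} to collapse $\langle({\rm I}+pp^{\top}{\rm J}){\rm J}Df(p),({\rm I}+pp^{\top}{\rm J})q\rangle$ to $\langle{\rm J}Df(p),[{\rm I}+pp^{\top}{\rm J}]q\rangle$.

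I expect the only genuinely delicate point to be the converse direction: one must justify that the pointwise gradient inequality, which is a priori only a statement about pairs of points, forces convexity of the one-variable restriction. This hinges precisely on the geodesic identity $\mbox{exp}_{\gamma(s)}^{-1}\gamma(t)=(t-s)\gamma'(s)$ and on the invariance of convexity under affine reparametrization, so that the restriction to $\gamma$ inherits the tangent-line inequality uniformly in $s$. The remaining manipulations, including the final algebraic substitution, are routine.
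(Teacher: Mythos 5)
Your proof is correct and follows essentially the same route as the paper: reduce to the one-dimensional tangent-line characterization of convexity along geodesics via the chain rule \eqref{eq:cr1}, then pass between the two stated forms using \eqref{eq:grad}, \eqref{eq:expinv} and the self-adjointness/idempotency of ${\rm I}+pp^{\top}{\rm J}$. The only difference is that you make explicit the identity $\mbox{exp}_{\gamma(s)}^{-1}\gamma(t)=(t-s)\gamma'(s)$ needed for the converse, a step the paper treats as immediate.
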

\begin{proof}
By using \eqref{eq:cr1},  the usual characterization of scalar convex functions implies that, 
for all minimal geodesic  segment $\gamma:I\to {\cal C}$, the composition $ f\circ \gamma :I\to {\mathbb R}$ 
is convex if and  only if
$$
f(\gamma(t_2))\geq  f(\gamma(t_1))+ 
\left\langle {\rm J}\cdot D f(\gamma(t_1)), \gamma'(t_1) \right\rangle (t_2-t_1), \qquad  \forall ~ t_2, t_1 \in I.
$$
Note that if  $\gamma:[0,1] \to {\cal C}$ is the   geodesic segment from $p=\gamma(0)$ to $q=\gamma(1)$, 
then it  may be represented as $\gamma(t)=\exp_{p}t\log_{p}q$. Moreover,  $\gamma'(0)=\log_{p}q$ and  $\gamma'(1)=-\log_{q}p$. Therefore,  the first inequality of the proposition is an immediate consequence of  
the inequality above,  Definition~\ref{def:cf-b} and equation \eqref{eq:expinv}. For concluding the proof, note that 
equations  \eqref{eq:grad} and \eqref{eq:expinv} together with Remark~\ref{eq:sadj} imply the equivalence between the two inequalities of the lemma.

\end{proof}
\begin{proposition}
Let ${\cal C} \subseteq {{\mathbb H}^n}$ be an open  hyperbolically convex set and  $f:{\cal C} \to {\mathbb R}$ a differentiable function. 
The function $f$ is  hyperbolically convex if and only if   $  \grad f$  satisfies the inequality
$\langle \grad f(p),  \log_{p}q \rangle + \langle \grad f(q),  \log_{q}p \rangle \leq 0$, for all  $p,  q\in {\cal C}$ and $q\neq p$, 
or equivalently, 
$$
 \left\langle {\rm J}\cdot Df(p)-  {\rm J}\cdot Df(q) , p-q\right\rangle -  (\langle p,  q \rangle +1)\left[ \langle {\rm J}\cdot Df(p), p \rangle+  \langle {\rm J}\cdot Df(q), q \rangle\right] \geq 0, \qquad  \forall \, p,  q\in {\cal C}, \; q\neq p,
$$
where $Df$ is the usual gradient of $f$. 
\end{proposition}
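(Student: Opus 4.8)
The plan is to reduce the statement to the classical one-variable fact that a differentiable function is convex exactly when its derivative is nondecreasing, transporting this along geodesics through \eqref{eq:cr1}. First I would fix $p,q\in{\cal C}$ with $q\neq p$ and take the minimal geodesic $\gamma:[0,1]\to{\cal C}$ with $\gamma(0)=p$ and $\gamma(1)=q$; by the preceding proposition $\gamma'(0)=\exp_p^{-1}q$ and $\gamma'(1)=-\exp_q^{-1}p$, while \eqref{eq:cr1} gives $(f\circ\gamma)'(t)=\langle{\rm grad}\,f(\gamma(t)),\gamma'(t)\rangle$. Consequently $(f\circ\gamma)'(0)=\langle{\rm grad}\,f(p),\exp_p^{-1}q\rangle$ and $(f\circ\gamma)'(1)=-\langle{\rm grad}\,f(q),\exp_q^{-1}p\rangle$, which already links the two endpoint derivatives to the two terms of the intrinsic inequality.

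For the forward implication I would argue that if $f$ is hyperbolically convex then $f\circ\gamma$ is convex on $[0,1]$ by Definition~\ref{def:cf-b}, so $(f\circ\gamma)'$ is nondecreasing and $(f\circ\gamma)'(1)\geq(f\circ\gamma)'(0)$; inserting the two expressions above is precisely $\langle{\rm grad}\,f(p),\exp_p^{-1}q\rangle+\langle{\rm grad}\,f(q),\exp_q^{-1}p\rangle\leq 0$. For the converse, since convexity is unaffected by affine reparametrization, it is enough to test $f\circ\gamma$ on normalized geodesics $\gamma:I\to{\cal C}$. Given $t_1<t_2$ in $I$, minimality of geodesic segments yields $\exp_{\gamma(t_1)}^{-1}\gamma(t_2)=(t_2-t_1)\gamma'(t_1)$ and $\exp_{\gamma(t_2)}^{-1}\gamma(t_1)=-(t_2-t_1)\gamma'(t_2)$; applying the hypothesis to $\gamma(t_1),\gamma(t_2)$ and dividing by $t_2-t_1>0$ gives $(f\circ\gamma)'(t_1)\leq(f\circ\gamma)'(t_2)$, so $(f\circ\gamma)'$ is nondecreasing and $f\circ\gamma$ is convex.

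It remains to establish that the intrinsic and explicit inequalities coincide. Here I would substitute \eqref{eq:grad} and \eqref{eq:expinv}, and use the self-adjointness and idempotency of ${\rm I}+pp^\top{\rm J}$ from Remark~\ref{eq:sadj} together with $\langle p,p\rangle=-1$, to obtain $\langle{\rm grad}\,f(p),\exp_p^{-1}q\rangle=c(p,q)\big[\langle{\rm J}Df(p),q\rangle+\langle p,q\rangle\langle{\rm J}Df(p),p\rangle\big]$, where $c(p,q)={\rm arcosh}(-\langle p,q\rangle)/\sqrt{\langle p,q\rangle^2-1}>0$ is symmetric in $p$ and $q$. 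Adding the analogous expression obtained by swapping $p$ and $q$ and dividing out the positive factor $c(p,q)$, the intrinsic inequality becomes $\langle{\rm J}Df(p),q\rangle+\langle{\rm J}Df(q),p\rangle+\langle p,q\rangle[\langle{\rm J}Df(p),p\rangle+\langle{\rm J}Df(q),q\rangle]\leq 0$; expanding $\langle{\rm J}Df(p)-{\rm J}Df(q),p-q\rangle$ shows this expression is the negative of the left-hand side of the claimed explicit inequality, which finishes the equivalence. I expect the main obstacle to be the bookkeeping in the converse, namely confirming that restricting the hypothesis to points of one geodesic reproduces monotonicity of $(f\circ\gamma)'$; the algebra collapsing the two forms is lengthy but routine once the common factor $c(p,q)$ and the relation $\langle p,p\rangle=-1$ are isolated.
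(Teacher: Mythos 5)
Your proposal is correct and follows essentially the same route as the paper's proof: both reduce to the monotone-derivative characterization of scalar convexity along geodesics via \eqref{eq:cr1}, use the endpoint identities $\gamma'(0)=\exp_p^{-1}q$, $\gamma'(1)=-\exp_q^{-1}p$, and obtain the explicit form from \eqref{eq:grad}, \eqref{eq:expinv} and Remark~\ref{eq:sadj}. Your write-up merely fills in details (the rescaling argument for the converse and the cancellation of the symmetric factor $c(p,q)$) that the paper leaves implicit.
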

\begin{proof}
Using \eqref{eq:cr1},  the usual first order characterization of convex functions implies that, 
for all minimal geodesic  segments $\gamma:I\to {\cal C}$, the composition $ f\circ\gamma :I\to {\mathbb R}$ 
is convex if and  only if
$$
\left[ \left\langle {\rm J}\cdot D f(\gamma(t_2)), \gamma'(t_2) \right\rangle 
-\left\langle {\rm J}\cdot D f(\gamma(t_1)), \gamma'(t_1) \right\rangle  \right] (t_2-t_1)
\geq 0, \qquad \forall ~ t_2, t_1 \in I.
$$
Note that if  $\gamma:[0,1] \to {\cal C}$ is the segment  of   geodesic from $p=\gamma(0)$ to $q=\gamma(1)$,
then it  may be represented as $\gamma(t)=\exp_{p}t\log_{p}q$.  Moreover,  $\gamma'(0)=\log_{p}q$ and  $\gamma'(1)=-\log_{q}p$.
Therefore,   the first inequality of the proposition follows by combining  the previous inequality with  
Definition~\ref{def:cf-b} and \eqref{eq:expinv}.  
For concluding the proof, note that equations  \eqref{eq:grad} and \eqref{eq:expinv}   imply the
equivalence between the two inequalities of the lemma.

\end{proof}
\begin{proposition} \label{pr:sgmd2}
Let ${\cal C} \subseteq {{\mathbb H}^n}$ be an open hyperbolically convex set and  $f:{\cal C} \to {\mathbb R}$ be  a twice differentiable function. 
The function $f$ is hyperbolically convex if and only if the  Hessian $  \Hess f$ on the hyperbolic space   
satisfies the inequality $\left\langle \Hess f(p) v, v \right\rangle \geq 0$, for all  $p\in {\cal C}$  and all \, $v\in T_p{{\mathbb H}^n}$, 
or equivalently, 
$$
\left\langle {\rm J}\cdot D^2f(p)v, v \right\rangle + \langle {\rm J}\cdot Df(p), p\rangle\left\langle v, v \right\rangle\geq 0, 
\qquad \forall\, p\in {\cal C},~  \forall \, v\in T_p{{\mathbb H}^n},
$$
where $ D^2f(p)$ is the usual Hessian  and $Df(p)$ is the usual gradient of $f$ 
at a point $p\in {\cal C}$. If the above inequalities  are strict then $f$ is strictly hyperbolically convex. 
\end{proposition}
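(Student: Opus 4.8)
The plan is to reduce the statement, exactly as in the proofs of the two preceding propositions, to the classical second-order criterion for scalar convexity applied to the restrictions $f\circ\gamma$ along minimal geodesic segments, and then to read off everything from the expression \eqref{eq:cr2} for the second derivative of $f\circ\gamma$. By Definition~\ref{def:cf-b}, $f$ is hyperbolically convex if and only if for every minimal geodesic segment $\gamma:I\to{\cal C}$ the one-variable function $f\circ\gamma$ is convex; since $f$ is twice differentiable, this is equivalent, by the usual scalar criterion, to $(f\circ\gamma)''(t)\ge 0$ for all $t\in I$. Equation \eqref{eq:cr2} identifies this second derivative as $\langle {\rm Hess} f(\gamma(t))\gamma'(t),\gamma'(t)\rangle$ and simultaneously records its expanded coordinate form $\langle {\rm J}D^2f(\gamma(t))\gamma'(t),\gamma'(t)\rangle+\langle {\rm J}Df(\gamma(t)),\gamma(t)\rangle\langle\gamma'(t),\gamma'(t)\rangle$, so the two displayed inequalities of the proposition are literally the same quantity. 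This disposes of the ``equivalently'' clause with no extra computation.

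For the forward implication (convexity $\Rightarrow$ Hessian positive semidefinite), I would fix $p\in{\cal C}$ and $v\in T_p{{\mathbb H}^n}$ with $v\neq 0$, and use \eqref{eq:geoexp} to take the geodesic $\gamma(t)=\exp_p(tv)$, for which $\gamma(0)=p$ and $\gamma'(0)=v$. Because ${\cal C}$ is open, this geodesic stays in ${\cal C}$ for $t$ in some interval $(-\varepsilon,\varepsilon)$, and every geodesic segment of ${{\mathbb H}^n}$ is minimal; hence convexity of $f\circ\gamma$ gives $(f\circ\gamma)''(0)\ge 0$, and evaluating \eqref{eq:cr2} at $t=0$ yields $\langle {\rm Hess} f(p)v,v\rangle\ge 0$. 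Conversely, assuming $\langle {\rm Hess} f(p)v,v\rangle\ge 0$ for all $p\in{\cal C}$ and all $v\in T_p{{\mathbb H}^n}$, for an arbitrary minimal geodesic segment $\gamma$ I would set $v=\gamma'(t)\in T_{\gamma(t)}{{\mathbb H}^n}$ and read \eqref{eq:cr2} in the other direction to conclude $(f\circ\gamma)''(t)\ge 0$ for all $t$, so each $f\circ\gamma$ is convex and $f$ is hyperbolically convex.

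The strict statement follows by the same argument: positive definiteness of ${\rm Hess} f$ forces $(f\circ\gamma)''(t)>0$ whenever $\gamma'(t)\neq 0$, i.e.\ along every nonconstant minimal geodesic, so each such $f\circ\gamma$ is strictly convex and $f$ is strictly hyperbolically convex.

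The only step requiring genuine care is the forward direction, where I must guarantee that an \emph{arbitrary} pair $(p,v)$ is realized as $(\gamma(0),\gamma'(0))$ by an admissible (minimal, ${\cal C}$-valued) geodesic; this is precisely where the openness of ${\cal C}$ and the explicit formula \eqref{eq:geoexp} are used. Everything else is mechanical: the equivalence of the two inequality forms is built into \eqref{eq:cr2}, and no converse is asserted for the strict case, in accordance with the usual fact that strict convexity need not imply a positive definite Hessian.
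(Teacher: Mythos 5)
Your proposal is correct and takes essentially the same approach as the paper: both reduce hyperbolic convexity to the classical scalar second-order criterion for $f\circ\gamma$ via \eqref{eq:cr2} and Definition~\ref{def:cf-b}, with the equivalence of the two displayed inequalities being an immediate algebraic identity. If anything, your write-up is more careful than the paper's, which silently passes over the step you rightly flag as the only delicate one, namely realizing an arbitrary pair $(p,v)$ as $(\gamma(0),\gamma'(0))$ of an admissible geodesic via \eqref{eq:geoexp} and the openness of ${\cal C}$.
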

\begin{proof}
By using \eqref{eq:cr2}, the usual second order characterization of hyperbolically convex functions implies that, 
for all minimal geodesic  segment $\gamma:I\to {\cal C}$, the composition $ f\circ \gamma :I\to {\mathbb R}$ 
is convex if and  only if
$$
\left\langle {\rm J}\cdot D^2 f(\gamma(t))\gamma'(t), \gamma'(t) \right\rangle +  \left\langle {\rm J}\cdot D f(\gamma(t)), \gamma(t) \right\rangle\left\langle \gamma'(t), \gamma'(t) \right\rangle 
\geq 0, \qquad \forall ~ t\in I.
$$
If the last inequality is strict then $ f\circ \gamma$ is strictly convex. Therefore,
the result follows by combining  the above inequality with  Definition~\ref{def:cf-b}.  For concluding the proof, 
note that equation \eqref{eq:Hess} together with Remark~\ref{eq:sadj}  imply the equivalence between the two inequalities of the lemma.

\end{proof}
\begin{example} 
Fix $q\in {{\mathbb H}^n}$.  The function $d_{q}(\cdot) :{{\mathbb H}^n} \to {\mathbb R} $ is hyperbolically convex.
In general, taking a hyperbolically convex set ${\cal C}\subseteq  {{\mathbb H}^n}$, 
the function  $d_{q}(\cdot)  : {\cal C}\to {\mathbb R}$ is hyperbolically convex. 
Indeed,  the  hyperbolic convexity of  
$d_{q}(\cdot) $ follows by combining Lemma~\ref{le:spd} with  Proposition~\ref{pr:sgmd2}.
\end{example}
\begin{example}  
Fix $q \in {{\mathbb H}^n}$.  The function   $ \rho_q: {{\mathbb H}^n}: \to {\mathbb R} $ defined as $\rho_q(p):=\frac{1}{2}d_q^2(p)$
 is strictly hyperbolically convex.  In general, taking a hyperbolically convex set ${\cal C}\subseteq {{\mathbb H}^n}$, 
the function  $\rho_q  : {\cal C}\to {\mathbb R}$ is strictly hyperbolically convex. 
Indeed,    the result follows by combining Lemma~\ref{le:csqd} with  Proposition~\ref{pr:sgmd2}.
\end{example}
\begin{example} 
Take $\tilde{p}=(0,\cdots,0,1)\in {\mathbb R}^{n+1}$ and  the hyperbolically convex set 
${\cal C}=\{p\in {{\mathbb H}^n} :~  p^{1}> 0, \ldots, p^{n}> 0  \} $.   The function 
$\psi : {\cal C} \to {\mathbb R}$  defined by 
$\psi(p)=-\ln \left(-1-\langle \tilde{p}, p\rangle \right)$  is hyperbolically convex.  
Indeed,   considering that $Df(p)=-(1+\langle \tilde{p}, p\rangle)^{-1}{\rm J}{\tilde p}$ and
$D^2f(p)=(1+\langle \tilde{p}, p\rangle)^{-2}{\rm J}{\tilde p}{\tilde p}^\top{\rm J}$, the 
hyperbolical  convexity of  $\psi $ follows by combining  Lemma~\ref{le:fl} and  
Proposition~\ref{pr:sgmd2}.
\end{example}
\subsection{Hyperbolically Convex Quadratic Functions} \label{sec:hcqf}
In this section we study the hyperbolic convexity of the quadratic function  $f(p)=p^\top Ap$, for  $A=A^\top\in{\mathbb R}^{(n+1)\times(n+1)}$.  We begin with a general characterization.
\begin{corollary}\label{cor:lhc}
	Let $A=A^\top\in{\mathbb R}^{(n+1)\times(n+1)}$ and  $f:{\mathbb H}^n\to {\mathbb R}$ defined by
	$f(p)=p^\top Ap$. The function $f$ is hyperbolically convex if and 
	only if \[v^\top A v+p^\top A p\geq 0,\qquad\forall p,v\in{\mathbb R}^{n+1}
	\quad\mbox{with}\quad p^\top Jp=-1,
	\quad v^\top Jv=1,\quad p^\top Jv=0.\] 
\end{corollary}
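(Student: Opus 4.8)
The plan is to specialize the second-order characterization of hyperbolic convexity, namely Proposition~\ref{pr:sgmd2}, to the quadratic function $f(p)=p^\top A p$ and then simplify the resulting inequality. Since $f$ is a polynomial it is certainly twice differentiable on all of ${\mathbb H}^n$, so Proposition~\ref{pr:sgmd2} applies verbatim: $f$ is hyperbolically convex if and only if
\[
\left\langle {\rm J} D^2 f(p) v, v \right\rangle + \langle {\rm J} D f(p), p\rangle \left\langle v, v \right\rangle \geq 0, \qquad \forall\, p\in {\mathbb H}^n,\ \forall\, v\in T_p{\mathbb H}^n.
\]
First I would compute the Euclidean derivatives of $f$. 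Because $A$ is symmetric, $D f(p) = 2Ap$ and $D^2 f(p) = 2A$. The main content is then just substituting these into the displayed inequality and unwinding the definitions of the Lorentzian inner product via \eqref{eq:ipef}.

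Next I would translate every Lorentzian inner product into the matrix form $\langle x,y\rangle = x^\top {\rm J} y$. For the first term, $\langle {\rm J} D^2 f(p) v, v\rangle = \langle 2{\rm J} A v, v\rangle = 2 ({\rm J} A v)^\top {\rm J} v = 2 v^\top A^\top {\rm J}^\top {\rm J} v$; using ${\rm J}^\top = {\rm J}$ and ${\rm J}^2 = {\rm I}$ together with $A=A^\top$ this collapses to $2 v^\top A v$. Likewise $\langle {\rm J} D f(p), p\rangle = \langle 2{\rm J} A p, p\rangle = 2 ({\rm J} Ap)^\top {\rm J} p = 2 p^\top A p$ by the same manipulation, and $\langle v, v\rangle = v^\top {\rm J} v$. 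Thus the characterizing inequality becomes
\[
2 v^\top A v + 2 (p^\top A p)\, v^\top {\rm J} v \geq 0, \qquad \forall\, p\in {\mathbb H}^n,\ \forall\, v\in T_p{\mathbb H}^n.
\]

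To finish I would make the tangency and normalization conditions explicit. By the definition \eqref{eq:hs} of ${\mathbb H}^n$, a point $p\in {\mathbb H}^n$ satisfies $\langle p,p\rangle = p^\top {\rm J} p = -1$ (together with $p_{n+1}>0$), and $v\in T_p{\mathbb H}^n$ means $\langle p,v\rangle = p^\top {\rm J} v = 0$. Since $\langle\cdot,\cdot\rangle$ restricts to a positive inner product on $T_p{\mathbb H}^n$, any nonzero tangent vector can be scaled so that $\|v\|=1$, i.e. $v^\top {\rm J} v = 1$; as the inequality is homogeneous of degree two in $v$ and trivially holds for $v=0$, it suffices to check it on such unit vectors. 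Substituting $v^\top {\rm J} v = 1$ into the display above and dividing by the positive factor $2$ yields exactly $v^\top A v + p^\top A p \geq 0$, under the stated constraints $p^\top {\rm J} p = -1$, $v^\top {\rm J} v = 1$, $p^\top {\rm J} v = 0$. The one point requiring a small remark is that dropping the sign condition $p_{n+1}>0$ in passing from ${\mathbb H}^n$ to the constraint set in the corollary is harmless, because replacing $p$ by $-p$ changes neither $p^\top A p$ nor the constraints, so the two upper sheets give the same family of inequalities. I do not expect any genuine obstacle here; the proof is a direct computation, and the only mild subtlety is the bookkeeping with ${\rm J}$ and the homogeneity/normalization argument that reduces the tangent-space quantifier to unit vectors.
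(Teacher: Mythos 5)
Your proposal is correct and follows essentially the same route as the paper: specialize Proposition~\ref{pr:sgmd2} to $f(p)=p^\top Ap$ using $Df(p)=2Ap$, $D^2f(p)=2A$ and ${\rm J}^2={\rm I}$, then normalize to unit tangent vectors. In fact you are slightly more careful than the paper on two points it glosses over --- the degree-two homogeneity argument justifying the restriction to $v^\top {\rm J}v=1$, and the observation that dropping $p_{n+1}>0$ is harmless because $p\mapsto -p$ preserves both $p^\top Ap$ and the constraints.
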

\begin{proof}
Considering that $Df(p)=2Ap$,  $D^2f(p)=2A$ and  $ {\rm J} {\rm J}= {\rm I}$,  we conclude that 
$$
\left\langle {\rm J}\cdot D^2f(p)v, v \right\rangle + \langle {\rm J}\cdot Df(p), p\rangle\left\langle v, v \right\rangle=2 v^\top Av+ 2p^\top Ap\left\langle v, v \right\rangle.
$$
Thus,  it follows from Proposition~\ref{pr:sgmd2} that $f$ is hyperbolically convex in ${{\mathbb H}^n}$ if and only if
 $v^\top Av+ p^\top Ap \geq 0$, for all $p\in {{\mathbb H}^n}$,  all $v\in T_p{{\mathbb H}^n}$ such that  $v^\top Jv=1$. Considering that $v\in T_p{{\mathbb H}^n}$ with $p\in {{\mathbb H}^n}$  if and only if $v^\top Jv=1$ and $p^\top Jv=0$ with $p\in {{\mathbb H}^n}$, the result follows.
 
\end{proof}
Next, we use  the Lorentz group \eqref{eq:lg} to present some examples of
hyperbolically  convex quadratic functions. Before that, we need the following result. 
\begin{corollary}\label{cr:lhc}
Let $A=A^\top\in{\mathbb R}^{(n+1)\times(n+1)}$ and $Q\in G_{\cal L}$.  Then,  $f:{\mathbb H}^n\to {\mathbb R}$ defined by $f(p)=p^\top Ap$ is   hyperbolically convex if and only if $g:{\mathbb H}^n\to {\mathbb R}$ defined by  $g(q)= q^\top (Q^\top A Q)q $ is   hyperbolically convex.
\end{corollary}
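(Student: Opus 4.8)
The plan is to deduce this corollary directly from Proposition~\ref{pr:lgpcf}, applied to the special case ${\cal C}={{\mathbb H}^n}$. The key observation is that $g$ is precisely the composition $f\circ Q$ in the sense of that proposition: for every $q\in{{\mathbb H}^n}$ one has
\[
(f\circ Q)(q)=f(Qq)=(Qq)^\top A(Qq)=q^\top(Q^\top A Q)q=g(q),
\]
so that $g=f\circ Q$ as functions. Hence, once the domains are matched, the equivalence is immediate.

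The one point needing care is the domain. Proposition~\ref{pr:lgpcf} is stated with ${\cal D}:=\{Q^{-1}p:~p\in{\cal C}\}$, so taking ${\cal C}={{\mathbb H}^n}$ I must verify ${\cal D}={{\mathbb H}^n}$. This follows from Remark~\ref{re:lgpg}: since $Q^{-1}\in G_{\cal L}$ (recall $Q^{-1},Q^\top\in G_{\cal L}$ for all $Q\in G_{\cal L}$), the map $Q^{-1}$ carries ${{\mathbb H}^n}$ into ${{\mathbb H}^n}$, whence ${\cal D}\subseteq{{\mathbb H}^n}$; conversely, for any $r\in{{\mathbb H}^n}$ we have $Qr\in{{\mathbb H}^n}$ and $Q^{-1}(Qr)=r$, so $r\in{\cal D}$. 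Thus $Q$ restricts to a bijection of ${{\mathbb H}^n}$ onto itself and ${\cal D}={{\mathbb H}^n}$. Since ${{\mathbb H}^n}$ is itself hyperbolically convex, Proposition~\ref{pr:lgpcf} applies and yields that $f$ is hyperbolically convex on ${{\mathbb H}^n}$ if and only if $f\circ Q$ is hyperbolically convex on ${{\mathbb H}^n}$. Combining this with $g=f\circ Q$ finishes the argument.

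A more self-contained route would instead apply the characterization in Corollary~\ref{cor:lhc} directly, via the change of variables $p=Qq$, $w=Qv$. Using $Q^\top{\rm J}Q={\rm J}$ one checks that $(q,v)\mapsto(Qq,Qv)$ is a bijection between the constraint set $\{(q,v):q^\top{\rm J}q=-1,\ v^\top{\rm J}v=1,\ q^\top{\rm J}v=0\}$ and the corresponding set for $(p,w)$, while $v^\top(Q^\top A Q)v+q^\top(Q^\top A Q)q=w^\top A w+p^\top A p$; the characterizing inequality for $g$ is therefore equivalent to that for $f$. In either approach the only genuine (and minor) obstacle is confirming that elements of the Lorentz group preserve the upper sheet ${{\mathbb H}^n}$, so that the domains match and the substitution is a true bijection of ${{\mathbb H}^n}$ onto itself; this is exactly the content of Remark~\ref{re:lgpg}.
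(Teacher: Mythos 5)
Your proposal is correct and takes essentially the same route as the paper: the paper's entire proof is the observation that $g(q)=f(Qq)$ combined with an appeal to Proposition~\ref{pr:lgpcf}. Your verification that ${\cal D}=\{Q^{-1}p:~p\in{{\mathbb H}^n}\}={{\mathbb H}^n}$ (via $Q^{-1}\in G_{\cal L}$ and Remark~\ref{re:lgpg}) simply makes explicit a domain-matching step the paper leaves implicit, and the alternative argument via Corollary~\ref{cor:lhc} is a valid but unneeded extra.
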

\begin{proof}
 Since $g(q)=f(Qq)$, the equivalence  follows from Proposition~\ref{pr:lgpcf}.
 
\end{proof}
As an application of Corollaries ~\ref{cor:lhc}  and ~\ref{cr:lhc}  in the
following we present an example of a hyperbolically  convex quadratic function.
\begin{example}
Take a diagonal matrix $D\in {\mathbb R}^{(n+1)\times (n+1)}$ denoted by $D={\rm diag} (d_1, \ldots, d_n, d_{n+1})$.  Assume that $ d_{min} + d_{n+1}\geq 0$,  where $d_{min}:=\min\{d_1, \ldots, d_n\}$. Then,  for each  $Q\in G_{\cal L}$,  the  function $g:{\mathbb H}^n\to{\mathbb R}$ defined by $g(p)=p^\top Q^\top DQ p$ is   hyperbolically convex. Indeed,  take $q, u\in {\mathbb R}^{n+1}$ such that  $q^\top Jq=-1$, $u^\top Ju=1$  and $p^\top Jv=0$. Thus,  we have  $q_{n+1}^2=\sum_{i=1}^{n}q_i^2+1$ and $u_{n+1}^2=\sum_{i=1}^{n}u_i^2-1$. Hence,  since  $ d_{min} + d_{n+1}\geq 0$, we  obtain that 
\begin{equation*} 
 u^\top Du+ q^\top Dq=\sum_{i=1}^{n}(d_i+d_{n+1})u_i^2+\sum_{i=1}^{n}(d_i+d_{n+1})q_i^2\geq  (d_{min} + d_{n+1})\Big(\sum_{i=1}^{n}u_i^2+\sum_{i=1}^{n}q_i^2\Big)\geq 0.
\end{equation*}
Thus,  Corollary~\ref{cor:lhc} implies that  $f:{\mathbb H}^n\to{\mathbb R}$ defined by $f(p)=p^\top Dp$ is   hyperbolically convex. Therefore, applying Corollary~\ref{cr:lhc} we conclude that $g$ is   hyperbolically convex.
\end{example}
To continue  with our study of  hyperbolic convexity of  quadratic functions,  we
denote  the boundary of the Lorentz cone  \eqref{eq:lc} by 
$$
\partial{\cal L}:=\left\{x\in {\cal L}:~ x^\top Jx=0\right\}.
$$
In order to simplify notations, for a given $x\in{{\mathbb R}^{n+1}} $,  we
consider the following decomposition:
\begin{equation} \label{eq:dsv}
x=({\bar x}^\top, x_{n+1}) \in {{\mathbb R}^{n+1}},\qquad  {\bar x}:=\left(x_1,\dots,x_n\right)^\top\in{\mathbb R}^n,\quad  x_{n+1}\in{\mathbb R}.
\end{equation}
\begin{lemma} \label{le:ebc}
Let  $x,y\in \partial{\cal L}$. The following three statements are equivalent:
\begin{enumerate}
	\item[(i)] $x^\top Jy\ne 0$; 
         \item[(ii)] $y\ne\alpha x$, for all $\alpha\in {\mathbb R}$; 
         \item[(iii)] $x^\top Jy< 0$.
\end{enumerate}	
\end{lemma}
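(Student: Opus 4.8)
The plan is to reduce the whole statement to the ordinary Cauchy--Schwarz inequality applied to the spatial parts of $x$ and $y$. Using the decomposition \eqref{eq:dsv} together with the boundary condition $x^\top Jx=0$ and the fact that $x_{n+1}\ge 0$ (which holds because $x\in{\cal L}$, see \eqref{eq:lc}), I would first record that every $x\in\partial{\cal L}$ satisfies $x_{n+1}=\|\bar x\|$, where $\|\cdot\|$ is the Euclidean norm on ${\mathbb R}^n$. Hence for $x,y\in\partial{\cal L}$ one obtains the key identity
\[
x^\top Jy=\bar x^\top\bar y-x_{n+1}y_{n+1}=\bar x^\top\bar y-\|\bar x\|\,\|\bar y\|,
\]
and Cauchy--Schwarz immediately gives $x^\top Jy\le 0$. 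This single sign constraint is the engine of the proof: since $x^\top Jy$ is never positive, the conditions $x^\top Jy\ne 0$ and $x^\top Jy<0$ coincide, so (i) and (iii) are already equivalent. I would then close the equivalence by running the cycle (iii)$\Rightarrow$(i)$\Rightarrow$(ii)$\Rightarrow$(iii).

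The two easy legs of the cycle are immediate. The implication (iii)$\Rightarrow$(i) is trivial, since $x^\top Jy<0$ forces $x^\top Jy\ne 0$. For (i)$\Rightarrow$(ii) I would argue by contraposition: if $y=\alpha x$ for some $\alpha\in{\mathbb R}$, then $x^\top Jy=\alpha\,x^\top Jx=0$ because $x\in\partial{\cal L}$, so (i) fails.

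The crux is (ii)$\Rightarrow$(iii), which is exactly the equality analysis of Cauchy--Schwarz. Again by contraposition, suppose $x^\top Jy=0$, i.e. $\bar x^\top\bar y=\|\bar x\|\,\|\bar y\|$; assuming $x\ne 0$ (so $\bar x\ne 0$), the equality case forces $\bar y=\lambda\bar x$ with $\lambda=\|\bar y\|/\|\bar x\|\ge 0$. The step that needs care is lifting this proportionality of the spatial parts to proportionality of the full vectors: using the boundary condition once more, $y_{n+1}=\|\bar y\|=\lambda\|\bar x\|=\lambda x_{n+1}$, so $y=(\bar y^\top,y_{n+1})=\lambda(\bar x^\top,x_{n+1})=\lambda x$, whence (ii) fails. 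Thus (ii) yields $x^\top Jy<0$, completing the cycle.

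The main (and essentially only) obstacle is the degenerate apex case $x=0$, equivalently $\bar x=0$, in which the equality argument cannot produce a scalar multiple of $x$: here $x^\top Jy=0$ for every $y$, and the lifting step breaks down. I would treat this separately and note that the equivalence is meant for nonzero boundary vectors; under the hypothesis $x\ne 0$ (the relevant case in the sequel) the argument above is complete, while if $x=0$ one checks directly that (i) and (iii) both fail. Everything else is routine once the identity $x^\top Jy=\bar x^\top\bar y-\|\bar x\|\,\|\bar y\|$ and the Cauchy--Schwarz equality condition are in hand.
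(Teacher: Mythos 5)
Your proof is correct, and its engine is the same as the paper's: writing $x=({\bar x}^\top,x_{n+1})$ with $x_{n+1}=\sqrt{{\bar x}^\top{\bar x}}$ and running the Cauchy--Schwarz \emph{equality} analysis on the spatial parts to show that $x^\top Jy=0$ forces $y$ to be a nonnegative multiple of $x$ --- that is exactly how the paper proves (ii)$\Rightarrow$(i), including the lifting step $y_{n+1}=\|{\bar y}\|=\lambda\|{\bar x}\|=\lambda x_{n+1}$. The one place you genuinely diverge is the sign constraint. You get $x^\top Jy\le 0$ directly from the identity $x^\top Jy={\bar x}^\top{\bar y}-\|{\bar x}\|\,\|{\bar y}\|$ and the Cauchy--Schwarz inequality, whereas the paper derives it from convexity of the Lorentz cone: $x,y\in\partial{\cal L}$ gives $x+y\in{\cal L}$, hence $0\ge (x+y)^\top J(x+y)=2x^\top Jy$. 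Your route is more economical, since a single identity drives both the inequality and its equality case; the paper's argument is coordinate-free and would carry over to situations where no explicit spatial/temporal decomposition is at hand. Finally, your remark about the apex is a genuine catch rather than a pedantic one: for $x=0$ and $y\ne 0$ (both lie in $\partial{\cal L}$), statement (ii) holds while (i) and (iii) fail, so the lemma as literally stated needs $x,y\ne 0$. The paper's proof silently assumes this --- its appeal to the equality case of Cauchy's inequality requires a nonzero spatial part --- while you make the restriction explicit and verify the degenerate case separately.
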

\begin{proof}
First, we  prove (i) is equivalent to (ii). Assume  (i) holds.   If there exists
$\alpha\in{\mathbb R}$ such that $y=\alpha x$, then $x^\top Jy=x^\top J(\alpha
x)=\alpha x^\top Jx=0$, which contradicts $x^\top Jy\ne 0$. Hence, $y\ne\alpha
x$, for all $\alpha\in{\mathbb R}$, and (ii)  holds.  For the converse, assume
(ii) holds.  By contradiction, assume that  $x^\top Jy=0$.  Since  $y,
z\in\partial{\cal L}$, by using the notation introduced in \eqref{eq:dsv},  we
have  $y^{n+1}= \sqrt{{\bar y}^\top {\bar y}}$ and $z_{n+1}= \sqrt{{\bar z}^\top
{\bar z}}$. Thus,  due to $y^Tz=0$, we have ${\bar y}^T{\bar z}=- y^{n+1}
z_{n+1}$, or equivalently
$$
{\bar y}^\top {\bar z}=  -\sqrt{{\bar y}^\top {\bar y}}  \sqrt{{\bar z}^\top {\bar z}}.
$$
Hence, Cauchy's inequality implies that  there exists  a $\alpha\ge 0$ such that  ${\bar y}=-\alpha {\bar z}$. Furthermore,  
$$
y^{n+1}= \sqrt{{\bar y}^\top {\bar y}} = \alpha\sqrt{{\bar z}^\top {\bar z}} , 
$$
 which gives  $y^{n+1}=\alpha z_{n+1}$. Thus, we conclude that $y=-\alpha Jz$, which implies $y=\alpha x$ and we have a contradiction. Therefore,  $x^\top Jy\neq 0$ and  (i) holds. 

Now, we  prove (i) is equivalent to (iii). Assume  (i) holds.  Since ${\cal L}$
is a closed and  convex cone,  and  $x,y\in \partial{\cal L}\subseteq  {\cal L}
$, we have $x+y \in  {\cal L}$.  Thus, 
$$
0\geq (x+y)^\top J (x+y)= x^\top J x+2x^\top Jy+ y^\top Jy=2x^\top Jy.
$$
 Hence, $x^\top Jy\ne 0$ implies  $x^\top Jy< 0$. Therefore, the item  (iii) holds. Conversely, (iii) implies (i) is immediate,  which concludes the proof. 
 
\end{proof}
If we make some transformations in Corollary~\ref{cor:lhc} , we will obtain the following result.
\begin{lemma} \label{le:escc}
Let $A=A^\top\in{\mathbb R}^{(n+1)\times(n+1)}$. The following three conditions are equivalent:
\begin{enumerate}
	\item[(i)] The  function $f:{\mathbb H}^n\to {\mathbb R}$ defined by $f(p)=p^\top Ap$  is hyperbolically convex; 
         \item[(ii)]  $x^\top A x+y^\top A y\geq 0$, for all $x,y\in{\mathbb R}^{n+1}$  with $x,y\in \partial{\cal L}$ and $x^\top Jy=-1$; 
         \item[(iii)]  $z^\top A z+w^\top A w\geq 0$, for all $z,w\in{\mathbb R}^{n+1}$  with $z,w\in \partial{\cal L}$ and $z^\top Jw<0$.	
\end{enumerate}
\end{lemma}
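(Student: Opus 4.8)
The plan is to deduce all three equivalences from Corollary~\ref{cor:lhc}, which already characterizes (i) as the requirement that $p^\top Ap+v^\top Av\geq0$ whenever $p^\top Jp=-1$, $v^\top Jv=1$ and $p^\top Jv=0$. The device that turns this ``one point on $\mathbb H^n$ plus one unit tangent direction'' condition into the ``two null vectors'' conditions (ii) and (iii) is the linear change of variables
\[
x=\frac{p+v}{\sqrt2},\quad y=\frac{p-v}{\sqrt2},\qquad p=\frac{x+y}{\sqrt2},\quad v=\frac{x-y}{\sqrt2},
\]
an orthogonal (indeed $45^\circ$) transformation of the pair $(p,v)$. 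First I would record the quadratic-form identities it produces; using $J^\top=J$,
\begin{gather*}
x^\top Jx=\tfrac12\bigl(p^\top Jp+2p^\top Jv+v^\top Jv\bigr),\quad y^\top Jy=\tfrac12\bigl(p^\top Jp-2p^\top Jv+v^\top Jv\bigr),\\
x^\top Jy=\tfrac12\bigl(p^\top Jp-v^\top Jv\bigr),
\end{gather*}
so that the constraint triple $\{p^\top Jp=-1,\ v^\top Jv=1,\ p^\top Jv=0\}$ is equivalent to $\{x^\top Jx=0,\ y^\top Jy=0,\ x^\top Jy=-1\}$. Running the same expansion with $A$ in place of $J$ and using $A=A^\top$, the cross terms $\pm p^\top Av$ cancel and yield the key identity $x^\top Ax+y^\top Ay=p^\top Ap+v^\top Av$. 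Thus the inequality tested in Corollary~\ref{cor:lhc} becomes verbatim the inequality in (ii), provided the cone membership $x,y\in\partial{\cal L}$ is accounted for.

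The one delicate point is that the constraints force $x,y$ to be null ($x^\top Jx=y^\top Jy=0$) but not automatically to lie in $\partial{\cal L}$, which also demands $x_{n+1},y_{n+1}\geq0$. Here I would note that a nonzero null vector lies in $\partial{\cal L}$ or in $-\partial{\cal L}$ according to the sign of its last coordinate, and that $x^\top Jy=-1$ forces $x_{n+1}$ and $y_{n+1}$ to share the same nonzero sign: writing $x=(\bar x^\top,x_{n+1})$, $y=(\bar y^\top,y_{n+1})$ as in \eqref{eq:dsv}, nullity gives $x_{n+1}^2=\bar x^\top\bar x$ and $y_{n+1}^2=\bar y^\top\bar y$, and if $x_{n+1}y_{n+1}\leq0$ then Cauchy's inequality forces $x^\top Jy=\bar x^\top\bar y-x_{n+1}y_{n+1}\geq0$, contradicting $x^\top Jy=-1$. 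Consequently, after replacing $(x,y)$ by $(-x,-y)$ if necessary---a move preserving both $x^\top Jy$ and $x^\top Ax+y^\top Ay$---one may assume $x,y\in\partial{\cal L}$. Feeding the substitution into Corollary~\ref{cor:lhc} forwards proves (i)$\Rightarrow$(ii) (here $x,y\in\partial{\cal L}$ are given and the associated $p=(x+y)/\sqrt2$ has $p_{n+1}>0$), and backwards proves (ii)$\Rightarrow$(i). This change of variables, together with the sign bookkeeping, is the genuinely substantive part of the argument and the step I expect to be the main obstacle.

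For (ii)$\Leftrightarrow$(iii) the work is light. The implication (iii)$\Rightarrow$(ii) is immediate, since any pair with $x^\top Jy=-1$ satisfies $x^\top Jy<0$ and hence is admissible in (iii). For (ii)$\Rightarrow$(iii), given $z,w\in\partial{\cal L}$ with $c:=-z^\top Jw>0$, I would rescale by setting $x=z/\sqrt c$ and $y=w/\sqrt c$; since $\partial{\cal L}$ is a cone these remain in $\partial{\cal L}$, and $x^\top Jy=z^\top Jw/c=-1$, so (ii) gives $0\leq x^\top Ax+y^\top Ay=(z^\top Az+w^\top Aw)/c$, whence $z^\top Az+w^\top Aw\geq0$ because $c>0$. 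That $z^\top Jw<0$ is the correct hypothesis rather than merely $z^\top Jw\neq0$ is exactly the content of Lemma~\ref{le:ebc} for vectors of $\partial{\cal L}$. Everything here is positive homogeneity of degree two and a trivial inclusion of hypotheses, so the only real content remains the substitution of the preceding paragraphs.
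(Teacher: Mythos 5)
Your proposal is correct and follows essentially the same route as the paper: the identical $45^\circ$ change of variables $x=(p+v)/\sqrt2$, $y=(p-v)/\sqrt2$ with the same quadratic-form identities and the key cancellation $x^\top Ax+y^\top Ay=p^\top Ap+v^\top Av$, plus the same rescaling by $\sqrt{|z^\top Jw|}$ for (ii)$\Leftrightarrow$(iii). If anything, your sign analysis (showing $x^\top Jy=-1$ forces $x_{n+1}$ and $y_{n+1}$ to share the same sign, so a simultaneous flip suffices) is more careful than the paper's parenthetical ``$x$ (or $-x$) $\in\partial{\cal L}$'' remark.
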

\begin{proof}
First we prove the equivalence  between (i) and (ii). For that   it is convenient to consider the following  invertible transformations 
\begin{equation} \label{eq:cofv}
p=\frac{1}{\sqrt{2}}(x+y),\quad v=\frac{1}{\sqrt{2}}(x-y)  \quad \mbox{if and  only if} \quad   x=\frac{1}{\sqrt{2}}(p+v), \quad y=\frac{1}{\sqrt{2}}(p-v), 
\end{equation}
where $x,y, p, v\in{\mathbb R}^{n+1}$.  By using the first two equalities in \eqref{eq:cofv},  after  some calculations, we have 
\begin{align}
  2p^\top Jp&=x^\top Jx+2x^\top Jy+y^\top Jy,\notag\\
2v^\top Jv&=x^\top Jx-2x^\top Jy+y^\top Jy,  \label{eq:cofvc1} \\
2p^\top Jv&=x^\top Jx-y^\top Jy.\notag
\end{align}
On the other hand,   by using the last two inequalities in   \eqref{eq:cofv} we obtain the following three  equalities  
\begin{align} 
  2x^\top Jx&=p^\top Jp+2p^\top Jv+v^\top Jv,\notag \\
 2y^\top Jy&=p^\top Jp-2p^\top Jv+v^\top Jv, \label{eq:cofvc2} \\
  2x^\top Jy&=p^\top Jp-v^\top Jv. \notag
\end{align}
Moreover, the equalities in \eqref{eq:cofv} also  imply that 
\begin{equation} \label{eq:cofvc3}
v^\top A v+p^\top Ap=x^\top A x+y^\top A y.
\end{equation}
First we prove (i) implies (ii). Take $x,y\in \partial{\cal L}$  and $x^\top Jy=-1$, and consider the transformation \eqref{eq:cofv}. Thus, by using \eqref{eq:cofvc1}, we conclude that  $p^\top Jp=-1$, $v^\top Jv=1$ and  $p^\top Jv=0$.  Hence, item~(i) together with Corollary~\ref{cor:lhc} implies that $v^\top A v+p^\top Ap\geq 0$. Therefore, by using \eqref{eq:cofvc3}, we conclude that $x^\top A x+y^\top A y\geq 0$ and  item~(ii) holds.

Next we prove that (ii) implies (i). Assume that the  item~(ii) holds, and  take $p,v\in{\mathbb R}^{n+1}$  with $p^\top Jp=-1$, $v^\top Jv=1$ and  $p^\top Jv=0$,  and consider  \eqref{eq:cofv}. Hence, by using \eqref{eq:cofvc2} we have $x ~(\mbox{or}-x)\in\partial{\cal L}$, $y~(\mbox{or}-y)\in \partial{\cal L}$  and $x^\top Jy=-1$, and item~(ii) implies that  $x^\top A x+y^\top A y\geq 0$. Thus, \eqref{eq:cofvc3} implies that $v^\top A v+p^\top Ap\geq 0$, which implies that   item~(i) holds. 

We proceed to prove the equivalence  between (ii) and (iii). Assume that item~(ii) holds and take  $z,w\in \partial{\cal L}$ and $z^\top Jw<0$.  Since $z^\top Jw<0$ we define 
\begin{equation} \label{eq;sie}
x=\frac{z}{\sqrt{-z^\top Jw}},\qquad y=\frac{w}{\sqrt{-z^\top Jw}}.
\end{equation} 
Thus, considering that  $z,w\in \partial{\cal L}$ and $z^\top Jw<0$, some calculations show that $x,y\in \partial{\cal L}$ and $x^\top Jy=-1$. Therefore, using  \eqref{eq;sie}  together  item~(ii), we conclude that
$$
z^\top A z+w^\top A w= -z^\top Jw\left(x^\top A x+y^\top A y\right)\geq 0, 
$$
and item~(iii) holds. Finally,  (iii) implies  (ii) is immediate, which  concludes the proof. 

\end{proof} 
In the following theorem we present a characterization  for hyperbolically convex
quadratic functions in term of the matrix defining it. In particular,  we show
that the study of hyperbolically convex quadratic functions reduces to the study
of their behavior on  the boundary of the Lorentz cone.
\begin{theorem}\label{th:pc}
Let $A=A^\top\in{\mathbb R}^{(n+1)\times(n+1)}$. The following four  conditions are equivalent:
\begin{enumerate}
	\item[(i)] The quadratic  function   $f: {{\mathbb H}^n} \to {\mathbb R} $ defined by $f(p):=p^\top Ap$  is  hyperbolically convex; 
	\item[(ii)] The matrix $A$ is $\partial{\cal L}$-copositive, i.e., $x^\top Ax \ge0$ for all $x\in \partial{\cal L}$;
	\item[(iii)] There exists $\alpha\in {\mathbb R}$ such that   $A +\alpha{\rm J}$ is positive semidefinite;
	\item[(iv)] The function $f$ is bounded from below, 
		i.e.,  there exist   an  $\alpha \in {\mathbb R}$ such that $f(p)\geq \alpha$, for all $p\in {{\mathbb H}^n} $.
\end{enumerate} 
\end{theorem}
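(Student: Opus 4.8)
The plan is to establish the two equivalences (i)$\Leftrightarrow$(ii) and (ii)$\Leftrightarrow$(iii), which together yield the three-way equivalence. The first rests on Lemma~\ref{le:escc}, which has already recast hyperbolic convexity of $f$ as an \emph{additive} copositivity condition on the cone boundary $\partial{\cal L}$; the second rests on Finsler's Lemma~\ref{le:Finslert}, the device that converts a sign condition on the null cone of ${\rm J}$ into a semidefiniteness statement. I would treat (ii)$\Rightarrow$(i) and (iii)$\Rightarrow$(ii) as the short directions and concentrate on (i)$\Rightarrow$(ii) and (ii)$\Rightarrow$(iii).

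For (ii)$\Rightarrow$(i): if $x^\top Ax\ge0$ for every $x\in\partial{\cal L}$, then $x^\top Ax+y^\top Ay\ge0$ for all $x,y\in\partial{\cal L}$, so condition (ii) of Lemma~\ref{le:escc} holds and $f$ is hyperbolically convex. For the converse (i)$\Rightarrow$(ii), fix $x\in\partial{\cal L}$ with $x\ne0$ and take the reflected vector $y:=(-{\bar x}^\top,x_{n+1})^\top$, which again lies in $\partial{\cal L}$ and satisfies $x^\top{\rm J}y=-2x_{n+1}^2<0$, so that $x,y$ are admissible for condition (iii) of Lemma~\ref{le:escc} (cf.\ Lemma~\ref{le:ebc}). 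Since $\partial{\cal L}$ is a cone and $x^\top{\rm J}y$ is unchanged under the rescaling $x\mapsto tx$, $y\mapsto t^{-1}y$, applying Lemma~\ref{le:escc}(iii) to the pair $tx,\,t^{-1}y$ gives
\[
t^{2}\,x^\top Ax+t^{-2}\,y^\top Ay\ge0,\qquad\forall\,t>0.
\]
Letting $t\to+\infty$ forces $x^\top Ax\ge0$; the case $x=0$ is trivial, so (ii) follows.

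The implication (iii)$\Rightarrow$(ii) is immediate: if $A+\alpha{\rm J}$ is positive semidefinite, then for $x\in\partial{\cal L}$ we have $x^\top{\rm J}x=0$, hence $x^\top Ax=x^\top(A+\alpha{\rm J})x\ge0$. The real content is (ii)$\Rightarrow$(iii). First I would upgrade (ii) to the entire null cone: because $x^\top Ax$ is even in $x$ and $\{x:\,x^\top{\rm J}x=0\}=\partial{\cal L}\cup(-\partial{\cal L})$, copositivity on $\partial{\cal L}$ yields $x^\top Ax\ge0$ whenever $x^\top{\rm J}x=0$. This inequality is not strict, so Finsler's lemma does not apply as stated. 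I would therefore \emph{strictify} by a perturbation: for $\epsilon>0$ set $A_\epsilon:=A+\epsilon{\rm I}$, so that for every $x\ne0$ with $x^\top{\rm J}x=0$ one has $x^\top A_\epsilon x=x^\top Ax+\epsilon\|x\|^2>0$. Lemma~\ref{le:Finslert}, applied with $M=A_\epsilon$ and $N={\rm J}$, then supplies $\lambda_\epsilon\in{\mathbb R}$ such that $A+\epsilon{\rm I}+\lambda_\epsilon{\rm J}$ is positive definite.

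The main obstacle is the passage $\epsilon\to0^+$, which requires an a priori bound on the multipliers $\lambda_\epsilon$. Here I would apply Lemma~\ref{le:aspd} to the positive definite matrix $A+\epsilon{\rm I}+\lambda_\epsilon{\rm J}$ written in the block form \eqref{eq:notf}: its leading $n\times n$ block ${\bar A}+(\epsilon+\lambda_\epsilon){\rm I}$ must be positive definite and its $(n+1,n+1)$ entry $c+\epsilon-\lambda_\epsilon$ must be positive, where ${\bar A}$ and $c$ denote the corresponding blocks of $A$. These two conditions give
\[
-\lambda_{\min}({\bar A})-\epsilon<\lambda_\epsilon<c+\epsilon,
\]
so $\{\lambda_\epsilon\}_{0<\epsilon\le1}$ is bounded. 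Extracting a subsequence $\epsilon_k\to0$ with $\lambda_{\epsilon_k}\to\alpha$ and passing to the limit, the positive definite matrices $A+\epsilon_k{\rm I}+\lambda_{\epsilon_k}{\rm J}$ converge to $A+\alpha{\rm J}$, which is therefore positive semidefinite; this is (iii). The only genuinely delicate ingredients are this strictification-plus-compactness device and the block estimate that keeps $\lambda_\epsilon$ bounded; every other step is a direct reading of the cited lemmas.
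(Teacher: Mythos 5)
Your proof is correct, and its skeleton matches the paper's: both directions (i)$\Rightarrow$(ii) rest on the scaling freedom in Lemma~\ref{le:escc}(iii), and both proofs of (ii)$\Rightarrow$(iii) strictify with a multiple of ${\rm I}$, invoke Finsler's Lemma~\ref{le:Finslert}, and pass to the limit along a convergent subsequence of multipliers. The differences are worth recording. For (i)$\Rightarrow$(ii) you choose the explicit reflected vector $y=(-\bar x^\top,x_{n+1})^\top$ and rescale the pair as $(tx,t^{-1}y)$, whereas the paper takes an arbitrary $y\in\partial{\cal L}$ not parallel to $x$, deduces $x^\top{\rm J}y<0$ from Lemma~\ref{le:ebc}, and sends $y^k=(1/k)y\to 0$; this is the same idea, though your explicit $y$ bypasses Lemma~\ref{le:ebc} entirely. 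Your logical cycle also differs: you close the loop with the trivial implication (iii)$\Rightarrow$(ii), while the paper proves (iii)$\Rightarrow$(i) directly via Corollary~\ref{cor:lhc}, using the cancellation coming from $v^\top{\rm J}v=1$ and $p^\top{\rm J}p=-1$; both are legitimate. The genuine divergence is the a priori bound on the Finsler multipliers, which is the crux of (ii)$\Rightarrow$(iii). The paper argues by contradiction: assuming $(\alpha_k)$ unbounded, it divides the positive-definiteness inequality by $\alpha_k$ and lets $k\to\infty$ to reach the absurd conclusion that $x^\top{\rm J}x\geq 0$ for all $x$ (an argument that needs a little care with the sign of $\alpha_k$ when $\alpha_k\to-\infty$, since the inequality then reverses). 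You instead apply Lemma~\ref{le:aspd} to the block form \eqref{eq:notf} of $A+\epsilon{\rm I}+\lambda_\epsilon{\rm J}$ and obtain the explicit two-sided bound $-\lambda_{\min}({\bar A})-\epsilon<\lambda_\epsilon<\sigma+\epsilon$, where $\sigma$ (your $c$) is the corner entry of $A$ in the decomposition \eqref{eq:not}. Your version is cleaner and quantitative: it localizes the multiplier in essentially the interval $\left(-\lambda_{\min}({\bar A}),\sigma\right)$ that reappears later in Proposition~\ref{pr:tccf}(iii)--(iv), at the mild cost of invoking the block decomposition, which the paper's contradiction argument avoids.
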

\begin{proof}
We first prove that  (i) implies  (ii). Assume that $f$ is
hyperbolically convex. Let   $x\in \partial{\cal L}$  such that $x\neq 0$. Take
$y\in \partial{\cal L}$ not parallel to $x$, i. e., such that  $y\ne\alpha x$,
for all $\alpha\in {\mathbb R}$.  Define  the sequence $\lf(y^k\rg)_{k\in \mathbb N}$,
where $y^k:=(1/k)y$, for all $k\in \mathbb N$. Since  $y\in \partial{\cal L}$ and
$y\ne\alpha x$ for all $\alpha\in {\mathbb R}$,  we also have  $y^k\in
\partial{\cal L}$  and $y^k\ne\alpha x$, for all $\alpha\in {\mathbb R}$ and all
$k\in \mathbb N$. Thus, by using Lemma~\ref{le:ebc}, we conclude that   $x^\top
Jy^k<0$, for all $k\in \mathbb N$. Hence, considering that  $ x, y^k\in
\partial{\cal L}$ and  $x^\top Jy^k<0$, for all $k\in \mathbb N$, and $f$ is
hyperbolically convex, by  using Lemma~\ref{le:escc}, we  obtain that  $x^\top
Ax+ \lf(y^k\rg)^\top Ay^k\ge 0$, for all $k\in \mathbb N$.  Therefore,  owing to $y^k:=(1/k)y$,  for all $k\in \mathbb N$, we have
$$
x^\top Ax+\frac{1}{k^2} y^\top Ay\ge 0,  \qquad k\in \mathbb N.
$$
Therefore,  taking the limit  in the latter inequality, we obtain that $x^\top Ax \ge0$. In conclusion, $A$ is $\partial{\cal L}$-copositive.

Next we prove that  (ii) implies  (iii). Assume that  $A$ is $\partial{\cal
L}$-copositive. Thus,  for all $x\in {\mathbb R}^{n+1}$ with   $x^\top Jx=0$ we
have $x^\top Ax\ge0$. Consequently,   for all $x\in {\mathbb R}^{n+1}$ with
$x^\top Jx=0$ and $x\neq0$  we have  $x^\top (A+(1/k){\rm I})x>0$, for all $k\in
{\mathbb N}$. Hence, by Lemma~\ref{le:Finslert}, there exists $\alpha_k\in
{\mathbb R}$ such that $A+(1/k){\rm I} +\alpha_k{\rm J}$ is positive definite,
for all $k\in {\mathbb N}$.  We claim that the  sequence $(\alpha_k)_{k\in
{\mathbb N}}$ is  bounded. Indeed,  assume  by absurd  that   $(\alpha_k)_{k\in
{\mathbb N}}$ is  unbounded. Since  $A+(1/k){\rm I} +\alpha_k{\rm J}$ is positive
definite, for  each  $x\in {\mathbb R}^{n+1}$ with $x\neq0$ we conclude that 
$$
\frac{1}{\alpha_k}x^\top Ax+\frac{1}{k \alpha_k}x^\top x+x^\top{\rm J}x> 0, 
$$
for all $k\geq {\bar k}$ and some ${\bar k}\in {\mathbb N}$.  Hence, by taking the limit in the last inequality as $k$ goes to infinity, we conclude that $x^\top{\rm J}x\geq 0$ for all   $x\in {\mathbb R}^{n+1}$, which is absurd. Therefore, the claim is proved. Since the  sequence $(\alpha_k)_{k\in {\mathbb N}}$ is  bounded, we can  take a subsequence  $(\alpha_{k_j})_{j\in {\mathbb N}}$  and $\alpha\in {\mathbb R}$  such that  $\lim_{j\to \infty}\alpha_{k_j}=\alpha$. On the other hand, considering that  $A+(1/k){\rm I} +\alpha_k{\rm J}$ is positive definite for all $k\in {\mathbb N}$, we have 
$$
x^\top \left(A+\frac{1}{k_j}{\rm I}+{\alpha_{k_j}}{\rm J}\right)x> 0, 
$$
for all  $x\in {\mathbb R}^{n+1}$ such that $x\neq0$. Therefore, by taking the limit in the last
inequality as $k$ goes to infinity,  we have
$x^\top(A+ \alpha{\rm J})x\ge 0$, for all  $x\in {\mathbb R}^{n+1}$, which implies that  $A +\alpha{\rm J}$ is positive semidefinite.

Next we prove that  (iii) implies  (i).  
Assume that there  is an
$\alpha\in {\mathbb R}$ such that   $A +\alpha{\rm J}$ is positive semidefinite. Due to  $A
+\alpha{\rm J}$  being  positive semidefinite and $ v^\top Av+ p^\top Ap=  v^\top Av+\alpha+ p^\top Ap-\alpha$, some calculations show
\begin{equation} \label{eq:fiic}
 v^\top Av+ p^\top Ap=   v^\top (A+\alpha{\rm J}) v+ p^\top (A+\alpha {\rm J})p\geq 0, 
\end{equation}
for all $p,v\in{\mathbb R}^{n+1}$ with $p^\top Jp=-1$,  $v^\top Jv=1$ and
$p^\top Jv=0$.  Therefore,  by using \eqref{eq:fiic} and Corollary~\ref{cor:lhc}, we conclude that $f$ is hyperbolically convex.

Next we prove that  (iii) implies  (iv).  Assume that there exists $\alpha\in {\mathbb R}$ such that    
$A +\alpha{\rm J}$ is positive semidefinite. Hence,   $p^\top Ap +\alpha
p^\top{\rm J}p=p^\top(A +\alpha{\rm J})p\geq 0$, for all $p\in {{\mathbb R}^n}$.
Since $p\in {{\mathbb H}^n}$ implies $p^\top{\rm J}p=-1$,  we conclude that  $p^\top Ap \geq \alpha$, for all $p\in {{\mathbb H}^n} $. Therefore,  $f$ is bounded from below on ${{\mathbb H}^n}$.  

Finally, to conclude the proof, we prove that (iv) imply (iii). Assume that $f$ is bounded from below on ${{\mathbb H}^n}$. Thus, there exists an $\alpha \in {{\mathbb R}}$ such that 
$f(p)\geq \alpha$, for all $p\in {{\mathbb H}^n} $, or equivalently, 
\begin{equation} \label{eq:qfem}
p^T(A+\alpha J)p\geq 0, \qquad  \forall p\in {{\mathbb H}^n}.
\end{equation} 
 In order to apply Lemma~\ref{eq:slemma}, we will first prove the statement: if ${x}^{\top}{\rm J}{x}\leq 0$, then  ${x}^{\top}(A+\alpha J){x}\geq 0$. Let $x \in {{\mathbb R}^n}$ such that 
 ${x}^{\top}{\rm J}{x}\leq 0$. Take a sequence $(x_k)_{k\in {\mathbb N}} \in {{\mathbb R}^n}$ such that $\lim_{k\to +\infty}x_k=x$ and  ${x_k}^{\top}{\rm J}{x_k}< 0$. Define
\begin{equation} \label{eq:qfeme}
y^k=\frac{x_k}{\sqrt{-{x_k}^TJ{x_k}}}, \qquad k\in\N.
\end{equation} 
Since  ${x_k}^{\top}{\rm J}{x_k}< 0$, by using \eqref{eq:qfeme}, we conclude that
${y^k}^{\top}{\rm J}{y^k}=-1$. Thus,  $y^k\in {{\mathbb H}^n}$ and
\eqref{eq:qfem}  implies  that  $\lf(y^k\rg)^{\top}(A+\alpha J){y^k}\geq 0$. Using,
again \eqref{eq:qfeme} we obtain that  ${x_k}^{\top}(A+\alpha J){x_k}\geq 0$, for
all $k\in\N.$ Hence, by letting $k\to\infty$, we conclude that ${x}^{\top}(A+\alpha
J){x}\geq 0$, which proves the statement. Therefore, after applying
Lemma~\ref{eq:slemma}, we conclude that there exists a $\beta \in {\mathbb R}$ such that $A+(\alpha+\beta){\rm J}$ is positive definite. Hence, by Theorem~\ref{th:pc}, the function $f$ is hyperbolically convex.

\end{proof}
\begin{example}  
Let  $A \in {\mathbb R}^{(n+1)\times(n+1)}$ be a positive semidefinite matrix and $\alpha\in {\mathbb R}$.
Since   $ A= (A+\alpha{\rm J}) - \alpha{\rm J}$ is a positive semidefinite
matrix, by applying Theorem~\ref{th:pc}, we conclude that the function   $f_{\alpha}: {{\mathbb H}^n}: \to {\mathbb R} $ defined by $f_{\alpha}(p):=p^\top (A+\alpha{\rm J})p$  is  hyperbolically convex. 
\end{example}
For simplifying the statement and proof   of  the next results it is convenient to introduce the following notation. For a given  $A\in{\mathbb R}^{(n+1)\times(n+1)}$,  consider the following decomposition:
\begin{equation}\label{eq:not}
	A:=\left(
	\begin{matrix}
	{\bar A} & {a}\\
	{a}^\top & \sigma
	\end{matrix}
	\right),  \qquad   {\bar A}\in {\mathbb R}^{n\times n}, \quad {a}\in {\mathbb R}^{n\times 1}, \quad \sigma \in {\mathbb R} 
\end{equation} 
and denote by ${\bar I}\in{\mathbb R}^{n\times n}$ is  the identity matrix. For any $\alpha\in {\mathbb R}$, the decomposition \eqref{eq:not} yields 
\begin{equation} \label{eq:delt}
	        A+ \alpha J=\left(\begin{matrix}{\bar A}+\alpha \overline{I} &  a\\
	          {a}^\top  & \sigma -  \alpha \end{matrix}\right).
        \end{equation}
\begin{proposition} \label{pr:tccf}
Let  $A \in {\mathbb R}^{{(n+1)}\times {(n+1)}}$ be a symmetric matrix and  $\alpha\in {\mathbb R}$.   Consider  the decomposition \eqref{eq:not} and the  following statements: 
\begin{enumerate}
\item[(i)] The quadratic  function   $f: {{\mathbb H}^n} \to {\mathbb R} $ defined by $f(p):=p^\top Ap$  is  hyperbolically convex; 
\item[(ii)] The matrix $A +\alpha{\rm J}$ is positive definite; 
\item[(iii)]  The number $\alpha\in (-\lambda_{\min}({\bar A}), \sigma)$ and $\det(A+\alpha J)>0$;
\item[(iv)] The number $\alpha\in (-\lambda_{\min}({\bar A}), \sigma)$ and ${\sigma}-\alpha-{a}^\top \left({\bar A}+\alpha{\bar I}\right)^{-1}{a}>0$.
\end{enumerate}
Then    (ii), (iii)  and  (iv) are equivalent and any of them implies item~(i).
\end{proposition}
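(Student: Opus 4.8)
The plan is to set $M := A + \alpha{\rm J}$ and read off its block structure in the form \eqref{eq:notf}. From \eqref{eq:not} and ${\rm J}={\rm diag}(1,\ldots,1,-1)$ one gets
\[
M = A + \alpha {\rm J} = \begin{pmatrix} \bar A + \alpha\bar I & a \\ a^\top & \sigma - \alpha \end{pmatrix},
\]
so that in the notation of \eqref{eq:notf} we have $\bar M = \bar A + \alpha\bar I$, $b = a$ and $\theta = \sigma - \alpha$. With this identification each statement about $M$ becomes a statement about these blocks, and the lemmas of Subsection~\ref{sec:int.01} apply directly. I would first dispose of ``(ii) implies (i)'': condition (ii) exhibits a concrete $\alpha$ for which $A+\alpha{\rm J}$ is positive semidefinite, which is exactly condition (iii) of Theorem~\ref{th:pc}, whence $f$ is hyperbolically convex. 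Since (ii)--(iv) are to be shown equivalent, each then implies (i), and the remaining work is the equivalence (ii)\,$\Leftrightarrow$\,(iii)\,$\Leftrightarrow$\,(iv).

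For (iii)\,$\Leftrightarrow$\,(iv) I would work on the common hypothesis $\alpha \in (-\lambda_{\min}(\bar A),\sigma)$. This forces $\lambda_{\min}(\bar A + \alpha\bar I)=\lambda_{\min}(\bar A)+\alpha>0$, so $\bar M=\bar A+\alpha\bar I$ is positive definite, in particular invertible with $\det(\bar A+\alpha\bar I)>0$. Lemma~\ref{le:sscdet} then gives
\[
\det(A+\alpha{\rm J}) = \bigl(\sigma-\alpha - a^\top(\bar A+\alpha\bar I)^{-1}a\bigr)\det(\bar A+\alpha\bar I),
\]
and since the second factor is positive, $\det(A+\alpha{\rm J})>0$ holds if and only if the Schur complement $\sigma-\alpha-a^\top(\bar A+\alpha\bar I)^{-1}a$ is positive. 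That is precisely (iii)\,$\Leftrightarrow$\,(iv).

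For (iii)\,$\Rightarrow$\,(ii) I would invoke Sylvester's criterion, Lemma~\ref{eq:slemma}. Under (iii) the matrix $\bar A+\alpha\bar I$ is positive definite, so all its leading principal minors are positive; for $i\le n$ these coincide with the leading $i\times i$ minors $\det M_i$, which do not involve the last row and column. The remaining leading minor is $\det M_{n+1}=\det(A+\alpha{\rm J})>0$ by hypothesis. Hence all $n+1$ leading principal minors of $A+\alpha{\rm J}$ are positive, and Lemma~\ref{eq:slemma} implies $A+\alpha{\rm J}$ is positive definite, a fortiori positive semidefinite, which is (ii). For the converse (ii)\,$\Rightarrow$\,(iii) I would feed $M$ into Lemma~\ref{le:aspd}: positive definiteness of $M$ yields $\bar M=\bar A+\alpha\bar I$ positive definite and $\theta=\sigma-\alpha>0$; the former gives $\alpha>-\lambda_{\min}(\bar A)$ and the latter $\alpha<\sigma$, so $\alpha\in(-\lambda_{\min}(\bar A),\sigma)$, while $\det(A+\alpha{\rm J})>0$ because a positive definite matrix has positive determinant.

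I expect the genuine obstacle to sit in this last direction. Condition (ii) as stated asserts only that $A+\alpha{\rm J}$ is positive semidefinite, not that it is positive definite, whereas Lemma~\ref{le:aspd} and the determinant argument need the strict, definite version to produce the open interval and the strict inequality $\det>0$ in (iii). On the boundary $\alpha=-\lambda_{\min}(\bar A)$ or $\alpha=\sigma$ a semidefinite $A+\alpha{\rm J}$ may be singular (the block computation above even shows that (iii) characterizes positive \emph{definiteness} of $A+\alpha{\rm J}$). The delicate step, therefore, is either to read (ii) in the definite sense or to supply the additional argument that the block structure together with semidefiniteness already excludes these rank-deficient boundary configurations; this is the one point of the proof I would scrutinize most carefully.
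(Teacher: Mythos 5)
Your proof follows the paper's own argument essentially step by step: the same block identification of $A+\alpha{\rm J}$, Lemma~\ref{le:sscdet} (Schur complement determinant) for (iii)$\Leftrightarrow$(iv), Sylvester's criterion (Lemma~\ref{eq:slemma}) together with Lemma~\ref{le:aspd} for (ii)$\Leftrightarrow$(iii), and the implication (iii)$\Rightarrow$(i) of Theorem~\ref{th:pc} for the final claim. The point you single out for scrutiny is indeed a genuine defect, and it is present in the paper's proof as well: the paper applies Lemma~\ref{le:aspd}, which requires positive \emph{definiteness}, to a matrix that item (ii) only assumes positive \emph{semidefinite}, and likewise asserts $\det(A+\alpha{\rm J})>0$ from mere semidefiniteness. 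With the literal semidefinite reading the equivalence is false: for $A=0$ and $\alpha=0$ the matrix $A+\alpha{\rm J}=0$ is positive semidefinite, yet $(-\lambda_{\min}({\bar A}),\sigma)=(0,0)$ is empty and $\det(A+\alpha{\rm J})=0$, so (iii) fails while (ii) holds. As your block computation shows, (iii) and (iv) each characterize positive \emph{definiteness} of $A+\alpha{\rm J}$; the proposition becomes correct, and your proof complete, once (ii) is read (as the paper tacitly does) in the definite sense. Note that the weaker, semidefinite version of (ii) still implies (i) directly via Theorem~\ref{th:pc}, so the final claim of the proposition survives either reading.
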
 
\begin{proof}
We first prove the equivalence  between (ii) and (iii). Assume that (ii) holds.  Hence,  by applying    Lemmas~\ref{eq:posdef} and \ref{le:aspd}, and taking into account
  \eqref{eq:delt},  we have  $\det (A+ \alpha J)>0$, the matrix ${\bar
  A}+{\sigma}\overline{I}$  is  positive definite  and $ \sigma - \alpha>0$.
  Since  ${\bar A}+{\sigma}\overline{I}$  is   positive definite, we obtain that
  $\alpha> -\lambda_{\min}({\bar A})$. Therefore,  we conclude that  $\alpha \in
  ( -\lambda_{\min}({\bar A}),  \sigma)$ and $\det (A+ \alpha J)>0$. Hence, (iii)
  holds.  Reciprocally,    assume that   (iii) holds.  Thus, we have   $\alpha>
  -\lambda_{\min}({\bar A})$, $ \sigma > \alpha$ and  $\det(A +\alpha{\rm J})>0$.
  Hence,   the matrix ${\bar A}+{\sigma}{\bar I}$  is  positive definite,  $
  \sigma - \alpha>0$ and $\det(A +\alpha{\rm J})>0$. Therefore,  by using
  Lemmas~\ref{eq:posdef} and \ref{le:aspd} and the decomposition \eqref{eq:not},  we conclude that $A +\alpha{\rm J}$ is positive definite and  (ii) holds. 
  
Next,  we prove the equivalence  between (iii) and (iv).  First note that, for any  $\alpha\in (-\lambda_{\min}({\bar A}), \sigma)$,  we obtain that    $\lambda_{\min}({\bar A})+\alpha>0$. Thus,  the matrix ${\bar A}+\alpha {\bar I}$ is positive definite, which implies that $\det\left({\bar A}+\alpha{\bar I}\right)>0$. In particular,    the matrix ${\bar A}+\alpha {\bar I}$  is  invertible. Thus, applying  Lemma~\ref{le:sscdet}, we  have
		\begin{equation} \label{eq:itl}
			\det(A+\alpha J)=\left({\sigma}-\alpha-{a}^\top \left({\bar A}+\alpha{\bar I}\right)^{-1}{a}\right) \det\left({\bar A}+\alpha{\bar I}\right).
		\end{equation}
Since under the assumption $\alpha\in (-\lambda_{\min}({\bar A}), \sigma)$ we have  $\det\left({\bar A}+\alpha{\bar I}\right)>0$, it follows from \eqref{eq:itl}  that  $\det(A+\alpha J)>0$ is equivalent to ${\sigma}-\alpha-{a}^\top \left({\bar A}+\alpha{\bar I}\right)^{-1}{a}>0$.  Therefore, (iii) is equivalent to  (iv), and the proof of the  first statement of the proposition  is concluded.

By using the implication (iii)$\implies$(i) in Theorem~\ref{th:pc}, 
the last statement of the proposition follows from the first one.
\end{proof}
Consider   the decomposition \eqref{eq:not} of  a   symmetric matrix $A \in {\mathbb R}^{{(n+1)}\times {(n+1)}}$. Then,    it follows from Proposition~\ref{pr:tccf} that  we can decide if   $f(p):=p^\top Ap$  is  hyperbolically convex by solving the following optimization problem:
\begin{equation*}
            \inf \left\{ {\sigma}-\alpha-{a}^\top \left({\bar A}+\alpha{\bar I}\right)^{-1}{a}:~  \alpha\in (-\lambda_{\min}({\bar A}), \sigma) \right\}.             
\end{equation*}
 By using decompositions  \eqref{eq:dsv} and \eqref{eq:not},  Corollary \ref{cor:lhc}  can be stated equivalently in the following form.
\begin{corollary}\label{cor:det}
	Let $A=A^\top\in{\mathbb R}^{(n+1)\times(n+1)}$ and  $f:{\mathbb H}^n\to {\mathbb R}$ defined by
	$f(p)=p^\top Ap$. The function $f$ is hyperbolically convex if and 
	only if 
	\begin{equation}\label{eq:det}
		{\bar v}^\top{\bar A}{\bar v}+{\bar p}^\top{\bar A}{\bar p}
		+2v_{n+1}{\bar v}^\top {a}
		+2p^{n+1}{\bar p}^\top{a}+{\sigma}v_{n+1}^2
		+{\sigma}\left(p^{n+1}\right)^2\ge 0,
	\end{equation}
	for all $p,v\in{\mathbb R}^{n+1}$ with
	\begin{equation}\label{cond:det}
		{\bar p}^\top{\bar p}-\lf(p^{n+1}\rg)^2=-1
		\quad
		{\bar v}^\top{\bar v}-v_{n+1}^2=1,
		\quad 
		{\bar v}^\top{\bar p}-v_{n+1}p^{n+1}=0.
	\end{equation}
\end{corollary}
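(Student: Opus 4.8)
The plan is to obtain this corollary as a direct coordinate translation of Corollary~\ref{cor:lhc}, substituting the block decomposition \eqref{eq:dsv} for the vectors and \eqref{eq:not} for the matrix $A$ into each of the quantities appearing there. Since Corollary~\ref{cor:lhc} already characterizes hyperbolic convexity of $f(p)=p^\top Ap$ as the validity of $v^\top A v+p^\top A p\ge 0$ for all $p,v\in{\mathbb R}^{n+1}$ subject to $p^\top Jp=-1$, $v^\top Jv=1$, $p^\top Jv=0$, it suffices to rewrite both the three constraints and the target inequality in terms of the blocks $\bar p,\bar v,p_{n+1},v_{n+1}$ and $\bar A,a,\sigma$, and to verify that they become exactly \eqref{cond:det} and \eqref{eq:det}.

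First I would expand the three Lorentzian constraints. Recalling from \eqref{eq:dmip} that $J={\rm diag}(1,\dots,1,-1)$, for vectors written as in \eqref{eq:dsv} one has $x^\top J y=\bar x^\top\bar y-x_{n+1}y_{n+1}$, and in particular $x^\top J x=\bar x^\top\bar x-x_{n+1}^2$. Applying this to the pairs $(p,p)$, $(v,v)$ and $(p,v)$ turns $p^\top Jp=-1$, $v^\top Jv=1$ and $p^\top Jv=0$ into precisely the three equalities $\bar p^\top\bar p-p_{n+1}^2=-1$, $\bar v^\top\bar v-v_{n+1}^2=1$ and $\bar v^\top\bar p-v_{n+1}p_{n+1}=0$ listed in \eqref{cond:det}.

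Next I would expand the quadratic forms using \eqref{eq:not}. A direct computation of $v^\top A v$ with $A$ in block form gives $v^\top A v=\bar v^\top\bar A\bar v+2v_{n+1}\bar v^\top a+\sigma v_{n+1}^2$, where the two mixed contributions $v_{n+1}\bar v^\top a$ and $v_{n+1}a^\top\bar v$ merge into the single factor $2v_{n+1}\bar v^\top a$ precisely because $A$ is symmetric; the analogous identity holds for $p^\top A p$. Adding the two expansions reproduces exactly the left-hand side of \eqref{eq:det}, so the inequality of Corollary~\ref{cor:lhc} is equivalent to \eqref{eq:det} under the constraints \eqref{cond:det}, and the claim follows.

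The argument is entirely mechanical, so there is no genuine obstacle beyond careful bookkeeping of the off-diagonal terms; the only point that must not be overlooked is the use of the symmetry of $A$ to combine the two mixed terms into one, which is what produces the factors of $2$ in \eqref{eq:det}.
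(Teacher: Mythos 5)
Your proposal is correct and matches the paper's intent exactly: the paper states Corollary~\ref{cor:det} as a direct restatement of Corollary~\ref{cor:lhc} via the decompositions \eqref{eq:dsv} and \eqref{eq:not}, which is precisely the mechanical block expansion you carry out. Your verification of the constraint translation and the merging of the mixed terms $v_{n+1}\bar v^\top a$ and $v_{n+1}a^\top\bar v$ into $2v_{n+1}\bar v^\top a$ supplies the bookkeeping the paper leaves implicit.
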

\begin{example}
	Let $A:=\lf(a_{i}^{j}\rg)\in{\mathbb R}^{(n+1)\times(n+1)}$ and  $f:{\mathbb H}^n\to {\mathbb R}$ be defined by $f(p)=p^\top Ap$. If $f$ is hyperbolically convex, then 
	\begin{equation} \label{eq:tem}
	\frac{1}{n}\sum_{i, j=1}^n a_{i}^{j} + {\sigma}\geq 0.
	\end{equation}
Indeed, take   ${\bar v}=(1/\sqrt{n}, \ldots,1/\sqrt{n})\in {\mathbb R}^{n}$,  $v_{n+1}=0$ and ${\bar p}=0\in {\mathbb R}^{n}$,  $p^{n+1}=1$. Thus,  \eqref{eq:det}  becomes 
$$
{\bar v}^\top{\bar A}{\bar v}+{\bar p}^\top{\bar A}{\bar p}
		+2v_{n+1}{\bar v}^\top{a}
		+2p^{n+1}{\bar p}^\top{a}+{\sigma}v_{n+1}^2
		+{\sigma}\lf(p^{n+1}\rg)^2= \frac{1}{n}\sum_{i, j=1}^n a_{i}^{j} + {\sigma}.
$$
Since $p=({\bar p},p^{n+1})\in{\mathbb R}^{n+1}$, $v=({\bar v},v_{n+1})\in{\mathbb R}^{n+1}$ satisfy  \eqref{cond:det} and considering that  $f$ is hyperbolically convex, the  inequality  \eqref{eq:tem} follows from applying Corollary~\ref{cor:det}.
\end{example}
\begin{theorem}  \label{th:pch}
Let $A\in{\mathbb R}^{(n+1)\times(n+1)}$ and $f:{\mathbb H}^n\to{\mathbb R}$  be defined by $f(p)=p^\top Ap$. Then, considering  decompositions  \eqref{eq:dsv} and \eqref{eq:not}, the following statements hold: 
\begin{enumerate}
	\item[(i)] If $f$ is hyperbolically convex, then $\lambda_{\min}({\bar A})\ge -{\sigma}$;
	\item[(ii)] If ${\sigma}\geq -\lambda_{\min}({\bar A})$ and ${a}=0$,  then $f$ is hyperbolically convex;
	\item[(iii)] If $\sigma+\lambda_{min}({\bar A})> 2\sqrt{a^\top a}$,  then $f$ is hyperbolically convex.
\end{enumerate}
\end{theorem}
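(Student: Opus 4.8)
The plan is to treat all three items through the semidefinite reformulation furnished by Theorem~\ref{th:pc}, namely that $f$ is hyperbolically convex if and only if $A+\alpha{\rm J}$ is positive semidefinite for some $\alpha\in{\mathbb R}$, together with the block decomposition \eqref{eq:delt} of $A+\alpha{\rm J}$.

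For item~(i) I would start from hyperbolic convexity and invoke Theorem~\ref{th:pc} to obtain $\alpha\in{\mathbb R}$ with $A+\alpha{\rm J}$ positive semidefinite. Reading off \eqref{eq:delt}, the leading $n\times n$ principal block ${\bar A}+\alpha{\bar I}$ must then be positive semidefinite and the bottom-right entry ${\sigma}-\alpha$ must be nonnegative. These give $\alpha\geq -\lambda_{\min}({\bar A})$ and $\alpha\leq{\sigma}$; adding the two inequalities eliminates $\alpha$ and yields $\lambda_{\min}({\bar A})\geq -{\sigma}$. For item~(ii), with ${a}=0$ the matrix $A+\alpha{\rm J}$ in \eqref{eq:delt} is block diagonal, so it is positive semidefinite exactly when ${\bar A}+\alpha{\bar I}$ is positive semidefinite and ${\sigma}-\alpha\geq 0$, that is, when $\alpha\in[-\lambda_{\min}({\bar A}),{\sigma}]$. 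The hypothesis ${\sigma}\geq -\lambda_{\min}({\bar A})$ makes this interval nonempty, so choosing (say) $\alpha={\sigma}$ produces a positive semidefinite $A+\alpha{\rm J}$, and Theorem~\ref{th:pc} delivers hyperbolic convexity.

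Item~(iii) is the substantive one, and here I would lean on Proposition~\ref{pr:tccf}: it suffices to exhibit a single $\alpha\in(-\lambda_{\min}({\bar A}),{\sigma})$ for which the Schur-complement quantity ${\sigma}-\alpha-{a}^\top({\bar A}+\alpha{\bar I})^{-1}{a}$ is positive, since condition~(iv) there implies item~(i). Assuming ${a}\neq 0$ (the case ${a}=0$ being covered by item~(ii)), the key estimate is the eigenvalue bound ${a}^\top({\bar A}+\alpha{\bar I})^{-1}{a}\leq ({a}^\top{a})/(\lambda_{\min}({\bar A})+\alpha)$, valid whenever $\alpha>-\lambda_{\min}({\bar A})$, because ${\bar A}+\alpha{\bar I}$ is then positive definite with smallest eigenvalue $\lambda_{\min}({\bar A})+\alpha$. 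The optimal balance, dictated by the arithmetic--geometric mean inequality applied to the sum $(\lambda_{\min}({\bar A})+\alpha)+({a}^\top{a})/(\lambda_{\min}({\bar A})+\alpha)$, is the choice $\alpha=\sqrt{{a}^\top{a}}-\lambda_{\min}({\bar A})$, which makes the denominator equal to $\sqrt{{a}^\top{a}}$ and reduces the bound to $\sqrt{{a}^\top{a}}$.

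With this $\alpha$ one obtains ${\sigma}-\alpha-{a}^\top({\bar A}+\alpha{\bar I})^{-1}{a}\geq {\sigma}+\lambda_{\min}({\bar A})-2\sqrt{{a}^\top{a}}>0$ by hypothesis; the same hypothesis, since $\sqrt{{a}^\top{a}}>0$ and $2\sqrt{{a}^\top{a}}<{\sigma}+\lambda_{\min}({\bar A})$, also confines $\alpha$ to the open interval $(-\lambda_{\min}({\bar A}),{\sigma})$, so Proposition~\ref{pr:tccf} applies and $f$ is hyperbolically convex. The main obstacle is precisely identifying this optimal $\alpha$: the factor $2\sqrt{{a}^\top{a}}$ appearing in the hypothesis is exactly what the AM--GM extremum produces, so any looser choice of $\alpha$ would fail to reach the stated threshold.
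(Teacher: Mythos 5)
Your proof is correct, and it diverges from the paper's in two places worth noting. For item (i) the paper does not go through Theorem~\ref{th:pc} at all: it plugs the explicit test vectors $\bar p=0$, $p_{n+1}=1$, $v_{n+1}=0$, $\bar v^\top\bar v=1$ into the second-order characterization (Corollary~\ref{cor:det}) and reads off $\bar v^\top\bar A\bar v+\sigma\ge 0$ for all unit $\bar v$. You instead extract $\alpha$ with $A+\alpha{\rm J}$ positive semidefinite from Theorem~\ref{th:pc} and use the facts that a leading principal block of a positive semidefinite matrix is positive semidefinite and its diagonal entries are nonnegative; this is standard (restrict the quadratic form), but note it is the semidefinite analogue of Lemma~\ref{le:aspd}, which the paper states only for positive definite matrices, so you are quietly supplying that extension. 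The paper's version of (i) is more elementary and self-contained; yours is shorter once Theorem~\ref{th:pc} is available. Item (ii) is essentially the paper's proof (choose $\alpha=\sigma$, block-diagonal matrix, cite the semidefinite criterion). For item (iii) both arguments hinge on the same eigenvalue bound $a^\top(\bar A+\alpha\bar I)^{-1}a\le a^\top a/(\lambda_{\min}(\bar A)+\alpha)$, but the paper introduces the auxiliary quadratic $g(t)=(\sigma-t)(\lambda_{\min}(\bar A)+t)-a^\top a$, picks $\beta$ strictly between its two roots, and then passes through $\det(A+\beta{\rm J})>0$ via Lemma~\ref{le:sscdet} and Sylvester's criterion (Lemma~\ref{eq:slemma}) to conclude $A+\beta{\rm J}$ is positive definite before invoking Proposition~\ref{pr:tccf}; you instead name the AM--GM-optimal point $\alpha=\sqrt{a^\top a}-\lambda_{\min}(\bar A)$ and feed it directly into condition (iv) of Proposition~\ref{pr:tccf}, which is cleaner and avoids the determinant detour entirely. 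Your bookkeeping of the side conditions is also sound: the case $a=0$ is delegated to item (ii), $a\ne 0$ gives $\alpha>-\lambda_{\min}(\bar A)$, and the hypothesis $\sigma+\lambda_{\min}(\bar A)>2\sqrt{a^\top a}$ yields both $\alpha<\sigma$ and strict positivity of the Schur complement.
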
 
\begin{proof}
To prove (i), assume that $f$ is hyperbolically convex and  take any
$p,v\in{\mathbb R}^{n+1}$ with ${\bar p}=0$,  $p^{n+1}=1$, $v^{n+1}=0$, ${\bar
v}^\top{\bar v}=1$.  Then, conditions \eqref{cond:det} in Corollary~\ref{cor:det} are satisfied. Hence, it follows  from \eqref{eq:det} that ${\bar v}^\top{\bar A}{\bar v}\ge -{\sigma}$, for all ${\bar v}\in{\mathbb R}^n$ with  ${\bar v}^\top{\bar v}=1$. Therefore, the  result follows.

We proceed to prove (ii). Assume that $\lambda_{\min}({\bar A})\ge -{\sigma}$ and ${a}=0$. In this case,   we have   $\lambda_{\min}({\bar A}+{\sigma}\overline{I})\ge 0$, where ${\bar I}\in{\mathbb R}^{n\times n}$ is  the identity matrix. Hence, both matrices ${\bar A}+{\sigma}\overline{I}$ and
		\begin{equation*}
	        A+{\sigma}J=\left(\begin{matrix}{\bar A}+{\sigma}{\bar I} & 0\\
	      0 & 0\end{matrix}\right)
        \end{equation*}
		are positive semidefinite. Thus,  by applying Proposition
		\ref{pr:tccf} with $\alpha={\sigma}$, the proof of item (ii) follows.
		
To prove   item (iii),  	first we introduce the  auxiliary  quadratic polynomial $g:{\mathbb R}\to{\mathbb R}$ defined by
	\begin{equation*}
				g(t)=({\sigma}-t)\left({\lambda_{\min}({\bar A})}+t\right)-{a^\top}a.
        \end{equation*}
The roots of the quadratic polynomial $g$ are given by    
       \begin{equation*}
\mu={\frac{1}{2}\left(\sigma-{\lambda_{\min}({\bar A})}-\sqrt{\left[\sigma+{\lambda_{\min}({\bar A})}\right]^2-4{a^\top}a }\right)}, 
        \end{equation*}
         \begin{equation*}
\eta={\frac{1}{2}\left(\sigma-{\lambda_{\min}({\bar A})}+\sqrt{\left[\sigma+{\lambda_{\min}({\bar A})}\right]^2-4{a^\top}a }\right)}.
        \end{equation*}
Since $\sigma+\lambda_{min}({\bar A})> 2\sqrt{a^\top a}$  we have $\mu<\eta$.  Thus,  take  $\beta\in (\mu,\eta)$. Hence, $g(\beta)>0$. Since 
		\begin{equation*}
		\beta>\mu\ge{\frac{1}{2}\left(\sigma-{\lambda_{\min}({\bar A})}+\sqrt{\left[\sigma+{\lambda_{\min}({\bar A})}\right]^2}\right)} =-{\lambda_{\min}({\bar A})},
		\end{equation*} 
we obtain ${\lambda_{\min}({\bar A})}+\beta>0$. Thus, we conclude that  ${\bar A}+\beta {\bar I}$ is positive definite.  It follows that 
	  	\begin{align*}
		{\sigma}-\beta-{a}^\top \left({\bar A}+\beta{\bar I}\right)^{-1}{a}&\geq  {\sigma}-\beta-\lambda_{max}\left({\bar A}+\beta{\bar I}\right)^{-1}\,{a^\top}a\\
		                                                                                     &= {\sigma}-\beta-\frac{1}{{\lambda_{\min}({\bar A})}+\beta}\,{a^\top}a\\
		                                                                                     &=\frac{g(\beta)}{{\lambda_{\min}({\bar A})}+\beta}>0.
		\end{align*}
Therefore, by  using Lemma~\ref{le:sscdet}, it follows from the positive definiteness of the matrix ${\bar A}+\beta{\bar I}$ that 
		\begin{equation}\label{eq:l}
			\det(A+\beta J)=\left({\sigma}-\beta-{a}^\top \left({\bar A}+\beta{\bar I}\right)^{-1}{a}\right) \det\left({\bar A}+\beta{\bar I}\right)>0.
		\end{equation}
Since ${\bar A}+\beta{\bar I}$ is positive definite, combining
Lemma~\ref{eq:slemma} with  \eqref{eq:l}, we conclude that  $A+\beta J$ is positive definite.  Therefore,  Proposition \ref{pr:tccf} implies that  $f$ is hyperbolically convex, and the proof of  item~(iii) is concluded.

	\end{proof}
\begin{corollary}  \label{cr:a0}
Let $A\in{\mathbb R}^{(n+1)\times(n+1)}$ and $f:{\mathbb H}^n\to{\mathbb R}$  be defined by $f(p)=p^\top Ap$. Consider  the  decomposition  \eqref{eq:not} and assume that  ${a}=0$. Then,   $f$ is hyperbolically convex if and only if $\lambda_{\min}({\bar A})\ge -{\sigma}$.
\end{corollary}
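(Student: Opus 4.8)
The plan is to deduce this corollary directly from Theorem~\ref{th:pch}, since under the standing hypothesis $a=0$ both directions of the equivalence are already contained in that result. No new computation is needed; the work is entirely in matching the two implications to the correct items of the theorem.

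First I would dispatch the forward implication. Assuming $f$ is hyperbolically convex, item~(i) of Theorem~\ref{th:pch} applies verbatim and yields $\lambda_{\min}({\bar A})\ge -\sigma$. I would emphasize that this direction does not even use the hypothesis $a=0$: item~(i) holds for an arbitrary symmetric $A$, so the necessity of $\lambda_{\min}({\bar A})\ge -\sigma$ is unconditional.

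For the reverse implication, suppose $\lambda_{\min}({\bar A})\ge -\sigma$, which is the same inequality as $\sigma\ge -\lambda_{\min}({\bar A})$. Combined with the standing assumption $a=0$, this is exactly the pair of hypotheses appearing in item~(ii) of Theorem~\ref{th:pch}, whose conclusion is precisely that $f$ is hyperbolically convex. Chaining the two directions then gives the claimed equivalence.

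I do not anticipate any genuine obstacle, as the corollary is a straightforward specialization: the only point requiring a little care is that the two halves of the equivalence are supplied by two \emph{different} items of Theorem~\ref{th:pch} — item~(i) in full generality for necessity and the $a=0$ case of item~(ii) for sufficiency — rather than by one symmetric statement, so I would make that pairing explicit in the write-up.
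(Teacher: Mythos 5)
Your proposal is correct and matches the paper's own proof, which likewise derives the corollary as an immediate consequence of items~(i) and (ii) of Theorem~\ref{th:pch}. Your added observation that necessity via item~(i) does not require $a=0$ is accurate, just slightly more detail than the paper records.
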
 
\begin{proof}
The proof is an immediate consequence of items (i) and (ii) of Theorem~\ref{th:pch}.

\end{proof}
In the next proposition we present a characterization for the case ${a}\neq 0$ in 
\eqref{eq:not}, which completes  the result of Corollary~\ref{cr:a0}.
\begin{proposition} 
Let  $A \in {\mathbb R}^{{(n+1)}\times {(n+1)}}$ be a symmetric matrix and the decomposition \eqref{eq:not}. If ${a}\neq 0$, then the following statements are
equivalent: 
\begin{enumerate}
\item[(i)] The quadratic  function   $f: {{\mathbb H}^n} \to {\mathbb R} $ defined by $f(p):=p^\top Ap$  is  hyperbolically convex; 
\item[(ii)] There exists $\alpha\in {\mathbb R}$ such that the matrix $A +\alpha{\rm J}$ is  positive semidefinite; 
\item[(iii)] There exists $\alpha\in {\mathbb R}$ such that $\sigma>\alpha$ and the matrix 
	${\bar A}+\alpha {\bar I} -\frac{1}{{\sigma}-\alpha}aa^\top$ is positive 
	semidefinite;
\end{enumerate}
\end{proposition} 
\begin{proof}
	First we prove that (ii) and (iii) are equivalent.
 Since the matrix $A +\alpha{\rm J}$ is positive semidefinite, by using the decompositions  \eqref{eq:delt} and \cite[Corollary 7.15, p. 398]{HornJohnson1990},  we conclude   that 
\begin{equation} \label{eq:pdif}
({\bar a}_{ii}+\alpha)(\sigma-\alpha)\geq a_i^2, \qquad i=1, \ldots, n, 
\end{equation}
where ${\bar a}_{ii}$ is the $ii$-entry of the matrix ${\bar A}$ and $a_i$ is the  $i$-entry of the vector $a$. Considering that ${a}\neq 0$, and all elements in the diagonal of a positive semidefinite matrix  are  nonnegative, it follows from \eqref{eq:pdif} that $\sigma>\alpha$. Thus, applying item (iii) of Lemma~\ref{le:aspd} we conclude  that items  (ii) and (iii) are equivalent.  
The equivalence of (i) and (ii) follows from the equivalence of  (iii) and (i) in Theorem~\ref{th:pc}.

\end{proof}
\begin{proposition} 
Let  $A \in {\mathbb R}^{{(n+1)}\times {(n+1)}}$ be a symmetric matrix and the decomposition \eqref{eq:not}.  Consider the following statements: 
\begin{enumerate}
\item[(i)] The quadratic  function   $f: {{\mathbb H}^n} \to {\mathbb R} $ defined by $f(p):=p^\top Ap$  is  hyperbolically convex; 
\item[(ii)] There exists $\alpha\in {\mathbb R}$ such that ${\bar
	A}+\alpha {\bar I}$ is positive definite and $A +\alpha{\rm J}$ is  positive 
	semidefinite; 
\item[(iii)] There exists $\alpha\in {\mathbb R}$ such that ${\bar
	A}+\alpha {\bar I}$ is positive definite and
	${\sigma}-\alpha-a^\top({\bar A}+\alpha {\bar I} )^{-1}a\geq 0$.
\end{enumerate}
Then, items (ii) and (iii) are equivalent and any of them implies (i). 
\end{proposition} 
\begin{proof}
It follows from item  (iii)  of Lemma~\ref{le:aspd} that   items   (ii) and  (iii) are equivalent. The equivalence of (i) and (ii) follows from the equivalence of  (iii) and (i) in Theorem~\ref{th:pc}.

\end{proof}
\begin{proposition}
Let  $A \in {\mathbb R}^{{(n+1)}\times {(n+1)}}$ be a symmetric matrix and the decomposition \eqref{eq:not}.  Consider the   following statements: 
\begin{enumerate}
\item[(i)] The quadratic  function   $f: {{\mathbb H}^n} \to {\mathbb R} $ defined by $f(p):=p^\top Ap$  is  hyperbolically convex; 
\item[(ii)] There exists $\alpha\in {\mathbb R}$ such that the matrix $A +\alpha{\rm J}$ is positive definite; 
\item[(iii)] There exists $\alpha\in {\mathbb R}$ such that $\sigma>\alpha$ and the matrix ${\bar A}+\alpha {\bar I} -\frac{1}{{\sigma}-\alpha}aa^\top$ is positive definite.
\end{enumerate}
Then, items    (ii) and (iii)  are equivalent and any of them implies item~(i).
\end{proposition} 
\begin{proof}
The equivalence between items   (ii)   and  (iii) follows by direct application of item  (i) of Lemma~\ref{le:aspd}.  
By using the implication (iii)$\implies$(i) in Theorem~\ref{th:pc}, 
the last statement of the proposition follows from the first one.

\end{proof}
\section{Optimization Concepts on the Hyperbolic Space} \label{sec:fr2}
In this section we present some concepts of optimization related to
hyperbolically convex function. In order to do that, consider a  differentiable
function $f:{\mathbb H}^n \to {\mathbb R}$  and the following  unconstrained 
optimization problem
\begin{equation} \label{eq:uncop}
\quad \displaystyle {\rm Minimize}_{p\in {\mathbb H}^n} f(p).
\end{equation}
It follows from \eqref{eq:grad} that a necessary optimality condition for the unconstrained  
problem~\eqref{eq:uncop} is:
\begin{equation} \label{eq:opup}
\left[{\rm I}+pp^{\top}{\rm J} \right] {\rm J}\cdot Df(p)=0.
\end{equation}
\begin{remark}
If $f$ is  a hyperbolically  convex function, then Proposition~\ref{pr:cfocf} implies that 
all points satisfying \eqref{eq:opup} are global solutions of problem~\eqref{eq:uncop}, i.e., \eqref{eq:opup}  is also  a sufficient  optimality condition. 
\end{remark}
Let ${\cal C} \subset {\mathbb H}^n$ be a hyperbolically convex set and consider the constrained optimization  problem:
\begin{equation} \label{eq:op}
\quad \displaystyle {\rm Minimize}_{p\in {\cal C}} f(p).
\end{equation}
In the following proposition we state the necessary optimality condition for the  problem~\eqref{eq:op}.
\begin{proposition} \label{pro:ncvi}
It  the point  \(\bar p\in {\cal C}\)  is a solution of problem \eqref{eq:op}, then
$$
 \Big\langle  [{\rm I}+{\bar p}{\bar p}^{\top}{\rm J}] {\rm J}\cdot Df (\bar p),p \Big\rangle \geq 0, \quad \forall ~p\in {\cal C}, 
$$
where $Df(\bar p) \in {\mathbb R}^{n+1}$ is the usual gradient of  $ f$ at $\bar p$. 
\end{proposition}
\begin{proof}
Take  \( p\in {\cal C}\) and let \(\bar p\in {\cal C}\) be a solution to  \eqref{eq:op}.  Let $ [0, 1]\ni t \mapsto  \gamma_{\bar p p}(t)= \exp_{\bar p} (t \,\log_{\bar p}p),$
be the geodesic from \(\bar p\) to \(p\). Since \({\cal C}\) is hyperbolically convex and \(  p,\bar p\in {\cal C}\), we 
conclude that \( \gamma_{\bar p p}(t)\in {\cal C}\) for all \(t\in [0, 1]\). 
Hence,  as \(\bar p\in {\cal C}\) is a solution to the problem in \eqref{eq:op},
we have $(f(\gamma_{\bar p p}(t))-f(\bar p))/t\geq 0$,  for all $t\in [0, 1].$
Thus, taking the limit when $t$ tends to zero, we obtain, by using
\eqref{eq:cr1} and  $\gamma_{\bar p p}'(0)= \log_{\bar p}p$, that
$\langle \grad f (\bar p), \log_{\bar p}p\rangle \geq 0.$  Therefore,
the result follows from  \eqref{eq:expinv} and \eqref{eq:grad}, by taking into account that  
${\rm arcosh}(-\langle \bar p, p\rangle)\geq 0 $.  

\end{proof}
\begin{proposition} \label{pro:cvi}
Let  \( f\) be a hyperbolically  convex function in \({\cal C}\).  The point \(\bar p\in {\cal C}\)  is a solution of  the problem in \eqref{eq:op} if and only if
$$
 \Big\langle [{\rm I}+{\bar p}{\bar p}^{\top}{\rm J}] {\rm J}\cdot Df (\bar p), p \Big\rangle \geq 0, \quad \forall ~p\in {\cal C},
$$
where $Df(\bar p) \in {\mathbb R}^{n+1}$ is the usual gradient of  $ f$ at $\bar p$. 
\end{proposition}
\begin{proof}
If  the point \(\bar p\in {\cal C}\)  is a solution of \eqref{eq:op}, then the inequality 
follows from Proposition~\ref{pro:ncvi}. Conversely,  take  $p, \,\bar p\in {\cal C}\), \( p\neq \bar p$ 
and assume that $ \big\langle [{\rm I}+{\bar p}{\bar p}^{\top}{\rm J}]{\rm J}\cdot Df (\bar p),  p \big\rangle\geq 0 $. 
As \(f\) is hyperbolically convex  in \({\cal C}\), we conclude from Proposition~\ref{pr:cfocf} that
\[
f(p)\geq f(\bar p)+ \frac{{\rm arcosh}(-\langle \bar p, p\rangle)}{\sqrt{1-\langle \bar p, p\rangle ^2}} 
 \Big\langle [{\rm I}+{\bar p}{\bar p}^{\top}{\rm J}]{\rm J}\cdot Df (\bar p),  p \Big\rangle, \qquad \forall \, p\in {\cal C}, \; p\neq \bar p.
\]
Since $ \big\langle  [{\rm I}+{\bar p}{\bar p}^{\top}{\rm J}] {\rm J}\cdot Df (\bar p),
p \big\rangle\geq 0 $ and ${\rm arcosh}(-\langle \bar p, p\rangle)\geq 0$, the  latter inequality  implies that \(f(p)\geq f(\bar p)\), for all  \(p\in {\cal C}\). Therefore, \(\bar p\)  is a global solution of the problem in \eqref{eq:op}.  

\end{proof}
Next  we present an equivalent  form for  Proposition~\ref{pr:conv}, whose proof  follows by combining  Proposition~\ref{pr:sgmd2}  with  Lemma~\ref{le:csqd},  Proposition~\ref{pro:cvi} and \eqref{eq:ddf},
\begin{corollary} \label{cr:pcha}
Let ${\cal C\subseteq} {{\mathbb H}^n}$  be a closed hyperbolically convex set and  ${\bar p}\in {{\mathbb H}^n}$.  Consider  the function ${{\mathbb H}^n}\ni p\mapsto \rho_{\bar p}(p):= \frac{1}{2}d^2_{\bar p}(p)$  defined in \eqref{eq:sqdist}. Then, 
${\cal P_{\cal C}}({\bar p})= \arg\min_{p\in {\cal C}} \rho_{\bar p}(p) $ if and only if 
 $$
 \left\langle \left({\rm I}+{\cal P_{\cal C}}({\bar p}) {\cal P_{\cal C}}({\bar p})^\top{\rm J}\right){\bar p},  p\right\rangle \leq 0, \quad  \qquad  ~ \forall  p\in {\cal C}.
 $$
\end{corollary}
For $f:{\mathbb H}^n \to {\mathbb R}$ and $g_i:{\mathbb H}^n \to {\mathbb R}^m$, $i=1, \ldots, m$ differentiable hyperbolically convex  
functions and 
$$
{\cal C} =\{p\in {\mathbb R}^n: g_i(p)\leq 0, i=1, \ldots, m\} 
$$
a hyperbolically convex set, see Remark~\ref{re:pohcf}. Consider the following particular instance of the hyperbolically convex
optimization problem~\eqref{eq:op}:
\begin{equation} \label{eq:opc}
\quad \displaystyle {\rm Minimize}_{x\in {\cal C}} f(x).
\end{equation}
\begin{proposition} \label{pro:opt} 
Suppose that ${\bar p} \in {\cal C} $ and  there exists $\mu=(\mu_{1}, \ldots,\mu_{m})\in {\mathbb R}^m_{+}$ such that
\begin{equation} \label{kkts}
\left[{\rm I}+{\bar p}{{\bar p}}^{\top}{\rm J}\right]\Big[{\rm J}\cdot Df({\bar p})+\sum_{i=1}^{m}\mu_{i}{\rm J}\cdot Dg_i({\bar p})\Big]=0,   \qquad  \sum_{i=1}^{m}\mu_{i}g_{i}({\bar p})=0, 
\end{equation}
where $Df({\bar p}) \in {\mathbb R}^{n+1}$ is the usual gradient of  $ f$ at ${\bar p}$. Then ${\bar p}$ is a solution of the problem~\eqref{eq:opc}.
\end{proposition}
\begin{proof}
Since  $f, g_{i}:{\mathbb H}^n \to {\mathbb R}$ are  hyperbolically convex
functions and $\mu_i\ge 0$, for $i=1, \ldots, m$, it follows that $h:{\mathbb H}^n \to {\mathbb R}$ defined by 
$$
h(p)=f(p)+\sum_{i=1}^{m}\mu_{i}g_{i}(p)
$$
 is hyperbolically convex. Moreover, $f(p)\geq h(p)$, for all $p\in {\cal C}$. Now,  from the  first equality in \eqref{kkts} we obtain that 
 $\left[{\rm I}+pp^{\top}{\rm J} \right] {\rm J}\cdot Dh({\bar p})=0$. Thus, since $h$
 is hyperbolically convex, we can apply Proposition~\ref{pro:cvi} with $f=h$ and
 ${\cal C}={\mathbb R}^{n+1}$ to conclude that  ${\bar p}$ is a minimizer of $h$
 in ${\mathbb R}^{n+1}$.  Hence, from $f(p)\geq h(p)$, for all $p\in {\cal C}$
 and the second equality in \eqref{kkts}, we have  
$
f(p)\geq h(p)\geq h({\bar p})=f({\bar p}),
$
for all $p\in {\cal C} $. Since ${\bar p} \in {\cal C} $, the last inequality implies that  it is  a solution  of \eqref{eq:opc}.

\end{proof}
\begin{remark}
If ${\bar p}$ is a solution of the problem~\eqref{eq:opc}, then under mild conditions on the problem~\eqref{eq:opc} the point   ${\bar p}$ satisfies \eqref{kkts}, for details  see  \cite[Section~9]{Udriste1994}; see also \cite[Theorem 4.4]{YangZhangRuyi2014}.
\end{remark}
\section{Convexity on Others Model of Hyperbolic Geometry} \label{sec:gm}
There  are several models of hyperbolic geometry, the  four commonly used   ones    are  {\it the Klein model},    {\it the  Poincar\'e disk model}, {\it the Poincar\'e half-plane model}  and   {\it the Lorentz or hyperboloid model}, see for example  \cite{BenedettiPetronio1992, Cannon1997, Ratcliffe2019}.  Among them, we have chosen the hyperboloid model, because  it has several similarities  with the Euclidean sphere. It is worth noting that we can choose any of the aforementioned models. Let  us recall the general concept of an  isometry. 
\begin{definition} \label{def:iso}
Let  $(\mathcal{N},\langle\!\!\langle \cdot ~, ~ \cdot \rangle\!\!\rangle)$ and $(\mathcal{M},  \langle \cdot ~, ~ \cdot \rangle)$ be   Riemannian manifolds. A mapping  $\Phi : \mathcal{N} \to \mathcal{M}$ is called  an  isometry,  if $\Phi$  is continuously differentiable, and for all $q \in \mathcal{N}$ and $u, v \in T_q\mathcal{N}$, we have 
$
\langle\!\!\langle u , v \rangle\!\!\rangle =\langle d\Phi_q u , d\Phi_q v \!\rangle, 
$
 where $d\Phi_q : T_{q}\mathcal{N} \to T_{\Phi(q)}\mathcal{M}$ is the differential of $\Phi$ at $q\in \mathcal{N}$.  
 \end{definition}  The next result is an important property of isometries, its prove is in \cite[Proposition 5.6.1, p. 196]{Petersen2016}.
\begin{proposition}\label{prop:geod.iso} 
Let $\mathcal{N}$, $\mathcal{M}$ be Riemannian manifold and  $\Phi : \mathcal{N} \to \mathcal{M}$  an isometry. If $\gamma$ is a geodesic in ${\mathcal N}$, then $\Phi\circ\gamma$ is a geodesic in ${\mathcal M}$. Moreover, $\Phi$ preserve the Riemannian distance.  
\end{proposition}
Straight combination of Definition~\ref{def:iso}  with  Proposition~\ref{prop:geod.iso} give us the following result.
\begin{theorem}\label{prop:iso.conv.fun}
Let $\mathcal{N}$, $\mathcal{M}$ be Riemannian manifolds and the function   $\Phi : \mathcal{N} \to \mathcal{M}$  be an isometry. The function $g:{\mathcal M}\to {\mathbb R}$  is convex if and only if $f:\mathcal{N} \to {\mathbb R}$ defined by
$f(p)=(g\circ\Phi)(p)$ is convex.
\end{theorem}
It is well known that  the Klein model,   the  Poincar\'e disk model,  the Poincar\'e half-plane model and   the hyperboloid model are isometric to each other,  see       isometries between  them  in \cite[Chapter A]{BenedettiPetronio1992}.   Therefore, it follows from Theorem~\ref{prop:iso.conv.fun} that the concepts of convexity and consequently the results studied in the previous sections have via isometries  their counterparts  in any  model isometric to the  hyperboloid model.
\section{Final Remarks} \label{sec:fr}
This paper is inspired by the papers \cite{FerreiraIusemNemeth2014,
FerreiraNemeth2019}, where we studied some intrinsic properties of the
spherically  convex functions and  spherically  quadratic functions,
respectively. Despite some of our ideas being similar with the ideas in the
aforementioned papers, the convex functions in hyperbolic spaces turned out to have a 
completely different structure. For example there is no constant globally convex
function on the whole sphere, but there are many such functions on the hyperbolic
space, as shown in Section~{5.2}. A related remark is that the class of convex functions on  
constant curvature manifolds is widening with the decrease of the sign of the curvature. 
We also expect that the class of convex functions on a proper convex subset of the 
hyperbolic space to be much more wider than the convex functions on the corresponding proper 
convex subset of the sphere. This property has already been established for
corresponding intersections of the sphere and hyperbolic space with the positive
orthant \cite{FerreiraNemeth2019}, but for other cones it needs to be 
investigated.  Although several applications of optimization in hyperbolic
spaces have emerged, a comprehensive study from this point of view of these
spaces is still 
lacking. The results of this paper are the first step in this direction. We 
foresee significant progress in this topic in the nearby future.

Finally,  let us present some  basics formulas similar to the ones in
Section~\ref{sec:int.1}  for  an   $n$-dimensional  hyperbolic space  with
constant negative curvature $-K<0$. For that,   let us rescale the Lorentzian inner product  \eqref{eq:ip}  as follows: Let $K>0$ and  $\langle\cdot , \cdot \rangle_{K}$ be the {\it $K$-Lorentzian inner product}  of  $x:=(x_1, \ldots,x_n, x_{n+1})^{\top} $ and  $y:=(y_1, \ldots, y_n,y_{n+1})^{\top}$ on ${{\mathbb R}^{n+1}}$ defined  by 
\begin{equation} \label{eq:ipg}
{\langle} x, y{\rangle_{K}}:= Kx_{1}y_{1}+ \cdots + Kx_{n}y_{n}-Kx_{n+1}y_{n+1}.
\end{equation}
Note that ${\langle} x, y{\rangle_{K}}={K}{\langle} x, y{\rangle}$. For each  $x\in {{\mathbb R}^{n+1}}$,  the {\it ${K}$-Lorentzian norm (length)} of $x$ is  the complex number
\begin{equation*} 
\|x\|_{K}:= \sqrt{{\langle} x, x{\rangle_{K}}}.
\end{equation*}
In order to state the inner product \eqref{eq:ipg} in a convenient form, we  take the diagonal matrix ${\rm J}_{K}$ 
defined  by
\begin{equation} \label{eq:dmipg}
{\rm J}_{K}:={\rm diag}({K}, \ldots,{K}, -{K}) \in {\mathbb R}^{(n+1)\times (n+1)}.
\end{equation}
By using \eqref{eq:dmipg},   the  Lorentz inner product  \eqref{eq:ipg}  can be
written equivalently  as follows 
\begin{equation*}
{\langle} x, y{\rangle}_{K}:=x^{\top}{\rm J}_{K} y, \qquad \forall x, y\in  {{\mathbb R}^{n+1}}.
\end{equation*}
By considering  the ${K}$-Lorentzian inner product. $\langle\cdot , \cdot
\rangle_{K}$ defined in \eqref{eq:ipg},  we define  the {\it   $n$-dimensional  ${K}$-hyperbolic space}  as follows 
\begin{equation*}
{\mathbb H}^{n}_{K}:=\left\{ p\in {{\mathbb R}^{n+1}}:~\langle p, p\rangle_{K}
=-1, ~ p^{n+1}>0\right\}, \qquad {K}>0.
\end{equation*}
It is worth noting that the  $n$-dimensional  ${K}$-hyperbolic space ${\mathbb
H}^{n}_{K}$ can also be written as follows 
\begin{equation} \label{eq:hsg2}
{\mathbb H}^{n}_{K}:=\left\{ p\in {{\mathbb R}^{n+1}}:~\langle p, p\rangle
=-\frac{1}{K}, ~ p^{n+1}>0\right\}, \qquad {K}>0.
\end{equation}
We know that ${\mathbb H}^{n}_{K}$ has sectional curvature $-K$.  It follows from  \eqref{eq:hsg2} that ${\mathbb H}^{n}_{1}$ is the  $n$-dimensional hyperbolic space ${\mathbb H}^{n}$. The tangent plane  of ${\mathbb H}^{n}_{K}$ at a point  $p\in {\mathbb H}^{n}_{K}$ is given by 
\begin{equation*} 
T_{p}{{\mathbb H}^n_{K}}:=\left\{v\in {{\mathbb R}^{n+1}}\, :\, \langle p, v \rangle_{K}=0\right\}.
\end{equation*}
The  {\it intrinsic distance} on the  ${K}$-hyperbolic space ${\mathbb H}^{n}_{K}$ between two  points $p, q \in {{\mathbb H}^n_{K}}$  is  given  by
\begin{equation*}
d^{K}(p, q):=\frac{1}{\sqrt{K}}{\rm arcosh} (-\langle p , q\rangle_{K}).
\end{equation*}
If $ p, q\in {{\mathbb H}^n}$ and  $q\neq p$, then the   unique {\it geodesic segment  from $p$ to $q$ } is   given by 
\begin{equation*}
\gamma^{K}_{pq}(t)= \left( \cosh({\sqrt{K}} t) + \frac{\langle p, q\rangle_{K} \sinh
({\sqrt{K}}t)}{\sqrt{\langle p, q\rangle_{K}^2-1}}\right) p 
+ \frac{\sinh({\sqrt{K}} t)}{\sqrt{\langle p, q\rangle_{K}^2-1}}\;q, \qquad \forall t\in
\lf[0, \;d^{K}(p,q)\rg].
\end{equation*}
The {\it exponential mapping} $\exp^{K}_{p}:T_{p}{{\mathbb H}^n_{K}} \rightarrow {{\mathbb H}^n_{K}} $ at a points $p \in {{\mathbb H}^n_{K}}$ is given by 
\begin{equation*} 
\exp^{K}_{p}v:= \displaystyle \cosh(\|v\|_{K}) \,p+ \sinh(\|v\|_{K})\, \frac{v}{\|v\|_{K}},
\qquad  \forall v\in T_p{{\mathbb H}^n_{K}}\setminus\{0\}.
\end{equation*} 
If  $\gamma^{K} _{v}$ is the geodesic defined by its  initial position $p$, with
velocity $v$ at $p$, then  $\gamma^{K} _{v}(t)= \exp^{K}_{p}tv$. The {\it inverse
of the exponential mapping}  is given by $\log_{p}^{K}q=0$, for $q=p$, and 
\begin{equation} \label{eq:expinvg}
\log_{p}^{K}q:=  \displaystyle \frac{1}{\sqrt{K}} {\rm arcosh}(-\langle p, q\rangle_{K}) \frac{1}{\sqrt{\langle p, q\rangle_{K}^2-1}} 
\left[{\rm I}+pp^\top{\rm J}_{K}\right]q,  \qquad  q\neq p.
\end{equation}
It follows from \eqref{eq:Intdist} and \eqref{eq:expinv} that $d_{K}(p,
q)=\|\log_{q}^{K}p\|_{K}$, for all $p, q \in {{\mathbb H}^n_{K}}$. The explicitly formula of parallel transport $P^{K}_{pq}$ is given by 
\begin{equation*}
{P}^{K}_{pq}(v):= v-\frac{\langle v, \log_{q}^{K}p \rangle_{K}}{{\rm arcosh}^2 (-\langle p , q\rangle_{K})}\left(\log_{q}^{K}p+ \log_{p}^{K}q \right)=\left[{\rm I}+\frac{1}{1-\langle p, q\rangle_{K}}(p+q)q^\top {\rm J}_{K}\right]v.
\end{equation*}
 By rescaling the   Lorentzian inner product  \eqref{eq:ip}
 to \eqref{eq:ipg},  we can obtain similar results to the previous sections. It
 follows from a rescaled version of Theorem \ref{th:pc} that if a quadratic function is 
 $K_0$-hyperbolic convex for a $K_0>0$, then it is $K$-hyperbolic convex with respect to all 
 $K>0$ (where $K$-hyperbolic convexity in ${{\mathbb H}^n_{K}}$ can be defined
 similarly to hyperbolic convexity in ${{\mathbb H}^n})$. 
 However, the only $K$-hyperbolocally convex quadratic functions that remain convex when 
 $K\to 0$ are the ones with $A$ positive semidefinite. It is interesting to
 study a similar question for more general functions. 

\subsection*{Acknowledgements}
The authors are grateful to Andr\'as R\'acz for suggesting the topic of convexity of sets and functions on hyperbolic spaces. They would also like to thank the reviewers for their remarks and suggestions which contributed to the improvement of the paper.

\end{document}